\documentclass[sn-mathphys,Numbered]{sn-jnl}


\usepackage{graphicx}%
\usepackage{multirow}%
\usepackage{amsmath,amssymb,amsfonts}%
\usepackage{amsthm}%
\usepackage{mathrsfs}%
\usepackage[title]{appendix}%
\usepackage{xcolor}%
\usepackage{textcomp}%
\usepackage{manyfoot}%
\usepackage{booktabs}%
\usepackage{algorithm}%
\usepackage{algorithmicx}%
\usepackage{algpseudocode}%
\usepackage{listings}%

\usepackage[T1]{fontenc}

\usepackage{mwe} 
\usepackage{caption}
\usepackage{amsfonts,mathrsfs}
\usepackage{amsmath, amsthm, amssymb}

\usepackage{float}
\usepackage{subfigure}
\newfloat{figure}{H}{lof}
\newfloat{table}{H}{lot}
\floatname{figure}{\figurename}
\floatname{table}{\tablename}

\usepackage{mathtools}
\usepackage{etoolbox}

\usepackage{enumerate}

\usepackage{xcolor}
\usepackage{tcolorbox}
\usepackage{colortbl}
\usepackage{bbm}
\usepackage{nicematrix}

\usepackage[latin1]{inputenc}

%

\usepackage{float}
\newfloat{figure}{H}{lof}
\newfloat{table}{H}{lot}
\floatname{figure}{\figurename}
\floatname{table}{\tablename}

\numberwithin{equation}{section}

\theoremstyle{thmstyleone}
\newtheorem{theorem}{Theorem}[section]

\newtheorem{assumption}[theorem]{Assumption}

\newtheorem{corollary}[theorem]{Corollary}

\newtheorem{definition}[theorem]{Definition}

\newtheorem{lemma}[theorem]{Lemma}

\newtheorem{proposition}[theorem]{Proposition}
\newtheorem{remark}[theorem]{Remark}

\numberwithin{equation}{section}

\newcommand{\dx}{\mathrm{d} x}

\newcommand{\dt}{\mathrm{d} t}

\renewcommand{\d}{\mathrm{d}}

\newcommand{\R}{\mathbb{R}}
\newcommand{\C}{\mathbb{C}}

\newcommand{\Z}{\mathbb{Z}}

\newcommand{\Li}{\mathrm{L}}
\renewcommand{\L}{\mathcal{L}}

\renewcommand{\d}{{\rm \, d}}
\renewcommand{\Re}{{\rm Re \, }}
\newcommand{\Ra}{{\rm R \, }}
\newcommand{\Nu}{{\rm N\, }}

\raggedbottom

\begin{document}

\title[\textbf{A return-to-home model with commuting people and workers}]{\textbf{A return-to-home model with commuting people and workers}}


\author{Pierre Magal}

%

\affil{Univ. Bordeaux, IMB, UMR 5251, F-33400 Talence, France}

%


\abstract{	This article proposes a new model to describe human intra-city mobility. The goal is to combine the convection-diffusion equation to describe commuting people's movement and the density of individuals at home. We propose a new model extending our previous work with a compartment of office workers. To understand such a model, we use semi-group theory and obtain a convergence result of the solutions to an equilibrium distribution. We conclude this article by presenting some numerical simulations of the model.  }

\keywords{Return-to-home model, Intra-City-Mobility, Diffusion convection equation}



\maketitle

\section{Introduction}
Understanding human intra-city displacement is crucial since it influences populations' dynamics. Human mobility is essential to understand and quantifying social behavior changes.  In light of the recent COVID-19 epidemic outbreak, human travel is critical to know how a virus spreads at the scale of a city, a country, and the scale of the earth, see \cite{Ruan, Ruan2017}. 

\medskip 
We can classify human movement into: 1) short-distance movement: working, shopping, and other intra-city activities; 2) long-distance movement: intercity travels, planes, trains, cars, etc. These considerations have been developed recently in \cite{BHG,GHB, KSZ, MS}. A global description of the human movement has been proposed (by extending the idea of the Brownian motion) by considering the Lévy flight process. The long-distance movement can also be covered using patch models (see Cosner et al.	\cite{CBCI} for more results). 

\medskip 
The spatial motion of populations is sometimes modeled using Brownian motion and diffusion equations. For instance, reaction-diffusion equations are widely used to model the spatial invasion of populations both in ecology and epidemiology. We refer, for example, to Cantrell and Cosner \cite{Cantrell}, Cantrell, Cosner, and Ruan \cite{Cantrell-Cosner-Ruan}, Murray \cite{Murray}, Perthame \cite{Perthame}, Roques \cite{Roques} and the references therein. In particular, the spatial propagation for the solutions of reaction-diffusion equations has been observed and studied in the 30s by Fisher \cite{F} and Kolmogorov, Petrovski, and Piskunov \cite{KPP}. Diffusion is a good representation of the process of invasion or colonization for humans and animals. Nevertheless, once the population is established, the return-to-home process (i.e., diffusion-convection combined with return-to-home) seems to be more suitable for describing the movement of human daily life.

\medskip 
A good model for intra-city mobility should also incorporate population density in the city. Figure \ref{Fig1} represents the evolution of the population density in Tokyo. This type of problem have been consider by geographer long ago, and we refer to the book of  \cite{Pumain} for nice overview on this topic.

\begin{figure}[H]
	\begin{center}
		\hspace{-0.9cm}	\includegraphics[scale=0.5]{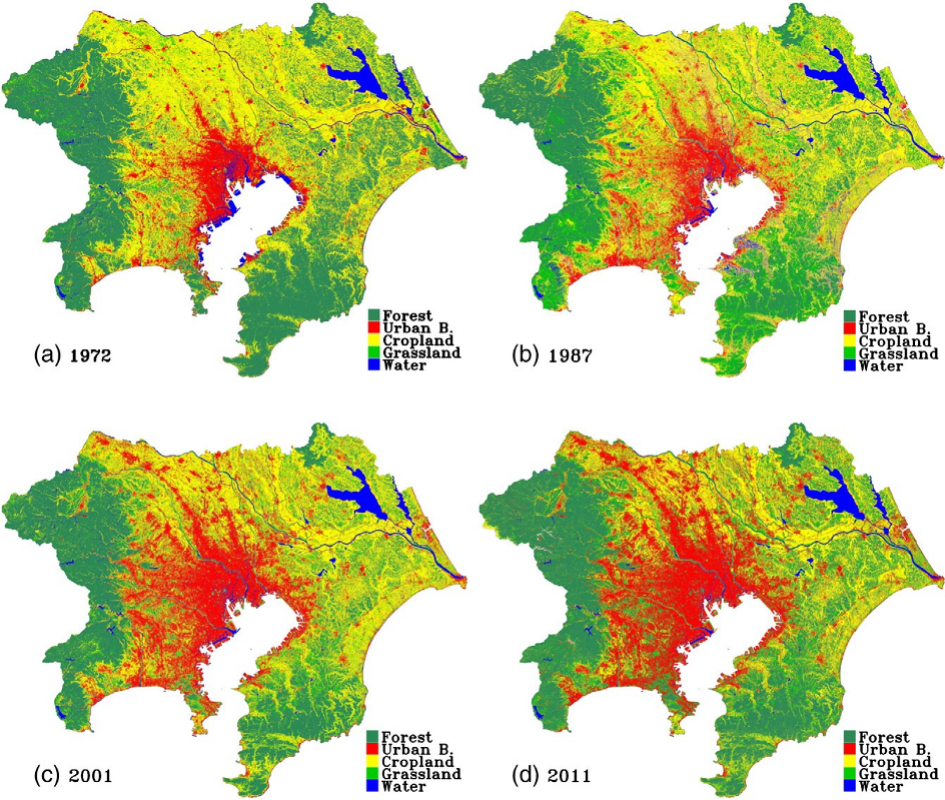} \hspace{1cm}
	\end{center}
	\caption{\textit{The above figure represents the evolution of the density of individuals (at home) in Tokyo city. This figure is taken from \cite{BY}.}}\label{Fig1}
\end{figure}

\medskip 
Ducrot and Magal \cite{DM-2022} previously proposed a model with return-to-home with two classes of people, the travelers and the people at home. The present article aims to improve this previous model by introducing a third compartment composed of immobile individuals composed mostly of office workers.

\medskip 
In other words, we are trying to model commuting people in a city. This process combines several aspects; some are summarized in Figure \ref{Fig2}. In \cite{CPB}, a patches model was proposed to describe commuting people. To our best knowledge, our approach using partial differential equations is new, and we believe that such an approach is very robust. Here, we model the tendency of commuters to travel in a city, and the diffusion takes care of the uncertainty around a tendency (which is modeled by a transport term). For instance, people going to work may change sometime their travel (to buy something, for example).

\begin{figure}
	\begin{center}
		\includegraphics[scale=1]{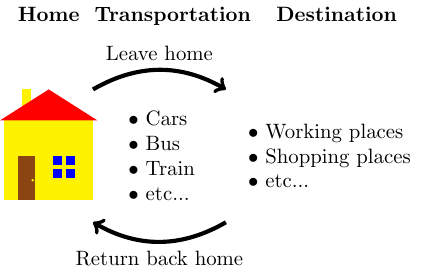}
	\end{center}
	\caption{\textit{Principle of the return-home model.}}\label{Fig2}
\end{figure}

\medskip 
The plane of the paper is the following. In section \ref{Section2}, we present the model. Section \ref{Section3} focuses on the motion of travelers by using a linear diffusion-convection equation in $L^1\left(\R^2\right)$. In section \ref{Section3}, we present an $L^1$ semigroup theory and prove the positivity and the preservation of the total mass of individuals. In section \ref{Section4}, we investigate the asymptotic behavior of the return-to-home model. Section \ref{Section5} presents a  hybrid model where the home locations are discrete. Section \ref{Section6} presents some numerical simulations of a hybrid model on a $\Omega=[0,1] \times [0,1]$. In section \ref{Section7}, we conclude the paper by discussing some perspectives. The appendix section \ref{SectionA} is devoted to the model on a bounded domain and its numerical scheme. 

\section{Eulerian formulation of the model}	

\label{Section2}
The principle of the model is described in Figure \ref{Fig4}. After leaving home, people spend some time commuting to their working places, and after spending some time at work, they return home. In the model, the average time spent at home will be $1/ \gamma$, the average time spent commuting is $1/ \alpha$, the average time spent at work in $1/ \chi$.

\medskip 
The average time spent at home $1/ \gamma$ should be approximately  equal to $12$ hours ($=0.5$ day) and the time spent at works  $1/ \chi$   should be approximately  equal to $10$ hours ($=0.41$ day)  will be much longer than the average time spent commuting $1/ \alpha$ which should be approximately  equal to $2$ hours ($=0.08$ day). But the point is to get a "simple" model to describe the movement of people using diffusion and convection. 

\medskip 
The parameters $1/ \gamma$,  $1/ \chi$, and $1/ \alpha$ may change with time, for example, during the lockdown due to an epidemic outbreak.

\begin{figure}
	\begin{center}
		\includegraphics[scale=0.5]{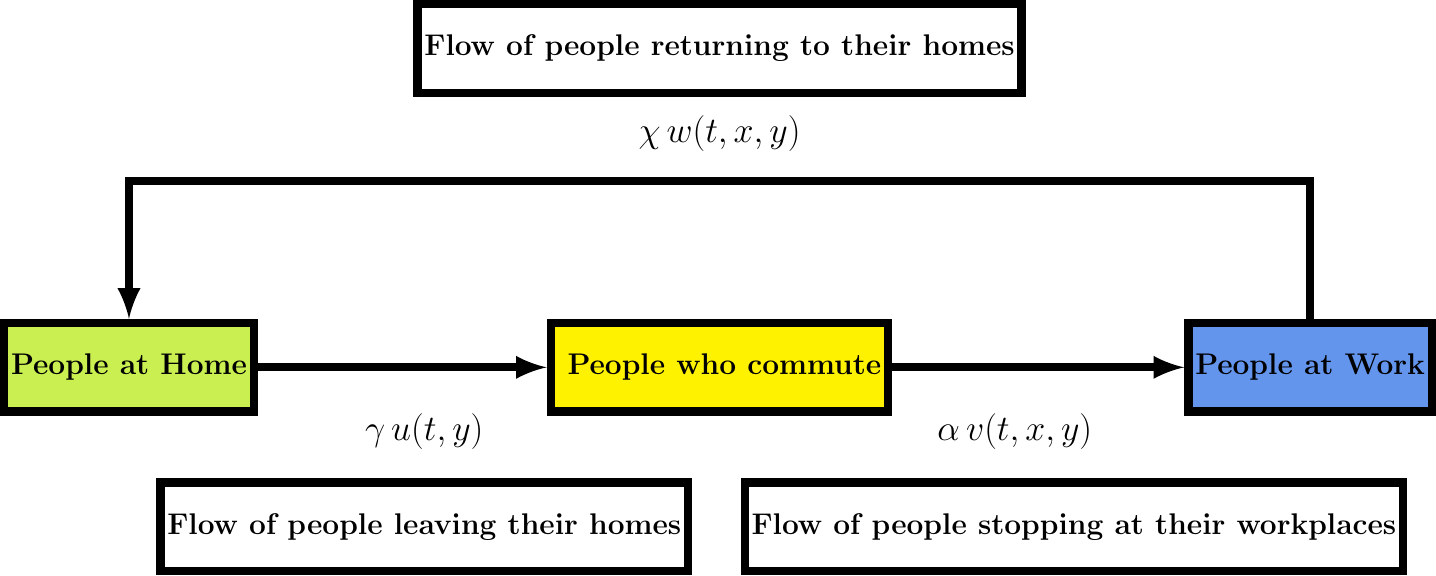}
	\end{center}
	\caption{\textit{Flowchart.}}\label{Fig3}
\end{figure}
Here, for simplicity, we focus on people who leave their homes to go to work. Therefore, the model is not focusing on people leaving their homes and spending a little time shopping, practicing their hobbies, etc... We consider this in the model by considering some random fluctuation around the main activity, which is working. Another simplification in the model is that people at work no longer move. So here we look at people working in offices or factories, and we neglect the people moving within the city for their job (ex., taxi drivers, etc...). So the model intends to capture only a part of the workers' movement.

We define the distribution of population   $y \in \R^2 \mapsto u(t,y) \in \R$ is the distribution of the population of the people staying at home at time $t$. That is to say that, for any subdomain $\omega \subset \R^2,$ 
$$
\int_{\omega   } u(t,y) dy \in \R,
$$
is the number of people staying at home with their home located in $\omega$ at time $t$. 

\medskip 
Let $y \in \R^2$ be the home location individuals. Then the distribution $x \to v(t,x,y)$ is the distribution of travelers who are going to their working place, some  shopping place, etc... and which are coming from a  home located at the position $y$.  That is to say that, for any subdomain $\omega \subset \R^2$ 
$$
\int_{\omega   } v(t,x,y) dx,
$$
is the number of \textit{travelers}  located in the region $\omega$ at time $t$ coming  from a home located at the position $y$.

\medskip 
The distribution $x \to w(t,x,y)$ is the distribution of individuals who arrived at their destination. Those people stay for a random time at their working place, a shopping place, and others before returning home. The home location of the distribution $x \to w(t,x,y)$  is $y$. We assume for simplicity that those people are no longer moving. That is, for any subdomain $\omega \subset \R^2$ 
$$
\int_{\omega   } w(t,x,y) dx,
$$
is the number of people who arrived at their destination in the subdomain $\omega$ at time $t$ and are not yet back home.

\medskip 
To simplify the analysis of the model, we consider the home location $y$ as a parameters of the model, and we use the notations
$$
u_y(t)=u(t,y), \, v_y(x)=v(x,y), \text{ and } w_y(x)=w(x,y).
$$ 
The return home model is the following  for each $y \in \R^2$, the system 
\begin{equation}\label{2.1}
	\left\{\begin{array}{ll}
		\partial_{t} u_y(t)= \chi \int_{\R^2}w_y(t,x)\dx-\gamma u_y(t),   \vspace{0.2cm} \\
		\partial_{t} v_y(t,x)=\varepsilon^2\Delta_{x}v_y(t,x)- \mathbf{\nabla}_x \cdot  \left( v_y\, \mathbf{C_y}	 \right)-\alpha v_y+\gamma g(x-y)u_y(t),    \vspace{0.2cm} \\
		\partial_{t} w_y(t,x)=\alpha v_y(t,x)-\chi w_y(t,x), 
	\end{array}\right.
\end{equation}
with the initial distribution 
\begin{equation} \label{2.2}
	\left\{ 	\begin{array}{l}
		u_y(0)=u_{y0} \in \R_+, \vspace{0.2cm} \\
		v_y(0,x)=v_{y0}(x) \in L^1_+ \left(\R^2\right), \vspace{0.2cm} \\
		\text{ and } \vspace{0.2cm} \\
		w_y(0,x)=w_{y0}(x) \in L^1_+ \left(\R^2\right).
	\end{array}	
	\right. 
\end{equation}
\begin{remark} \label{RE2.1}
	We refer to \cite{DM-2022} for an approach allowing the integrability of $u(t,x,y)$ with respect to both $x$ and $y$ for each $t>0$.  
\end{remark}
\begin{remark} \label{RE2.2} Through the paper for the Banach, we use $\Li^1 \left(\R^2\right)$ instead of $\Li^1 \left(\R^2, \R \right)$ to simplify the notations.  We will only specify the rang of maps whenever it is not equal to $\R$. 
\end{remark}

In the model, the map $x\to g(x-y)$ is a Gaussian distribution representing the location of a house centered at the position $y\in\R^2$. The function $g$ is defined by 
\begin{equation}\label{2.3}
	g(x_1,x_2)=\dfrac{1}{2\pi\sigma^2}e^{-\dfrac{x_{1}^{2}+x_{2}^{2}}{2\sigma^2}}.
\end{equation}
That is a Gaussian distribution centered at $0$, and with standard deviation  $\sigma>0$. Note that for all $y\in\R^2$, the translated map $g(\cdot-y)$ satisfies
$$
\int_{\R^2}g(x-y)\dx=1 \text{ and }\int_{\R^2}x g(x-y)\dx=y.
$$
In the model, $\Delta_{x}v_y $ is the Laplace operator of x with respect to the variable $x = (x_1, x_2)\in\R^2$. That is,
$$
\Delta_{x}v_y(t,x)=\partial^{2}_{x_1} v_y(t,x)+\partial^{2}_{x_2} v_y(t,x).
$$
The operator $\mathbf{\nabla}_x\cdot\left(v_y \, \mathbf{C_y}	\right) $ is the divergence of $v_y  \, \mathbf{C_y}	$  with respect to the variable $x = (x_1, x_2)\in\R^2$. That is,
$$
\mathbf{\nabla}_x \cdot\left(v_y(t,x) \, \mathbf{C_y}	(x)\right) =\partial_{x_1}  \biggl( v_y(t,x) \, \mathbf{C_y}(x)_1\biggr) +\partial_{x_2}  \biggl( v_y(t,x) \, \mathbf{C_y}(x)_2\biggr), 
$$
where 
$$ 
\mathbf{C_y}	(x)
=\left(\begin{array}{c}
	\mathbf{C_y}(x)_1\\ 
	\mathbf{C_y}(x)_2
\end{array}\right) \in \R^2,
$$ 
is the speed of individuals located at the position $x  \in \R^2$ and coming from a home located at the position $y  \in \R^2$.

\medskip 
The density of individuals per house remains constant with time. That is 
\begin{equation}\label{2.4}
	n(y)=	u_y(t)+ \int_{\R^2}v_y(t,x)\dx+  \int_{\R^2}w_y(t,x)\dx, \forall t \geq 0, \forall y \in \R^2, 	
\end{equation}
where $n(y)$ is the density of home in $\R^2$. That is, for each subdomain $\omega \subset \R^2$ 
$$
\int_{\omega   } n(y) dy,
$$
is the number of people having an their home in the subdomain $\omega$.

\medskip 

This motion speed of individuals in a city depends on their home location $y$ of individuals. The distance to individuals' workplaces often relies on their home location in the city. For example, people living in the suburbs travel much longer than people living downtown. Therefore, the traveling speed $\mathbf{C_y}(x)$ at $x \in \R^2$ depends on the home location $y$.  

\section{Model describing the motion of travelers}
\label{Section3}
The convection terms describes the tendency to moving the speed $ \mathbf{C_y}(x)$ at the location $x$ when they started from the home located at the position  $y$. The diffusion describe a random movement around the tendency corresponding to the  convection. In this model the displacement of individuals is described by 
\begin{equation}   \label{3.1}
	\begin{array}{ll}
		\partial_t v_y(t,x)&= \underset{\text{	\begin{tabular}{@{}c@{}}  Random \\motion 	\end{tabular} }}{\underbrace{ \varepsilon^2 \bigtriangleup_x v_y(t,x)}}- \underset{\text{
				\begin{tabular}{@{}c@{}}  
					Deterministic\\movement\\ with speed $\mathbf{C}$
		\end{tabular}}}{\underbrace{ \mathbf{\nabla}_x \cdot  \left(  v_y(t,x) \, \mathbf{C_y}(x) 	\right),}} 
	\end{array}	
\end{equation}
where $\varepsilon^2 \geq 0$ is the diffusion constant (which corresponds to the standard deviation of the law of displacement after one day of the around the original location), and $x \to \mathbf{C_y}(x)=\mathbf{C}(x,y) \in \R^2 $ is a deterministic  speed displacement at location $ x \in \R^2$ for individuals having their home located at the position $y \in \R^2$. 

\medskip 
In this section , we use semigroup theory to define the the solution of \eqref{3.1}. We refer to \cite{Amann, Engel-Nagel, S3, Haase, S4, S5, Magal-Ruan, S7, S8, S9,S10} for more result about semigroups generated by diffusive systems. The book of Lunardy provides a	 very detailed presentation for the case $L^p\left(\R^2\right)$ (with $1<p< \infty$). Here, we consider the case $p=1$. 
\subsection{Purely diffusive model} 
In this section, we consider the equation \eqref{3.1} the special case $\mathbf{C_y}(x) \equiv 0$. That is, 
\begin{equation}   \label{3.2}
	\left\{ 
	\begin{array}{rl}
		\partial_t v(t,x)&=  \varepsilon^2 \bigtriangleup_x v(t,x), \vspace{0.3cm} \\
		v(0,x)&=v_{0}(x) \in \Li^1\left(\R^2\right). 
	\end{array}	
	\right.
\end{equation}
We consider the family of bounded linear operator $\left\{	T_{ \varepsilon^2 \bigtriangleup_x  }(t) \right\}_{t \geq 0} \subset \L\left(\Li^1 \left(\R^2\right)\right)$ defined by 
\begin{equation*}  
	T_{ \varepsilon^2 \bigtriangleup_x  }(t)\left( v(.) \right) (x)=
	\left\{
	\begin{array}{ll} 
		\displaystyle	\int_{\mathbb R^2} K(t,x-z)  v(z)d z, &\text{ for } t>0, \vspace{0.3cm}\\
		v(x),  &\text{ for } t=0,
	\end{array}  
	\right.
\end{equation*}
with
\begin{equation*}   
	K(t,x)=  \dfrac{1}{4\pi\varepsilon^2 t} e^{-\frac{|x|^2}{4\varepsilon^2t}}.
\end{equation*}	
The family of bounded linear operator  $\left\{	T_{ \varepsilon^2 \bigtriangleup_x  }(t) \right\}_{t \geq 0} \subset \L\left(\Li^1 \left(\R^2\right)\right)$  is a strongly continuous semigroup on $\Li^1\left(\R^2\right)$. That is, 
\begin{itemize}
	\item[(i)] $T_{ \varepsilon^2 \bigtriangleup_x  }(0)=I;$
	\item[(ii)] $T_{ \varepsilon^2 \bigtriangleup_x  }(t)T_{ \varepsilon^2 \bigtriangleup_x  }(s)=T_{ \varepsilon^2 \bigtriangleup_x  }(t+s), \forall t,s \geq 0;$
	\item[(iii)] $t \mapsto T_{ \varepsilon^2 \bigtriangleup_x  }(t)u$ is continuous from $[0, +\infty)$ to $\Li^1\left(\R^2\right)$. 
\end{itemize}
Furthermore, $\left\{	T_{ \varepsilon^2 \bigtriangleup_x  }(t) \right\}_{t \geq 0} \subset \L\left(\Li^1 \left(\R^2\right)\right)$  is a semigroup of contraction 
$$
\Vert T_{ \varepsilon^2 \bigtriangleup_x  }(t)  \left(\phi \right) \Vert_{ \Li^1\left(\R^2\right) }\leq \Vert \phi \Vert_{ \Li^1\left(\R^2\right) }, \forall t \geq 0, \forall  \phi \in \Li^1\left(\R^2\right), 
$$ 
and $\left\{	T_{ \varepsilon^2 \bigtriangleup_x  }(t) \right\}_{t \geq 0} \subset \L\left(\Li^1 \left(\R^2\right)\right)$   is a positive semigroup, that is  
\begin{equation} \label{3.3}
	T_{ \varepsilon^2 \bigtriangleup_x  }(t) \bigg(\Li^1_+\left(\R^2\right)  \bigg)   \subset \Li^1_+\left(\R^2\right)  , \forall t \geq 0,  
\end{equation}
and the total of mass of individuals in preserved 
\begin{equation} \label{3.4}
	\int_{\R^2}  T_{ \varepsilon^2 \bigtriangleup_x  }(t)  \left(\phi \right) (x) \dx= \int_{\R^2}  \phi(x) \d x, \forall t \geq 0, \forall  \phi \in \Li^1_+\left(\R^2\right).	
\end{equation}
By using the semigroup property of $\left\{	T_{ \varepsilon^2 \bigtriangleup_x  }(t) \right\}_{t \geq 0} \subset \L\left(\Li^1 \left(\R^2\right)\right)$, we deduces that the family of linear operator 
$$
R_\lambda = \int_0^\infty e^{-\lambda t } 	T_{ \varepsilon^2 \bigtriangleup_x  }(t)\dt, \forall \lambda \in \C, \text{ with } \Re \lambda>0, 
$$
is a pseudo resolvent.   That is 
$$
R_\lambda -R_\mu= \left( \mu-\lambda \right)R_\lambda R_\mu, \forall \lambda, \mu \in \C, \text{ with } \Re \lambda>0. 
$$
From  Lemma 2.2.13. in \cite{Magal-Ruan}, we know that the null space $\Nu (R_\lambda )$ and the range $\Ra(R_\lambda)$ are independent of $\lambda \in \C$ with $\Re \lambda >0$, and the null space  $\Nu (R_\lambda )$ is closed in $\Li^1(\R^2)$. Moreover, by using the strong continuity of the semigroup $\left\{	T_{ \varepsilon^2 \bigtriangleup_x  }(t) \right\}_{t \geq 0} \subset \L\left(\Li^1 \left(\R^2\right)\right)$, one can prove that 
$$
\lambda \, R_\lambda u\to u, \text{ as } \lambda \to + \infty,  
$$
hence 
$$
\Nu (R_\lambda )=\left\{0_{\Li^1}\right\}, \forall \lambda \in \C, \text{ with } \Re \lambda>0. 
$$
Consequenlty,  it follows from  \cite[Proposition 2.2.14]{Magal-Ruan} that there exists a linear closed operator $A:D(A) \subset \Li^1 \left(\R^2\right) \to \Li^1 \left(\R^2\right)$, such that 
$$
R_\lambda =\left(\lambda I -A\right)^{-1}, \forall  \lambda \in \C, \text{ with } \Re \lambda>0. 
$$
Moreover $A$ is the infinitesimal generator of $\left\{	T_{ \varepsilon^2 \bigtriangleup_x  }(t) \right\}_{t \geq 0} \subset \L\left(\Li^1 \left(\R^2\right)\right)$. That is 
$$
D(A)= \left\{ u \in  \Li^1 \left(\R^2\right) : \lim_{t \searrow 0} \dfrac{T_{ \varepsilon^2 \bigtriangleup_x  }(t)u-u}{t} \text{ exists in }  \Li^1 \left(\R^2\right) \right\},
$$
and 
$$
Au= \lim_{t \searrow 0} \dfrac{T_{ \varepsilon^2 \bigtriangleup_x  }(t)u-u}{t}, \forall u \in D(A).
$$
To connect $A$ and $\varepsilon^2  \bigtriangleup_x$, one can prove that 
$$
\lim_{t \searrow 0} \dfrac{T_{ \varepsilon^2 \bigtriangleup_x  }(t)u-u}{t}=\varepsilon^2  \bigtriangleup_x  u, \forall u \in C^2_c \left(\R^2\right).
$$
where $C^2_c \left(\R^2\right)$ is the space of $C^2$ functions with compact support. 

It follows that 
$$ 
C^2_c \left(\R^2\right) \subset D(A),
$$ 
and 
$$
Au=\varepsilon^2  \bigtriangleup_x  u, \forall u \in C^2_c \left(\R^2\right).
$$
Since  $ C^2_c \left(\R^2\right)$ is dense in $ \Li^1 \left(\R^2\right)$,  it follows that the graph of $A$ is the closure of the graph of $\varepsilon^2  \bigtriangleup_x$ considered a linear operator from $ C^2_c \left(\R^2\right)$ into $ \Li^1 \left(\R^2\right)$. 

\begin{remark}  \label{RE3.1}
	In the above problem, the difficulty is to define the domain $D(A)$ of $A$ properly. This domain is not explicit in dimension $2$, and the goal is to guarantee the invertibility of  $\lambda I -A$ from $D(A)$ to $  \Li^1 \left(\R^2\right) $. The Proposition 8.1.3 p. 223 in the book Haase \cite{Haase} gives 
	$$
	W^{1,1}\left(\R^2\right) \subset D(A) \subset W^{2,1}\left(\R^2\right). 
	$$
\end{remark}


\begin{lemma}  \label{LE3.2} The semigroup $\left\{	T_{ \varepsilon^2 \bigtriangleup_x  }(t) \right\}_{t \geq 0} \subset \L\left(\Li^1 \left(\R^2\right)\right)$   is irreducible. That is,    for each $u \in L^1_+\left(\R^2\right)$ with $u \neq 0$, and each $\phi \in L^\infty_+\left(\R^2\right)$ with $\phi \neq 0$, 
	$$
	\int_{\R^2} \phi(x) T_{ \varepsilon^2 \bigtriangleup_x  }(t)(u)(x) dx >0, \forall t >0. 
	$$ 
\end{lemma}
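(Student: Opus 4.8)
The plan is to exploit the one feature that makes the Gaussian heat kernel special: strict positivity everywhere. Since for $t>0$ the semigroup acts by convolution,
$$
T_{ \varepsilon^2 \bigtriangleup_x  }(t)(u)(x)=\int_{\R^2} K(t,x-z) u(z) \dz, \quad K(t,x)= \dfrac{1}{4\pi\varepsilon^2 t} e^{-\frac{|x|^2}{4\varepsilon^2t}},
$$
the kernel satisfies $K(t,x-z)>0$ for \emph{every} pair $(x,z)\in\R^2\times\R^2$ and every $t>0$. This is the heart of the matter; everything else is a measure-theoretic consequence.

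First I would fix $t>0$, $u\in \Li^1_+(\R^2)$ with $u\neq 0$, and $\phi\in \Li^\infty_+(\R^2)$ with $\phi\neq 0$, and write out the quantity of interest. Because the integrand $\phi(x)\,K(t,x-z)\,u(z)$ is nonnegative, Tonelli's theorem applies and permits interchanging the order of integration:
$$
\int_{\R^2}\phi(x)\,T_{ \varepsilon^2 \bigtriangleup_x  }(t)(u)(x)\dx
=\int_{\R^2}\int_{\R^2}\phi(x)\,K(t,x-z)\,u(z)\dx\dz .
$$
No integrability subtlety arises here precisely because all three factors are nonnegative, so the double integral is a well-defined element of $[0,+\infty]$.

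Next I would argue that the integral is strictly positive. Since $u\geq 0$ and $u\neq 0$ in $\Li^1(\R^2)$, the set $U=\{z\in\R^2: u(z)>0\}$ has positive Lebesgue measure; likewise, since $\phi\geq 0$ and $\phi\neq 0$ in $\Li^\infty(\R^2)$, the set $\Phi=\{x\in\R^2:\phi(x)>0\}$ has positive Lebesgue measure. The product $\Phi\times U$ then has positive measure in $\R^2\times\R^2$, and on this product set the integrand $\phi(x)\,K(t,x-z)\,u(z)$ is strictly positive, using $K(t,x-z)>0$. The integral of a nonnegative measurable function that is strictly positive on a set of positive measure is strictly positive, which yields the claim for all $t>0$.

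There is no serious obstacle in this argument; the only point requiring care is to keep everything within the nonnegative framework so that Tonelli (rather than Fubini) is invoked and no cancellation or integrability question intrudes. The irreducibility is thus a direct reflection of the everywhere-strict-positivity of the Gaussian kernel, in contrast with what could fail for semigroups whose kernels vanish on large sets.
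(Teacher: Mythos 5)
Your proof is correct and takes essentially the same route as the paper's: both rest on the everywhere-strict positivity of the Gaussian kernel $K(t,\cdot)$ combined with Fubini--Tonelli to reduce the claim to the positivity of a double integral. If anything, your measure-theoretic closing step (positive-measure supports of $u$ and $\phi$, hence a positive-measure product set on which the integrand is strictly positive) is spelled out more completely than in the paper, which only notes that $x\mapsto T_{\varepsilon^2\bigtriangleup_x}(t)(u)(x)$ is continuous and strictly positive and leaves the final pairing with $\phi$ implicit.
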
 

\begin{proof} Let $u \in L^1_+\left(\R^2\right)$ with $u \neq 0$,  $ \phi \in L^\infty_+\left(\R^2\right)$ with $ \phi \neq 0$, and $t>0$.  By using Fubini theorem,  we have 
	$$
	\begin{array}{ll}
		\displaystyle	\int_{\R^2}  \phi(x)  T_{ \varepsilon^2 \bigtriangleup_x  }(t)(u)(x)dx &  = 	\displaystyle		\int_{\R^2}  \phi(x) 	\int_{\mathbb R^2} K(t,x-z)  u(z)d z  dx	
	\end{array}	
	$$
	and since 
	\begin{equation*}   
		K(t,x)=  \dfrac{1}{4\pi\varepsilon^2 t} e^{-\frac{|x|^2}{4\varepsilon^2t}}.
	\end{equation*}	
	it follows that $x \to \int_{\mathbb R^2}    K(t,x-x) u(x) dx $ is continuous and strictly positive for each $x \in \R^2$.  The result follows. 
\end{proof}
\subsection{Purely convective model} 
In this section, we consider the equation \eqref{3.1} the special case $\varepsilon=0$. That is, 
\begin{equation}  \label{3.5}
	\left\{ 
	\begin{array}{rl}
		\partial_t v(t,x)&=- \mathbf{\nabla}_x \cdot  \left(  v(t,x) \, \mathbf{C_y}(x) 	\right), \vspace{0.3cm} \\
		v(0,x)&=v_{0}(x) \in \Li^1\left(\R^2\right). 
	\end{array}	
	\right.
\end{equation}
To define the solutions integrated along the characteristics we make the following assumptions. 
\begin{assumption} \label{ASS3.3}
	Let $\mathbf{C}:\R^2 \to \R^2$ be a maps. We assume that 
	\begin{itemize}
		\item[{\rm (i)}] The map $x \in \R^2 \mapsto  \mathbf{C}(x)\in \R^2 $ is uniformly continuous bounded; 
		\item[{\rm (ii)}] The map $x \in \R^2 \mapsto  \mathbf{C}(x) \in \R^2 $ is supposed to be a $C^1$ function;
		\item[{\rm (iii)}] 	For $i=1,2$, the map 	$x \mapsto \partial_{x_i} \mathbf{C}(x)$ is bounded and uniformly continuous. 
	\end{itemize}
\end{assumption}
Assume that $\mathbf{C_y}$ satisfies the above assumption. Then the map  $x \in \R^2 \mapsto  \mathbf{C_y}(x)\in \R^2 $ is Lipschitz continuous, and the flow on $\R^2$ generated by 
\begin{equation} \label{3.6}
	\left\{ 
	\begin{array}{l}
		\partial_t \Pi_y(t)z=\mathbf{C_y}\left(\Pi_y(t)z\right), \forall t \in \R,  \vspace{0.2cm}\\
		\Pi_y(0)z=z \in \R^2,
	\end{array}
	\right.
\end{equation}
is well defined. Moreover we have the following property. 
\begin{lemma} \label{LE3.4} 	Assume that $\mathbf{C_y}$ satisfies Assumption \ref{ASS3.3}.We have 
	\begin{equation} \label{3.7}
		\det  \partial_z \Pi_y(t)z= \exp\left( \int_{0}^{t} \mathbf{\nabla}_x \cdot  \mathbf{C_y}\left(\Pi_y(\sigma )z\right)  \d \sigma  \right), \forall t \geq 0, 
	\end{equation}
	and 
	\begin{equation} \label{3.8}
		\det  \partial_z \Pi_y(-t)z= \exp\left(- \int_{0}^{t} \mathbf{\nabla}_x \cdot  \mathbf{C_y}\left(\Pi_y(-\sigma )z\right)  \d \sigma  \right), \forall t \geq 0.
	\end{equation}
\end{lemma}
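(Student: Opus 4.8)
The plan is to recognize \eqref{3.7} as Liouville's formula for the Jacobian of a flow and to derive it in two stages: first set up the variational (linearized) equation satisfied by the Jacobian matrix of $\Pi_y(t)$, then apply Jacobi's formula for the time-derivative of a determinant.

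First I would introduce the $2\times 2$ Jacobian matrix $M_y(t)=\partial_z \Pi_y(t)z$. Assumption \ref{ASS3.3} (ii)--(iii) guarantees that $\mathbf{C_y}$ is $C^1$ with bounded, uniformly continuous first derivatives, so by the standard theorem on differentiable dependence of ODE solutions on initial data, $z \mapsto \Pi_y(t)z$ is $C^1$ and one may interchange $\partial_t$ and $\partial_z$. Differentiating \eqref{3.6} with respect to $z$ then yields the variational equation
\[
\partial_t M_y(t)=D_x\mathbf{C_y}\!\left(\Pi_y(t)z\right) M_y(t), \qquad M_y(0)=I,
\]
where $D_x\mathbf{C_y}$ denotes the Jacobian matrix of the vector field $\mathbf{C_y}$.

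Next I would set $J_y(t)=\det M_y(t)$. Since $M_y(t)$ solves a linear matrix ODE with $M_y(0)=I$, it stays invertible for all $t$, so $J_y(t)\neq 0$ and Jacobi's formula applies:
\[
\frac{\d}{\d t}J_y(t)=J_y(t)\,\mathrm{tr}\!\left(M_y(t)^{-1}\partial_t M_y(t)\right)=J_y(t)\,\mathrm{tr}\!\left(D_x\mathbf{C_y}\!\left(\Pi_y(t)z\right)\right),
\]
where the conjugation by $M_y(t)$ disappears by cyclic invariance of the trace. Because $\mathrm{tr}\,D_x\mathbf{C_y}=\mathbf{\nabla}_x\cdot\mathbf{C_y}$, this is the scalar linear ODE $\dot J_y(t)=\left(\mathbf{\nabla}_x\cdot\mathbf{C_y}\right)\!\left(\Pi_y(t)z\right)J_y(t)$ with $J_y(0)=\det I=1$; integrating it gives exactly \eqref{3.7}. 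For \eqref{3.8} I would apply the identical computation to the backward flow $s\mapsto \Pi_y(-s)z$, which is the flow of the vector field $-\mathbf{C_y}$: the divergence $\mathbf{\nabla}_x\cdot(-\mathbf{C_y})=-\,\mathbf{\nabla}_x\cdot\mathbf{C_y}$, evaluated along $\Pi_y(-\sigma)z$, produces precisely the minus sign in the exponent. (Alternatively one can use $\Pi_y(-t)=\Pi_y(t)^{-1}$, multiplicativity of the determinant, and the change of variable $\sigma\mapsto-\sigma$ in the integral.)

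I expect the only genuine subtlety to be the justification of the variational equation, namely the $C^1$ dependence of $\Pi_y(t)z$ on $z$ and the legitimacy of exchanging $\partial_t$ and $\partial_z$, which is exactly what parts (ii)--(iii) of Assumption \ref{ASS3.3} are designed to secure. Once the linearized equation and the invertibility of $M_y(t)$ are in hand, the remainder is the routine algebra of Jacobi's formula together with an integration of a scalar linear ODE.
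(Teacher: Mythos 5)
Your proposal is correct and follows essentially the same route as the paper's proof: the variational equation for the Jacobian matrix, Jacobi's formula with the cyclic property of the trace, the identification of $\mathrm{tr}\,D_x\mathbf{C_y}$ with the divergence, and the observation that $t\mapsto\Pi_y(-t)z$ is the flow of $-\mathbf{C_y}$ to obtain \eqref{3.8} from \eqref{3.7}. Your added remarks on the $C^1$ dependence on initial data and the invertibility of the Jacobian matrix are sound refinements that the paper leaves implicit.
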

\begin{proof}
	Define $U(t):=\partial_z \Pi_y(t)z \in M_n \left(\R\right)$. We know that 
	$$
	\dfrac{dU(t)}{dt} = \nabla_x \mathbf{C_y}\left(\Pi_y(t)z\right) U(t), \text{ and } U(0)=I. 
	$$
	For any matrix-valued $C^1$  function $A : t  \mapsto A(t)$  the Jacobi's formula reads
	$$
	\dfrac{d}{dt}  \det A(t) = \det A(t) \, {\rm tr} (A^{-1}(t) \dfrac{dA(t)}{dt}) 
	$$
	and by using the property of the trace $ {\rm tr} \left(A B\right)= {\rm tr} \left(BA\right) $, we deduce that 
	$$
	\dfrac{d}{dt}  \det U(t) = \det  U(t) \, {\rm tr} (	\dfrac{dU(t)}{dt}  \, U(t)^{-1} ) =\det  U(t) \, {\rm tr} (	 \nabla_x \mathbf{C_y}\left(\Pi_y(t)z\right)  ) 
	$$
	and the result follows from the fact that 
	$$
	{\rm tr} (	 \nabla_x \mathbf{C_y}\left(\Pi_y(t)z\right)  ) = \mathbf{\nabla}_x \cdot  \mathbf{C_y}\left(\Pi_y(t)z\right)  . 
	$$
	Consider now $\widehat{\Pi}_y(t)z= \Pi_y(-t)z$. Then 
	\begin{equation} \label{3.9}
		\left\{ 
		\begin{array}{l}
			\partial_t \widehat{\Pi}_y(t)z=-\mathbf{C_y}\left(\widehat{\Pi}_y(t)z\right), \forall t \in \R,  \vspace{0.2cm}\\
			\widehat{\Pi}_y(0)z=z \in \R^2.
		\end{array}
		\right.
	\end{equation}
	Therefore \eqref{3.8} follows from \eqref{3.7}. 
\end{proof}

\medskip 
Assume first that the solution of \eqref{3.3} is $C^1$. That is 
$$
v \in C^1\left( \R \times \R^2, \R \right). 
$$
Then the right hand side of \eqref{3.3} can be expended, and \eqref{3.3} reads as 
\begin{equation*} 
	\partial_t v(t,x)=- \mathbf{C_y}(x) \cdot  \mathbf{\nabla}_x  v(t,x) -v(t,x) \,  \mathbf{\nabla}_x \cdot   \mathbf{C_y}(x), 	
\end{equation*}
where $ \mathbf{\nabla}_x  v(x) $ is the gradient of $x \mapsto v(x)$ which is defined by 
$$
\mathbf{\nabla}_x \,  v(x)=\left( \begin{array}{c}
	\partial_{x_1} v(x)\\
	\partial_{x_2} v(x)
\end{array}\right). 
$$
Moreover, we have 
\begin{equation*}
	\begin{array}{ll}
		\dfrac{d}{dt}v(t,\Pi_y(t)z)&=\partial_t v(t,\Pi_y(t)z)+ \nabla_x  v(t,\Pi_y(t)z) \cdot \partial_t \Pi_y(t)z \vspace{0.2cm}\\
		&= -\mathbf{C_y}(\Pi_y(t)z) \cdot  \mathbf{\nabla}_x  v(t,\Pi_y(t)z) -v(t,\Pi_y(t)z) \,  \mathbf{\nabla}_x \cdot   \mathbf{C_y}(\Pi_y(t)z) \vspace{0.2cm}\\
		& \quad + \nabla_x  v(t,\Pi_y(t)z) \cdot  \mathbf{C_y}\left(\Pi_y(t)z\right),
	\end{array}
\end{equation*}
and we obtain 
\begin{equation*}
	\dfrac{d}{dt}v(t,\Pi_y(t)z)= -v(t,\Pi_y(t)z) \,  \mathbf{\nabla}_x \cdot   \mathbf{C_y}(\Pi_y(t)z) \vspace{0.2cm}\\
\end{equation*}
Therefore 
\begin{equation*}
	v(t,\Pi_y(t)z)=\exp \left(- \int_{0}^{t}  \mathbf{\nabla}_x \cdot   \mathbf{C_y}(\Pi_y(\sigma )z) \, \d \sigma \right) v(0,z)
\end{equation*}
by choosing $z=\Pi_y(-t)x$ we obtain the following explicit formula for the solutions 
\begin{equation*}
	v(t,x)=\exp \left( -\int_{0}^{t}  \mathbf{\nabla}_x \cdot   \mathbf{C_y}(\Pi_y(\sigma-t )x) d \sigma \right) v_{0} \left(\Pi_y(-t)x \right),
\end{equation*}
or equivalently 
\begin{equation*}
	v(t,x)=\exp \left(- \int_{0}^{t}  \mathbf{\nabla}_x \cdot   \mathbf{C_y}(\Pi_y(-\sigma )x) d \sigma \right) v_{0} \left(\Pi_y(-t)x \right). 
\end{equation*}
We consider the family of bounded linear operator $\left\{		T_{ B_y }(t)  \right\}_{t \geq 0} \subset \L\left(\Li^1 \left(\R^2\right)\right)$ defined by 
\begin{equation}  \label{3.10}
	T_{ B_y }(t)\left( v_{0} \right) (x)=\exp \left(- \int_{0}^{t}  \mathbf{\nabla}_x \cdot   \mathbf{C_y}(\Pi_y(-\sigma )x) d \sigma  \right) v_{0} \left(\Pi_y(-t)x \right). 
\end{equation}
Similarly to the diffusion we also have the following result. 
\begin{lemma} \label{LE3.5}
	Assume that $\mathbf{C_y}$ satisfies Assumption \ref{ASS3.3}.	There exists a closed linear operator $B_y:D(B_y) \in \Li^1\left(\R^2\right) \to \Li^1\left(\R^2\right)$ the infinitesimal generator of a strongly continuous semigroup  $\left\{	T_{B_y  }(t) \right\}_{t \geq 0} \subset \L\left(\Li^1 \left(\R^2\right)\right)$ of positive bounded linear operator on $\Li^1 \left(\R^2\right)$ defined by \eqref{3.10}. 
\end{lemma}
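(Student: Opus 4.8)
The plan is to show that the family $\left\{ T_{B_y}(t) \right\}_{t \geq 0}$ defined by \eqref{3.10} is a strongly continuous semigroup of positive contractions on $\Li^1\left(\R^2\right)$; the existence and closedness of the generator $B_y$ then follow from the standard theory (see \cite{Magal-Ruan, Engel-Nagel}). First I would establish that each $T_{B_y}(t)$ is a bounded linear operator on $\Li^1\left(\R^2\right)$, in fact an isometry. Writing $E_y(t,x) = \exp\left(-\int_0^t \mathbf{\nabla}_x \cdot \mathbf{C_y}(\Pi_y(-\sigma)x)\, \d \sigma\right)$, I would perform the change of variables $z = \Pi_y(-t)x$, so that $x = \Pi_y(t)z$ and $\d x = \det \partial_z \Pi_y(t)z\, \d z$ by Lemma \ref{LE3.4}. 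Using the flow identity $\Pi_y(-\sigma)\Pi_y(t)z = \Pi_y(t-\sigma)z$ together with the substitution $\tau = t-\sigma$, the weight $E_y(t, \Pi_y(t)z)$ becomes exactly the reciprocal of the Jacobian $\det \partial_z \Pi_y(t)z$ given by \eqref{3.7}. The two factors cancel, yielding $\left\| T_{B_y}(t) v_0 \right\|_{\Li^1} = \left\| v_0 \right\|_{\Li^1}$, which simultaneously gives boundedness, the contraction property, and preservation of the total mass.

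Next I would check the algebraic semigroup laws. The identity $T_{B_y}(0) = I$ is immediate from \eqref{3.10} since $\Pi_y(0)x = x$ and the exponent integral vanishes. For the composition law I would compute $T_{B_y}(t)\left(T_{B_y}(s) v_0\right)(x)$ directly: the flow property $\Pi_y(-s)\Pi_y(-t)x = \Pi_y(-(t+s))x$ handles the argument of $v_0$, while the weights combine via $E_y(t,x)\, E_y\left(s, \Pi_y(-t)x\right) = E_y(t+s,x)$, which follows after the shift $\rho = \sigma + t$ merges the two exponent integrals into a single one over $[0, t+s]$. This gives $T_{B_y}(t) T_{B_y}(s) = T_{B_y}(t+s)$. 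Positivity is then immediate: $E_y(t,x) > 0$ and $v_0 \geq 0$ force $v_0(\Pi_y(-t)x) \geq 0$ almost everywhere, so $T_{B_y}(t)\left(\Li^1_+\left(\R^2\right)\right) \subset \Li^1_+\left(\R^2\right)$.

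The main obstacle is strong continuity, which I would first prove on the dense subspace $C_c\left(\R^2\right)$. For $v_0 \in C_c\left(\R^2\right)$ with support in a ball $B_R$, the support of $T_{B_y}(t) v_0$ is $\Pi_y(t)(\mathrm{supp}\, v_0)$, which remains inside a fixed larger ball for $t \in [0,1]$ because $\mathbf{C_y}$ is bounded by Assumption \ref{ASS3.3}. On that fixed ball, continuity of the flow gives $\Pi_y(-t)x \to x$ uniformly and $E_y(t,x) \to 1$ uniformly as $t \to 0^+$, so the uniform continuity of $v_0$ yields $T_{B_y}(t) v_0 \to v_0$ uniformly, hence in $\Li^1$. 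For general $v_0 \in \Li^1\left(\R^2\right)$ I would use the contraction bound and a standard density argument: approximating $v_0$ by $\phi \in C_c\left(\R^2\right)$, one has $\left\| T_{B_y}(t) v_0 - v_0 \right\|_{\Li^1} \leq 2\left\| v_0 - \phi \right\|_{\Li^1} + \left\| T_{B_y}(t)\phi - \phi \right\|_{\Li^1}$.

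Continuity at a general $t_0 > 0$ then follows from the semigroup law together with the contraction property, treating the one-sided limits separately: for $t \to t_0^+$ one writes $T_{B_y}(t) v_0 - T_{B_y}(t_0) v_0 = \left(T_{B_y}(t-t_0) - I\right) T_{B_y}(t_0) v_0$, and for $t \to t_0^-$ one bounds $\left\| T_{B_y}(t) v_0 - T_{B_y}(t_0) v_0 \right\|_{\Li^1} \leq \left\| v_0 - T_{B_y}(t_0 - t) v_0 \right\|_{\Li^1}$. Once $\left\{ T_{B_y}(t)\right\}_{t\geq 0}$ is known to be a $C_0$-semigroup of positive contractions, its infinitesimal generator $B_y$ is automatically a densely defined closed operator, completing the proof.
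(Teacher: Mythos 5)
Your proposal is correct and follows essentially the same route the paper takes: the paper states Lemma \ref{LE3.5} without a formal proof ("similarly to the diffusion\dots") and then verifies exactly the ingredients you supply, namely the Jacobian cancellation from Lemma \ref{LE3.4} giving the isometry \eqref{3.12} and mass conservation, positivity from the explicit formula \eqref{3.10}, and the identification of the generator on smooth compactly supported functions. Your write-up simply fills in the omitted details (the semigroup law via the flow and weight composition, and strong continuity by density from $C_c\left(\R^2\right)$), all of which are sound.
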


We observe that we have the following conservation of the number of individuals is preserved. That is, for each Borelian set $\Omega \subset \R^2$, 
$$
\int_{\Omega}	T_{ B_y }(t)\left( v_{0} \right) (x)=\int_{\Omega}	\exp \left(- \int_{0}^{t}  \mathbf{\nabla}_x \cdot   \mathbf{C_y}(\Pi_y(-\sigma )x) d \sigma  \right) v_{0} \left(\Pi_y(-t)x \right)dx 
$$
and by using \eqref{3.8}, we obtain 
$$
\int_{\Omega}	T_{ B_y }(t)\left( v_{0} \right) (x)=\int_{\Omega}	 v_{0} \left(\Pi_y(-t)x \right) \det  \partial_z \Pi_y(-t)x dx,
$$ 
therefore by making a change of variable $z=\Pi_y(-t)x $, we obtain 
\begin{equation*}
	\int_{\Omega}	T_{ B_y }(t)\left( v_{0} \right) (x)=\int_{\Pi_y(-t) \Omega}	 v_{0} \left( z \right) dz, \forall t \geq 0. 
\end{equation*}
When $\Omega= \R^2$, we deduce that the total mass of individuals is preserved. That is, 
\begin{equation} \label{3.11}
	\int_{\R^2}	T_{ B_y }(t)\left( v_{0} \right) (x)dx=\int_{\R^2}		 v_{0} \left( x \right) dx, \forall t \geq 0. 
\end{equation}
By using the semi-explicitly formula \eqref{3.10} that $\left\{	T_{B_y  }(t) \right\}_{t \geq 0} \subset \L\left(\Li^1 \left(\R^2\right)\right)$  is a strongly continuous semigroup on $\Li^1\left(\R^2\right)$, and 
\begin{equation}  \label{3.12}
	\Vert T_{B_y  }(t) v_{0} \Vert_{ \Li^1\left(\R^2\right) }=	\Vert v_{0} \Vert_{ \Li^1\left(\R^2\right) }, \forall t \geq 0. 
\end{equation}
Moreover, one has
$$
\lim_{t \searrow 0} \dfrac{T_{B_y   }(t)v_{0}-v_{0}}{t}=- \mathbf{\nabla}_x \cdot  \left(  v_{0}(x) \, \mathbf{C_y}(x) 	\right), \forall v_{0} \in C^1 \left(\R^2\right) \cap W^{1,1}\left(\R^2\right), 
$$
where 
$$
C^1 \left(\R^2\right) \cap W^{1,1}\left(\R^2\right)= \left\{ v \in  C^1 \left(\R^2\right) \cap \Li^{1}\left(\R^2\right): x \mapsto \partial_{x_i} v(x) \in  \Li^{1}\left(\R^2\right), \forall i=1,2  \right\}.
$$
It follows that,
$$ 
C^1_c \left(\R^2\right) \subset  C^1 \left(\R^2\right) \cap W^{1,1}\left(\R^2\right) \subset D(B_y),
$$ 
where $C^1_c \left(\R^2\right)$ is the space of $C^1$ with compact support. 

\medskip 
Moreover, 
$$
B_y v=- \mathbf{\nabla}_x \cdot  \left(  v(x) \, \mathbf{C_y}(x) 	\right), \forall v \in C^1 \left(\R^2\right) \cap W^{1,1}\left(\R^2\right) ,
$$
and since $ C^1_c \left(\R^2\right)$ is dense in $ \Li^1 \left(\R^2\right)$,  it follows that the graph of $B_y$ is the closure of the graph of $v \mapsto - \mathbf{\nabla}_x \cdot  \left(  v(x) \, \mathbf{C_y}(x) 	\right)$ considered a linear operator from $ C^1_c \left(\R^2\right)$ into $ \Li^1 \left(\R^2\right)$. 
\subsection{Existence of mild solutions for the full problem with both diffusion and convection}
In this section, we consider the full equation \eqref{3.1}
\begin{equation}  \label{3.13}
	\left\{ 
	\begin{array}{rl}
		\partial_t v(t,x)&= \varepsilon^2 \bigtriangleup_x v(t,x) - \mathbf{\nabla}_x \cdot  \left(  v(t,x) \, \mathbf{C_y}(x) 	\right), \vspace{0.2cm} \\
		v(0,x)&=v_{0}(x) \in \Li^1\left(\R^2\right). 
	\end{array}	
	\right.
\end{equation}
By using the notations introduced in the previous sections, this problem rewrites as the following abstract Cauchy problem 
\begin{equation} \label{3.14}
	\left\{ 
	\begin{array}{l}
		v'(t)=(A+B_y)v(t), \text{ for } t \geq 0,  \vspace{0.2cm} \\
		v(0)=v_0 \in \Li^1\left(\R^2\right). 	
	\end{array}
	\right.
\end{equation}
In order to define the mild solutions of \eqref{3.13} as a continuous function $t \in \left[0, \infty\right) \mapsto v(t) \in \Li^1\left(\R^2\right),$ a mild solution 
\begin{equation} 
	\label{3.15}
	v(t)=T_{ \varepsilon^2 \bigtriangleup_x  }(t)v_0+ \int_0^t T_{ \varepsilon^2 \bigtriangleup_x  }(t-\sigma)B_y \,v(\sigma ) \d\sigma.
\end{equation}
The existence of the solutions follows by considering the following system 
\begin{equation}  \label{3.16}
	\left\{ 
	\begin{array}{l}
		v(t)=T_{ \varepsilon^2 \bigtriangleup_x  }(t)v_0+ \int_0^t T_{ \varepsilon^2 \bigtriangleup_x  }(t-\sigma)w(\sigma ) \d\sigma, \vspace{0.2cm} \\
		w(t)=B_yT_{ \varepsilon^2 \bigtriangleup_x  }(t)v_0+ \int_0^t B_yT_{ \varepsilon^2 \bigtriangleup_x  }(t-\sigma)w(\sigma ) \d\sigma.
	\end{array}
	\right.
\end{equation}
We observe that 
\begin{equation} \label{3.17}
	\mathbf{\nabla}_x \cdot  \left( w(t,x) \, \mathbf{\widehat{C}_y}(x) \right) = \mathbf{C}_y(x) \cdot  \mathbf{\nabla}_x w(t,x) +w(t,x) \,  \mathbf{\nabla}_x \cdot   \mathbf{C_y}(x), 	
\end{equation}
where $ \mathbf{\nabla}_x  w(t,x) $ is the gradient of $x \mapsto w(t,x)$ which is defined by 
$$
\mathbf{\nabla}_x \,  w(t,x)=\left( \begin{array}{c}
	\partial_{z_1} w(t,x)\\
	\partial_{z_2} w(t,x)
\end{array}\right). 
$$

\begin{lemma} \label{LE3.6} 	Assume that $\mathbf{C_y}$ satisfies Assumption \ref{ASS3.3}. Let   $ y \in \R^2$. There exists a constant $\kappa>0$ such that   for each $ u \in \Li^1 \left(\R^2\right)$, 
	\begin{equation}  \label{3.18}
		T_{ \varepsilon^2 \bigtriangleup_z  }(t) u \subset  C^1 \left(\R^2\right) \cap W^{1,1}\left(\R^2\right) \subset D \left( B_y \right), \forall t>0,  
	\end{equation}
	and 
	\begin{equation} \label{3.19}
		\Vert B_{y}  	T_{ \varepsilon^2 \bigtriangleup_z  }(t) u \Vert_{\Li^1 \left(\R^2 \right)}  \leq  \kappa \left( \dfrac{1}{\sqrt{\varepsilon^2t}} +1  \right)  \Vert u \Vert_{\Li^1 \left(\R^2 \right) }, \forall t>0 . 
	\end{equation}
\end{lemma}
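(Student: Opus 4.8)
The plan is to exploit the explicit convolution representation of the heat semigroup together with the product-rule form of $B_y$, thereby reducing the whole statement to $\Li^1$-norm estimates on the Gaussian kernel and its gradient. Recall that for $t>0$ one has $T_{\varepsilon^2\bigtriangleup_x}(t)u = K(t,\cdot)\ast u$, and that for smooth arguments $B_y v = -\mathbf{C_y}\cdot\mathbf{\nabla}_x v - v\,\mathbf{\nabla}_x\cdot\mathbf{C_y}$. I would therefore write
\begin{equation*}
B_y T_{\varepsilon^2\bigtriangleup_x}(t)u = -\mathbf{C_y}\cdot\left(\mathbf{\nabla}_x K(t,\cdot)\ast u\right) - \left(K(t,\cdot)\ast u\right)\mathbf{\nabla}_x\cdot\mathbf{C_y},
\end{equation*}
so that the only quantities entering the estimate are $\Vert K(t,\cdot)\Vert_{\Li^1(\R^2)}$, $\Vert\mathbf{\nabla}_x K(t,\cdot)\Vert_{\Li^1(\R^2)}$, and the uniform bounds on $\mathbf{C_y}$ and its divergence granted by Assumption~\ref{ASS3.3}.

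For the inclusion \eqref{3.18}, I would argue that since $K(t,\cdot)$ is a Schwartz function for each fixed $t>0$, the convolution $K(t,\cdot)\ast u$ is $C^\infty$ for any $u\in\Li^1\left(\R^2\right)$, and differentiation passes onto the kernel, $\mathbf{\nabla}_x(K(t,\cdot)\ast u)=\left(\mathbf{\nabla}_x K(t,\cdot)\right)\ast u$, which is justified by dominated convergence because $\mathbf{\nabla}_x K(t,\cdot)\in\Li^1\left(\R^2\right)$. Young's inequality then yields $K(t,\cdot)\ast u\in\Li^1\left(\R^2\right)$ and $\mathbf{\nabla}_x(K(t,\cdot)\ast u)\in\Li^1\left(\R^2\right)$, so $T_{\varepsilon^2\bigtriangleup_x}(t)u\in C^1\left(\R^2\right)\cap W^{1,1}\left(\R^2\right)$; the membership in $D(B_y)$ is then immediate from the inclusion $C^1\left(\R^2\right)\cap W^{1,1}\left(\R^2\right)\subset D(B_y)$ already established at the end of the purely convective subsection.

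The estimate \eqref{3.19} then follows by applying $\Vert f\ast u\Vert_{\Li^1}\le\Vert f\Vert_{\Li^1}\Vert u\Vert_{\Li^1}$ to each of the two terms, using $\Vert K(t,\cdot)\Vert_{\Li^1(\R^2)}=1$ for the mass term and the gradient bound for the transport term. The singular factor is produced precisely by the gradient norm, which I would compute by the self-similar scaling $K(t,x)=(\varepsilon^2 t)^{-1}\Phi\!\left(x/\sqrt{\varepsilon^2 t}\right)$ with $\Phi(x)=\tfrac{1}{4\pi}e^{-|x|^2/4}$. This gives $\mathbf{\nabla}_x K(t,x)=(\varepsilon^2 t)^{-3/2}(\mathbf{\nabla}\Phi)\!\left(x/\sqrt{\varepsilon^2 t}\right)$, and after the change of variables $y=x/\sqrt{\varepsilon^2 t}$,
\begin{equation*}
\Vert\mathbf{\nabla}_x K(t,\cdot)\Vert_{\Li^1(\R^2)}=\frac{\Vert\mathbf{\nabla}\Phi\Vert_{\Li^1(\R^2)}}{\sqrt{\varepsilon^2 t}}=\frac{\sqrt{\pi}}{2\sqrt{\varepsilon^2 t}}.
\end{equation*}
Collecting the two contributions and setting $\kappa=\max\left\{\tfrac{\sqrt{\pi}}{2}\sup_{x}\left|\mathbf{C_y}(x)\right|,\ \sup_{x}\left|\mathbf{\nabla}_x\cdot\mathbf{C_y}(x)\right|\right\}$, which is finite by Assumption~\ref{ASS3.3}(i) and (iii), produces exactly \eqref{3.19}.

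The only genuinely computational step, and thus the main obstacle, is pinning down the $\Li^1$ norm of the Gaussian gradient and its precise blow-up as $t\searrow 0$; everything else is Young's inequality and the product rule. I would emphasize that the exponent $1/2$ in the factor $(\varepsilon^2 t)^{-1/2}$, rather than $1$, is what makes the singularity integrable near $t=0$, and it comes directly from differentiating the self-similar kernel exactly once. This integrability is precisely what later permits a fixed-point/perturbation treatment of the mild formulation \eqref{3.16}, so the sharp power of $t$ is the point worth securing carefully.
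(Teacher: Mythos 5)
Your proposal is correct and follows essentially the same route as the paper: both reduce \eqref{3.19} to the bound $\Vert \mathbf{\nabla}_x K(t,\cdot)\Vert_{\Li^1\left(\R^2\right)}=O\left(1/\sqrt{\varepsilon^2 t}\right)$ combined with the uniform bounds on $\mathbf{C_y}$ and $\mathbf{\nabla}_x\cdot\mathbf{C_y}$ from Assumption \ref{ASS3.3}, the only difference being that the paper computes the gradient norm by factorizing $K(t,x)=K_1(t,x_1)K_1(t,x_2)$ and applying Fubini, while you use the self-similar scaling of the two-dimensional kernel. Your write-up is in fact slightly more complete than the paper's, since you spell out the smoothing step behind \eqref{3.18} and track the constant $\kappa$ explicitly.
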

\begin{proof} We observe that 
	\begin{equation*}   
		K(t,x)=  \dfrac{1}{4\pi\varepsilon^2 t} e^{-\frac{x_1^2+x_2^2}{4\varepsilon^2t}}=	K_1(t,x_1)	K_1(t,x_2).  
	\end{equation*}
	where 
	\begin{equation*}   
		K_1(t,x)=  \dfrac{1}{\sqrt{4\pi \varepsilon^2t}} e^{-\frac{x^2}{4\varepsilon^2t}} \,.
	\end{equation*} 
	Moreover 
	$$
	\int_{\R} \vert \partial_{x} 	K_1(t,x) \vert dx = \dfrac{2}{\sqrt{4\varepsilon^2t}}  \int_0^\infty \frac{2x}{4\varepsilon^2t} e^{-\frac{x^2}{4\varepsilon^2t}}dx=\dfrac{1}{\sqrt{\varepsilon^2t}} . 
	$$
	We observe that 
	$$
	\begin{array}{ll}
		\partial_{x_1}  	T_{ \varepsilon^2 \bigtriangleup_x  }(t) \left( u  \right)	\left(x_1,x_2 \right)  = \int_{\mathbb R}   \partial_{x} K_1(t,x_1-\sigma_1)  \int_{\mathbb R}   K_1(t,x_2-\sigma_2) u (\sigma_1,\sigma_2) d\sigma_2 d \sigma_1,
	\end{array}
	$$
	and it follows 
	$$
	\begin{array}{ll}
		\Vert	\partial_{x_1}  	T_{ \varepsilon^2 \bigtriangleup_x  }(t) \left( u  \right)	 \Vert_{ \Li^1\left(\R^2\right) } & \leq \displaystyle \int_{\R} \vert \partial_{x} 	K_1(t,x) \vert dx \vspace{0.2cm} \\
		& \quad  \displaystyle \times \int_{\mathbb R}   \int_{\mathbb R}   K_1(t,x_2-\sigma_2) u (\sigma_1,\sigma_2) d\sigma_2 d \sigma_1 , 
	\end{array}
	$$  
	and the proof is completed. 
\end{proof}
By using the previous we deduce the following result.  
\begin{proposition}  \label{PROP3.7}	Assume that $\mathbf{C_y}$ satisfies Assumption \ref{ASS3.3}. We have 
	\begin{equation} \label{3.20}
		D\left(B_y\right) \subset  D(A) , 	
	\end{equation}
	and 
	\begin{equation} \label{3.21}
		\Vert B_y \left(\lambda I -A\right)^{-1} \Vert_{\L\left( \Li^1\left(\R^2\right) \right) } \leq \int_{0}^{\infty}  e^{-\lambda t }  \kappa \left( \dfrac{1}{\sqrt{\varepsilon^2t}} +1  \right)   \d t , \forall \lambda >0.	
	\end{equation}
\end{proposition}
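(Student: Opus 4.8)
The plan is to exploit the fact, established just above this proposition, that $A$ generates the contraction semigroup $\left\{T_{\varepsilon^2\bigtriangleup_x}(t)\right\}_{t\geq 0}$, so that for every $\lambda>0$ its resolvent is the Laplace transform of the semigroup,
$$\left(\lambda I -A\right)^{-1}u=\int_0^\infty e^{-\lambda t}\,T_{\varepsilon^2\bigtriangleup_x}(t)u\,\dt,\qquad u\in\Li^1\left(\R^2\right).$$
First I would fix $u\in\Li^1\left(\R^2\right)$ and invoke Lemma \ref{LE3.6}: for every $t>0$ one has $T_{\varepsilon^2\bigtriangleup_x}(t)u\in D(B_y)$, together with the quantitative smoothing bound \eqref{3.19}. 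The decisive analytic observation is that the $\Li^1\left(\R^2\right)$-valued map $t\mapsto e^{-\lambda t}B_y T_{\varepsilon^2\bigtriangleup_x}(t)u$ is Bochner integrable on $(0,\infty)$: indeed \eqref{3.19} furnishes the scalar majorant $e^{-\lambda t}\kappa\left(1/\sqrt{\varepsilon^2 t}+1\right)\Vert u\Vert_{\Li^1\left(\R^2\right)}$, whose singularity $t^{-1/2}$ at the origin is integrable and whose exponential factor guarantees integrability at $+\infty$.

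Next I would transfer $B_y$ inside the integral. Since $B_y$ is a closed operator (Lemma \ref{LE3.5}), I would approximate $\int_0^\infty e^{-\lambda t}T_{\varepsilon^2\bigtriangleup_x}(t)u\,\dt$ by partial integrals over compact subintervals $[\delta,M]\subset(0,\infty)$, each of which lies in $D(B_y)$ and to which $B_y$ applies by commuting with the Bochner integral on a compact interval; letting $\delta\searrow 0$ and $M\nearrow\infty$, both these elements and their images under $B_y$ converge in $\Li^1\left(\R^2\right)$ by the integrability just established, so the closedness of $B_y$ yields $\left(\lambda I -A\right)^{-1}u\in D(B_y)$ and
$$B_y\left(\lambda I -A\right)^{-1}u=\int_0^\infty e^{-\lambda t}\,B_y T_{\varepsilon^2\bigtriangleup_x}(t)u\,\dt.$$
As $u$ ranges over $\Li^1\left(\R^2\right)$ and $\left(\lambda I -A\right)^{-1}$ maps onto $D(A)$, this shows $B_y\left(\lambda I -A\right)^{-1}$ is everywhere defined, i.e.\ $D(A)\subset D(B_y)$, which is the domain inclusion \eqref{3.20} expressing the $A$-boundedness of $B_y$ (I note that the meaningful direction here, consistent with \eqref{3.19} and with the estimate \eqref{3.21}, is $D(A)\subset D(B_y)$).

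Finally, taking $\Li^1\left(\R^2\right)$-norms in the displayed identity, moving the norm inside the Bochner integral and inserting \eqref{3.19} gives
$$\left\Vert B_y\left(\lambda I -A\right)^{-1}u\right\Vert_{\Li^1\left(\R^2\right)}\leq\left(\int_0^\infty e^{-\lambda t}\,\kappa\left(\dfrac{1}{\sqrt{\varepsilon^2 t}}+1\right)\dt\right)\Vert u\Vert_{\Li^1\left(\R^2\right)},$$
and taking the supremum over $\Vert u\Vert_{\Li^1\left(\R^2\right)}\leq 1$ yields exactly \eqref{3.21}.

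I expect the genuine obstacle to be the rigorous commutation of the closed operator $B_y$ with the improper, singular Bochner integral, rather than the convergence of the scalar integral (which is elementary). The clean way to secure it is to first work on compact subintervals $[\delta,M]$, where $t\mapsto T_{\varepsilon^2\bigtriangleup_x}(t)u$ is continuous into $D(B_y)$ endowed with the graph norm, so that $B_y$ and $\int_\delta^M$ commute by the standard closed-operator/Bochner-integral lemma, and only then to pass to the limits $\delta\searrow0$ and $M\nearrow\infty$, the singular endpoint $t=0$ being controlled precisely by the integrable $t^{-1/2}$ factor in \eqref{3.19}.
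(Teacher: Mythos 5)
Your proposal follows essentially the same route as the paper's proof: represent $\left(\lambda I -A\right)^{-1}$ as the Laplace transform of the heat semigroup, use the smoothing estimate \eqref{3.19} from Lemma \ref{LE3.6} together with the closedness of $B_y$ to pull $B_y$ inside the Bochner integral, and read off the bound \eqref{3.21}. Your two refinements are both correct and welcome: the careful justification of commuting the closed operator with the singular improper integral via compact subintervals (which the paper asserts without detail), and the observation that the inclusion actually established is $D(A)\subset D\left(B_y\right)$ --- the direction needed to define $A+B_y$ on $D(A)$ immediately afterwards --- so that \eqref{3.20} as printed appears to have the inclusion reversed.
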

\begin{proof}
	Let $\lambda >0$.  We have 
	$$
	D(A)=	\left(\lambda I- A \right)^{-1}	\Li^1\left( \R^2 \right),
	$$
	and
	$$
	\left(\lambda I- A \right)^{-1}u=\int_{0}^{\infty}  e^{-\lambda t }	T_{ \varepsilon^2 \bigtriangleup_z  }(t) u  \d t, \forall u \in \Li^1\left( \R^2 \right). 
	$$
	Since $B_y$ is a closed linear operator, we have  
	$$
	B_y 	\left(\lambda I- A \right)^{-1}u =\int_{0}^{\infty}  e^{-\lambda t } B_y	T_{ \varepsilon^2 \bigtriangleup_z  }(t) u   \d t , \forall u \in \Li^1\left( \R^2 \right),
	$$
	and by \eqref{3.19} we deduce that the right hand-side of the above equality is integrable, and \eqref{3.20} follows, and  
	$$
	\Vert  	B_y 	\left(\lambda I- A \right)^{-1}u \Vert_{\Li^1 \left(\R^2 \right) } \leq \int_{0}^{\infty}  e^{-\lambda t }  \kappa \left( \dfrac{1}{\sqrt{\varepsilon^2t}} +1  \right)   \d t \Vert u \Vert_{\Li^1 \left(\R^2 \right) } . 
	$$ 
\end{proof}

\medskip 
Since $D(B_y) \subset D(A)$, we can  $\left(A+B_y\right) : D(A) \subset  \Li^1\left(\R^2\right) \to  \Li^1\left(\R^2\right)$ is well defined by 
$$
\left(A+B_y\right) u=Au+B_y u, \forall u \in D(A). 
$$
\begin{definition}  \label{DE3.8}
	We will say that a continuous map $u \in C \left( [0,\infty),  \Li^1\left(\R^2\right)\right) $ is a\textbf{ mild solution} of \eqref{3.14} if and only if 
	$$
	\int_{0}^{t} v(s)ds  \in D(A), \forall t \geq 0,
	$$
	and
	$$
	v(t)=v_0 + \left(A+B_y\right) \int_{0}^{t} v(s)ds.
	$$
\end{definition}

We observe that 
$$
K\left(\alpha \right) = \kappa \int_{0}^{\infty} e^{-\alpha \sigma} \left(  \dfrac{1}{\sqrt{\varepsilon^2t}} +1  \right)  \d \sigma  <\infty, \forall \alpha >0, 
$$
and 
$$
\lim_{ \alpha \to+\infty} K\left(\alpha \right) =0. 
$$
We consider the weighted space of integrable function $ L^1_\alpha  \left( \left( 0,\infty \right); \Li^1\left(\R^2\right) \right)$ which is the space of Bochner measurable function $t \mapsto f(t)$ from $(0, \infty)$ to $ \Li^1\left(\R^2\right) $ satisfying 
$$
\int_{0}^{\infty} e^{-\alpha t} \Vert f (t)\Vert_{ \Li^1\left(\R^2\right) } \dt <+\infty.
$$
Then $ L^1_\alpha  \left( \left( 0,\infty \right); \Li^1\left(\R^2\right) \right)$ is a Banach space endowed with the norm 
$$
\Vert f \Vert_{ L^1_\alpha } = \int_{0}^{\infty} e^{-\alpha t} \Vert f (t)\Vert_{ \Li^1\left(\R^2\right) } \dt . 
$$
Let $\alpha_0>0$ such that $K\left(\alpha_0 \right) <1$. Then for each $v_0 \in \Li^1\left(\R^2\right)$, by applying the Banach fixed theorem, we deduce that there exists a unique solution $ w \in L^1_{\alpha_0}  \left( \left( 0,\infty \right); \Li^1\left(\R^2\right) \right) $ satisfying the fixed point problem 
\begin{equation} \label{3.22}
	w(t)=B_yT_{ \varepsilon^2 \bigtriangleup_x  }(t)v_0+ \int_0^t B_yT_{ \varepsilon^2 \bigtriangleup_x  }(t-\sigma)w(\sigma ) \d\sigma.	 
\end{equation}
By using the same arguments as in   Ducrot, Magal, and Prevost \cite[Theorem 4.8]{DPM}, we obtain the following result. 
\begin{theorem} \label{TH3.9}
	Assume that $\mathbf{C_y}$ satisfies Assumption \ref{ASS3.3}.	 Let $\alpha_0>0$ such that $K\left(\alpha_0 \right) <1$. The linear operator $\left(A+B_y\right): D(A) \subset  \Li^1\left(\R^2\right) \to  \Li^1\left(\R^2\right)$ is the infinitesimal generator of an analytic semigroup. Moreover, for each $v_0 \in \Li^1\left(\R^2\right)$, the Cauchy problem  \eqref{3.14} admits a unique mild solution $t \to T_{A+B_y}(t)v_0$. Furthermore, the map $t \to v(t)= T_{A+B_y}(t)v_0$ satisfies 
	$$
	v(t)=T_{ \varepsilon^2 \bigtriangleup_x  }(t)v_0+ \int_0^t T_{ \varepsilon^2 \bigtriangleup_x  }(t-\sigma)w(\sigma ) \d\sigma, \forall t \geq 0,	 
	$$
	where $ w \in L^1_{\alpha_0}  \left( \left( 0,\infty \right); \Li^1\left(\R^2\right) \right) $  is the unique solution the fixed point problem \eqref{3.22}.
\end{theorem}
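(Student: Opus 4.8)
The plan is to read \eqref{3.14} as the analytic generator $A$ subjected to the relatively bounded perturbation $B_y$, to obtain generation of an analytic semigroup by perturbation theory, and then to construct and represent the mild solution by a fixed point argument in the weighted space $L^1_{\alpha_0}\left(\left(0,\infty\right); \Li^1\left(\R^2\right)\right)$. First I would record that $A$, being the generator of the diffusion (heat) semigroup $\left\{T_{\varepsilon^2 \bigtriangleup_x}(t)\right\}_{t\geq 0}$ on $\Li^1\left(\R^2\right)$, is sectorial: there exist $M\geq 1$ and $\delta \in (0,\pi/2)$ such that $(\lambda I - A)^{-1}$ exists and $\Vert (\lambda I - A)^{-1}\Vert \leq M/\vert \lambda\vert$ on the sector $\vert \arg \lambda\vert < \pi/2 + \delta$, $\lambda \neq 0$. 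Next, Proposition \ref{PROP3.7} shows that $B_y$ is $A$-bounded with relative bound zero: for $u \in D(A)$ and $\lambda>0$, writing $u = (\lambda I - A)^{-1}(\lambda u - Au)$ and using \eqref{3.21} gives
\begin{equation*}
\Vert B_y u \Vert_{\Li^1\left(\R^2\right)} \leq K(\lambda) \Vert A u \Vert_{\Li^1\left(\R^2\right)} + \lambda K(\lambda)\Vert u \Vert_{\Li^1\left(\R^2\right)},
\end{equation*}
and since $K(\lambda)\to 0$ as $\lambda \to +\infty$ the relative bound is as small as we wish.

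I would then invoke the standard perturbation result for analytic semigroups: an $A$-bounded perturbation of relative bound zero leaves $A + B_y$ sectorial on the same domain $D(A)$, so $A + B_y$ generates an analytic semigroup $\left\{T_{A+B_y}(t)\right\}_{t\geq 0}$. Concretely, for $\lambda$ in the sector with $\vert \lambda\vert$ large one has the factorization
\begin{equation*}
\lambda I - \left(A + B_y\right) = \left(I - B_y (\lambda I - A)^{-1}\right)(\lambda I - A),
\end{equation*}
and since $\Vert B_y(\lambda I - A)^{-1}\Vert \leq K(\lambda) \leq K(\alpha_0) < 1$, the first factor is invertible by Neumann series, so that
\begin{equation*}
\left(\lambda I - (A + B_y)\right)^{-1} = (\lambda I - A)^{-1}\left(I - B_y(\lambda I - A)^{-1}\right)^{-1}
\end{equation*}
inherits a sectorial bound $M'/\vert \lambda\vert$ from $A$.

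For existence and representation of the mild solution I would carry out the Banach fixed point argument already set up before the statement. On $L^1_{\alpha_0}\left(\left(0,\infty\right); \Li^1\left(\R^2\right)\right)$ define
\begin{equation*}
\mathcal{F}(w)(t) = B_y T_{\varepsilon^2 \bigtriangleup_x}(t) v_0 + \int_0^t B_y T_{\varepsilon^2 \bigtriangleup_x}(t - \sigma) w(\sigma)\, \d\sigma .
\end{equation*}
The inhomogeneous term lies in the space since $\int_0^\infty e^{-\alpha_0 t}\Vert B_y T_{\varepsilon^2\bigtriangleup_x}(t)v_0\Vert \, \d t \leq K(\alpha_0)\Vert v_0\Vert$ by \eqref{3.19}; and by Fubini's theorem with \eqref{3.19} the convolution part has weighted norm at most $K(\alpha_0)\Vert w\Vert_{L^1_{\alpha_0}}$, so $\mathcal{F}$ is a contraction with constant $K(\alpha_0)<1$. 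Banach's theorem yields the unique $w$ solving \eqref{3.22}, and $v(t) = T_{\varepsilon^2\bigtriangleup_x}(t)v_0 + \int_0^t T_{\varepsilon^2\bigtriangleup_x}(t-\sigma)w(\sigma)\, \d\sigma$ gives the stated representation. A final check that $\int_0^t v(s)\,\d s \in D(A)$ with $v(t) = v_0 + (A+B_y)\int_0^t v(s)\,\d s$ identifies $v$ with the mild solution of Definition \ref{DE3.8} and with $T_{A+B_y}(t)v_0$.

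The main obstacle I anticipate is the analyticity claim rather than mere generation. The fixed point estimate only uses real $\alpha>0$, whereas analyticity requires carrying the smallness $\Vert B_y(\lambda I - A)^{-1}\Vert < 1$ into a full sector $\vert \arg \lambda\vert < \pi/2 + \delta$ with $\vert \lambda\vert$ large. This forces one to re-derive the kernel estimate behind \eqref{3.19}, hence \eqref{3.21}, for complex $\lambda$ in the sector, using the analytic continuation of the Gaussian kernel; this is precisely the point where the argument of \cite{DPM} is needed, the rest being routine convolution and Fubini estimates.
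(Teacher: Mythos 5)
Your proposal is correct and follows essentially the same route as the paper: the paper sets up exactly the same fixed-point problem in $L^1_{\alpha_0}\left(\left(0,\infty\right);\Li^1\left(\R^2\right)\right)$ using the estimate \eqref{3.19}, and then delegates the analyticity and the identification of the mild solution to the perturbation theory of Ducrot--Magal--Prevost \cite[Theorem 4.8]{DPM}, which is precisely the reference you identify as carrying the sector estimates. One small remark: the obstacle you flag at the end is less serious than you suggest, since your relative-bound-zero inequality $\Vert B_y u\Vert \leq K(\lambda)\Vert Au\Vert + \lambda K(\lambda)\Vert u\Vert$ is a statement on $D(A)$ that does not involve the spectral parameter, so combined with the sectorial resolvent bound for $A$ it already yields $\Vert B_y(\lambda I - A)^{-1}\Vert<1$ on a full sector for $\vert\lambda\vert$ large, without revisiting the Gaussian kernel for complex $\lambda$.
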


\medskip 
Let $\lambda \geq \alpha_0$. Since $B_y$ is a closed linear operator, we have 
$$
B_y  \left( \lambda I - A \right)^{-1}=\int_{0}^{\infty}e^{-\lambda t} B_y  T_{ \varepsilon^2 \bigtriangleup_x  }(t)dt, 
$$
and 
$$
\Vert B_y  \left( \lambda I - A \right)^{-1} \Vert_{\L \left(L^1 \left(\R^2\right) \right)} \leq  \kappa \int_{0}^{\infty} e^{-\lambda \sigma} \left(  \dfrac{1}{\sqrt{\varepsilon^2t}} +1  \right)  \d \sigma= K\left(\lambda \right) \leq K\left(\alpha_0\right)  <1. 
$$
Let $u \in D(A)$ and $v \in  \Li^1\left(\R^2\right)$. We have 
$$
\begin{array}{rl}
	\left( \lambda I -A-B_y \right)u=v  \Leftrightarrow& \left[ I -B_y  \left( \lambda I - A \right)^{-1}  \right]   \left( \lambda I - A \right)u=v \\
	\Leftrightarrow&u=  \left( \lambda I - A \right)^{-1} \left[ I -B_y  \left( \lambda I - A \right)^{-1}  \right]^{-1}v 
\end{array}
$$ 
We obtain the following lemma. 
\begin{lemma} \label{LE3.10}	Let Assumption \ref{ASS3.3} be satisfied. We have   
	$$
	(\alpha_0 ,+\infty ) \subset  \rho (A+B_y),
	$$ 
	the resolvent set of $A+B_y$, and for each $\lambda > \alpha_0$, 
	$$
	\left( \lambda I -A-B_y \right)u=v  \Leftrightarrow	u=  \left( \lambda I - A \right)^{-1} \sum_{k \geq 0} \left[ B_y  \left( \lambda I - A \right)^{-1}  \right]^{k}v . 
	$$
\end{lemma}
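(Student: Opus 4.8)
The plan is to turn the resolvent equation $\left(\lambda I-A-B_y\right)u=v$ into an invertibility statement for the bounded operator $I-B_y\left(\lambda I-A\right)^{-1}$ on $\Li^1\left(\R^2\right)$ and to invert it by a Neumann series. Two ingredients already proved drive everything: first, Proposition \ref{PROP3.7} shows that $\left(\lambda I-A\right)^{-1}$ maps $\Li^1\left(\R^2\right)$ into $D(B_y)$, so that $B_y\left(\lambda I-A\right)^{-1}$ is a genuine bounded operator on $\Li^1\left(\R^2\right)$ and $A+B_y$ is well defined on $D(A)$; second, the estimate displayed after Theorem \ref{TH3.9} gives $\Vert B_y\left(\lambda I-A\right)^{-1}\Vert_{\L\left(\Li^1\left(\R^2\right)\right)}\le K(\lambda)\le K(\alpha_0)<1$ for every $\lambda>\alpha_0$.

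First I would fix $\lambda>\alpha_0$ and record the factorization already written just before the statement. For $u\in D(A)$ one may write $B_y u=B_y\left(\lambda I-A\right)^{-1}\left(\lambda I-A\right)u$, whence
$$
\left(\lambda I-A-B_y\right)u=\left[I-B_y\left(\lambda I-A\right)^{-1}\right]\left(\lambda I-A\right)u.
$$
Because $\lambda\in\rho(A)$, the map $u\mapsto\left(\lambda I-A\right)u$ is a bijection from $D(A)$ onto $\Li^1\left(\R^2\right)$; hence solving $\left(\lambda I-A-B_y\right)u=v$ is equivalent to inverting the bounded operator $I-B_y\left(\lambda I-A\right)^{-1}$ on $\Li^1\left(\R^2\right)$.

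Next, since its norm is strictly below $1$, the Neumann series $\sum_{k\ge0}\left[B_y\left(\lambda I-A\right)^{-1}\right]^k$ converges absolutely in $\L\left(\Li^1\left(\R^2\right)\right)$ and is the bounded inverse of $I-B_y\left(\lambda I-A\right)^{-1}$. Composing with $\left(\lambda I-A\right)^{-1}$ yields the unique solution
$$
u=\left(\lambda I-A\right)^{-1}\sum_{k\ge0}\left[B_y\left(\lambda I-A\right)^{-1}\right]^k v,
$$
which lies in $D(A)$ as a value of $\left(\lambda I-A\right)^{-1}$. Thus $\left(\lambda I-A-B_y\right)$ is a bijection from $D(A)$ onto $\Li^1\left(\R^2\right)$ with bounded inverse, so $\lambda\in\rho(A+B_y)$; since $\lambda>\alpha_0$ was arbitrary, $(\alpha_0,+\infty)\subset\rho(A+B_y)$, and the claimed equivalence is exactly this computation.

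This is a standard bounded-perturbation/Neumann-series argument, so I do not anticipate a real obstacle; the substance has been front-loaded into the earlier results. The only points that require attention are that $B_y\left(\lambda I-A\right)^{-1}$ be a true element of $\L\left(\Li^1\left(\R^2\right)\right)$ and not merely densely defined—guaranteed by Proposition \ref{PROP3.7}, itself resting on the smoothing estimate \eqref{3.19} of Lemma \ref{LE3.6}—and that the factorization be valid on all of $D(A)$, which needs precisely the inclusion $D(A)\subset D(B_y)$. I would also note that $K(\lambda)\le K(\alpha_0)$ holds uniformly for $\lambda\ge\alpha_0$, so the resolvent is uniformly bounded on $(\alpha_0,\infty)$, a fact consistent with the analyticity already asserted in Theorem \ref{TH3.9}.
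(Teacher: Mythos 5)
Your proof is correct and follows essentially the same route as the paper: the factorization $\left(\lambda I -A-B_y\right)=\left[I-B_y\left(\lambda I-A\right)^{-1}\right]\left(\lambda I-A\right)$ on $D(A)$, the bound $\Vert B_y\left(\lambda I-A\right)^{-1}\Vert_{\L\left(\Li^1\left(\R^2\right)\right)}\le K(\lambda)\le K(\alpha_0)<1$, and the Neumann series inversion. One small remark: the inclusion you actually use, namely that $\left(\lambda I-A\right)^{-1}$ maps $\Li^1\left(\R^2\right)$ into $D(B_y)$ (equivalently $D(A)\subset D(B_y)$), is indeed what the proof of Proposition \ref{PROP3.7} establishes and what the factorization requires, even though the display \eqref{3.20} states the inclusion in the opposite direction.
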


\subsection{Positivity of the solutions  for the full problem  with both diffusion and convection}
In this section, we reconsider the positivity of the solutions by using only abstract argument.  Such a problem was study by Protter and Weinberger \cite{Protter-Weinberger} by using maximum principle. Here we use the fact that $A$ and $B_y$ are both the infinitesimal generator of positive semi-groups, together with some suitable estimation on $B_y T_{A}(t), \forall t>0$.  

\medskip 
Recall that the Hille-Yosida approximation of $B_y$ is defined by  
\begin{equation} \label{3.23}
	B^\lambda_y=\lambda B_y\left(\lambda I - B_y\right)^{-1} , \forall \lambda >0. 	
\end{equation}
Then we have 
\begin{equation} \label{3.24}
	B^\lambda_y=- \lambda I+ \lambda^2 \left(\lambda I - B_y\right)^{-1} , \forall \lambda >0. 	
\end{equation}
Recall that 
$$
\lim_{\lambda \to + \infty} \lambda \left(\lambda I - B_y\right)^{-1}u =u,\, \forall u \in \Li^1\left(\R^2\right), 
$$
we deduce that   
$$
\lim_{\lambda \to + \infty} B^\lambda_y u = B_y u,\, \forall u \in D\left(B_y\right). 
$$
The idea of this section is to approximate the problem \eqref{3.15}
\begin{equation*} 
	v(t)=T_{ \varepsilon^2 \bigtriangleup_x  }(t)v_0+ \int_0^t T_{ \varepsilon^2 \bigtriangleup_x  }(t-\sigma)B_y v(\sigma ) \d\sigma.
\end{equation*}
by using the Hille-Yosida approximation of $B_y$. That is, 
\begin{equation*} 
	v_\lambda(t)=T_{ \varepsilon^2 \bigtriangleup_x  }(t)v_0+ \int_0^t T_{ \varepsilon^2 \bigtriangleup_x  }(t-\sigma)B^\lambda_y v_\lambda(\sigma ) \d\sigma.
\end{equation*}
\subsubsection{Convergence of the approximation}
Let $v_0 \in D(A)$. Define 
$$
w_\lambda(t)=B^\lambda_y v_\lambda(t),
$$
which satisfies 
\begin{equation*} 
	w_\lambda(t)=B^\lambda_y T_{ \varepsilon^2 \bigtriangleup_x  }(t)v_0+ \int_0^t B^\lambda_y T_{ \varepsilon^2 \bigtriangleup_x  }(t-\sigma) w_\lambda (\sigma ) \d\sigma.
\end{equation*}
By computing the difference between the above equation and \eqref{3.22}, we obtain 
\begin{equation} \label{3.25}
	\begin{array}{ll}
		w_\lambda(t)-	w(t) &=	 \left[  \lambda \left(\lambda I - B_y\right)^{-1} - I\right] B_y T_{ \varepsilon^2 \bigtriangleup_x  }(t)v_0 \vspace{0.2cm}\\
		&+\displaystyle   \int_0^t \left[  \lambda \left(\lambda I - B_y\right)^{-1} - I\right]  B_y	T_{ \varepsilon^2 \bigtriangleup_x  }(t-\sigma)w(\sigma ) \d\sigma \vspace{0.2cm}\\
		& +\displaystyle  \int_0^t   \lambda \left(\lambda I - B_y\right)^{-1}   B_y  T_{ \varepsilon^2 \bigtriangleup_x  }(t-\sigma)\left( w_\lambda(\sigma )- w( \sigma) \right) \d\sigma,\\	
	\end{array}
\end{equation}
Let $\tau>0$. By using the fact that $t \to  B_y T_{ \varepsilon^2 \bigtriangleup_x  }(t)w_0 $ maps bounded interval $\left[0, \tau \right]$ into a compact subset of $ \Li^1\left(\R^2\right)$, and $(t,\sigma ) \to  B_y T_{ \varepsilon^2 \bigtriangleup_x  }(t-\sigma)w(\sigma)$ maps bounded subsets $\left\{ (t, \sigma) \in \left[0, \tau \right] : t \geq \sigma  \right\}$ into a compact subset of $ \Li^1\left(\R^2\right)$, we deduce that 
\begin{equation}
	\lim_{\lambda \to+\infty}\sup_{t \in [0, \tau]} \Vert \left[  \lambda \left(\lambda I - B_y\right)^{-1} - I\right] B_y T_{ \varepsilon^2 \bigtriangleup_x  }(t)w_0  \Vert_{ \Li^1\left(\R^2\right) } =0, 	
\end{equation}
and 
\begin{equation}
	\lim_{\lambda \to+\infty}\sup_{t \in [0, \tau]} \Vert  \int_0^t \left[  \lambda \left(\lambda I - B_y\right)^{-1} - I\right]  B_yT_{ \varepsilon^2 \bigtriangleup_x  }(t-\sigma)w(\sigma ) \d\sigma \Vert_{ \Li^1\left(\R^2\right) }  =0.	
\end{equation}
Moreover we have 
$$
\Vert \lambda \left(\lambda I - B_y\right)^{-1}  \Vert_{\L\left( \Li^1\left(\R^2\right) \right) }\leq 1,  \forall \lambda >0, 
$$
hence 
\begin{equation}
	\begin{array}{r}
		\displaystyle  	\Vert   \int_0^t   \lambda \left(\lambda I - B_y\right)^{-1} B_y  T_{ \varepsilon^2 \bigtriangleup_x  }(t-\sigma)  \left( w_\lambda(\sigma )- w( \sigma)  \right)\d\sigma \Vert_{ \Li^1\left(\R^2\right) } \vspace{0.2cm}\\
		\leq 	\displaystyle  \kappa \int_{0}^{\tau}  \left(  \dfrac{1}{\sqrt{\varepsilon^2 \sigma}} +1  \right)  \d \sigma \displaystyle \sup_{\sigma \in [0, \tau ]}\Vert  w_\lambda(\sigma )- w( \sigma)\Vert_{ \Li^1\left(\R^2\right) } . 
		
	\end{array}
\end{equation}
\begin{lemma} \label{LE3.11}	Assume that $\mathbf{C_y}$ satisfies Assumption \ref{ASS3.3}.Let $\tau>0$ small enough to satisfy 
	$$
	\kappa \int_{0}^{\tau} \left(  \dfrac{1}{\sqrt{\varepsilon^2 \sigma  }} +1  \right)  \d \sigma <1. 
	$$
	Then for each $v_0 \in D(A)$, and each $\tau >0$, we have 
	$$
	\lim_{\lambda \to+\infty}\sup_{t \in [0, \tau]}\Vert v_\lambda(t)-v(t)\Vert_{ \Li^1\left(\R^2\right) }  =0.
	$$
	
\end{lemma}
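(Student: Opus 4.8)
The plan is to control the difference $\delta_\lambda(t) := w_\lambda(t) - w(t)$ directly from the decomposition \eqref{3.25}, and then to transfer the resulting bound from $w_\lambda$ to $v_\lambda$. First I would take the $\Li^1\left(\R^2\right)$-norm on both sides of \eqref{3.25} and pass to the supremum over $t \in [0,\tau]$. The first two terms on the right-hand side do not involve $\delta_\lambda$, and by the two limits displayed immediately before the statement each of them tends to $0$ uniformly in $t \in [0,\tau]$ as $\lambda \to +\infty$; I would write $\eta(\lambda)$ for the sum of these two suprema, so that $\eta(\lambda) \to 0$. By the third displayed estimate, the remaining self-referential term is bounded above by $C(\tau)\,\sup_{\sigma \in [0,\tau]}\Vert \delta_\lambda(\sigma)\Vert_{\Li^1\left(\R^2\right)}$, with $C(\tau) := \kappa \int_0^\tau \left( \frac{1}{\sqrt{\varepsilon^2 \sigma}} + 1\right)\d\sigma$.

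Next I would use the smallness hypothesis $C(\tau)<1$ to absorb the self-referential term. Setting $M_\lambda := \sup_{t\in[0,\tau]}\Vert \delta_\lambda(t)\Vert_{\Li^1\left(\R^2\right)}$, the previous step reads $M_\lambda \leq \eta(\lambda) + C(\tau)\,M_\lambda$, and since $C(\tau)<1$ this rearranges to $M_\lambda \leq \eta(\lambda)/(1-C(\tau))$. Letting $\lambda \to +\infty$ gives $M_\lambda \to 0$, that is $w_\lambda \to w$ uniformly on $[0,\tau]$.

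To pass from $w_\lambda$ to $v_\lambda$, I would subtract the mild representation of $v$ from that of $v_\lambda$; the initial-data terms cancel, leaving
\[
v_\lambda(t) - v(t) = \int_0^t T_{\varepsilon^2 \bigtriangleup_x}(t-\sigma)\left( w_\lambda(\sigma) - w(\sigma)\right)\d\sigma .
\]
Since $\left\{T_{\varepsilon^2 \bigtriangleup_x}(t)\right\}_{t\geq 0}$ is a contraction semigroup on $\Li^1\left(\R^2\right)$, this yields $\Vert v_\lambda(t)-v(t)\Vert_{\Li^1\left(\R^2\right)} \leq \int_0^t \Vert w_\lambda(\sigma)-w(\sigma)\Vert_{\Li^1\left(\R^2\right)}\,\d\sigma \leq \tau\,M_\lambda$, which tends to $0$ uniformly on $[0,\tau]$ by the previous paragraph. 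This establishes the claim on every interval $[0,\tau]$ that satisfies the smallness condition.

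Finally, to reach an arbitrary $\tau>0$ I would iterate over consecutive subintervals of a fixed length $\tau_0$ chosen with $C(\tau_0)<1$, restarting at each endpoint from the values of $v_\lambda$ and $v$ there; because both $A+B_y$ and $A+B^\lambda_y$ generate analytic semigroups (the latter a bounded perturbation of $A$), these endpoint values lie in $D(A)$, so the hypotheses are met at each stage, and the kernel keeps its singularity at the left endpoint of each subinterval, so the same constant $C(\tau_0)<1$ controls every step. The step I expect to need the most care — and the genuine obstacle — is exactly this self-referential term: since $1/\sqrt{\varepsilon^2\sigma}$ is singular at $\sigma=0$, the classical Gronwall lemma is not directly available, and the whole argument rests on the integrability of that kernel, which makes $C(\tau)$ finite and, for $\tau$ small, strictly less than $1$, legitimizing the absorption.
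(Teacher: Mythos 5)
Your core argument is exactly the one the paper intends: the paper's ``proof'' of this lemma is precisely the displayed material preceding it, namely the decomposition \eqref{3.25}, the two uniform-in-$t$ limits for the terms not involving $w_\lambda-w$, and the bound of the self-referential term by $\kappa\int_0^\tau\bigl(\tfrac{1}{\sqrt{\varepsilon^2\sigma}}+1\bigr)\d\sigma\cdot\sup_{\sigma\in[0,\tau]}\Vert w_\lambda(\sigma)-w(\sigma)\Vert$. The absorption step using $C(\tau)<1$ and the transfer to $v_\lambda-v$ through the contraction property of $T_{\varepsilon^2\bigtriangleup_x}(t)$ are left implicit in the paper, and you fill them in correctly; your remark that the singular but integrable kernel replaces a Gronwall argument is exactly the right way to read the smallness hypothesis.

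The one step that does not work as written is the iteration to arbitrary $\tau$. On the restarted interval the two evolutions no longer share the same initial datum: the difference of the ``free'' terms is $B_y^{\lambda}T_{\varepsilon^2\bigtriangleup_x}(s)v_\lambda(\tau_0)-B_yT_{\varepsilon^2\bigtriangleup_x}(s)v(\tau_0)$, which, after splitting off the piece $\bigl[\lambda(\lambda I-B_y)^{-1}-I\bigr]B_yT_{\varepsilon^2\bigtriangleup_x}(s)v(\tau_0)$ that you already control, leaves $\lambda(\lambda I-B_y)^{-1}B_yT_{\varepsilon^2\bigtriangleup_x}(s)\bigl(v_\lambda(\tau_0)-v(\tau_0)\bigr)$. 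By \eqref{3.19} this is only bounded by $\kappa\bigl(\tfrac{1}{\sqrt{\varepsilon^2 s}}+1\bigr)\Vert v_\lambda(\tau_0)-v(\tau_0)\Vert$, whose supremum over $s\in(0,\tau_0]$ is infinite for each fixed $\lambda$; knowing $v_\lambda(\tau_0)\in D(A)$ does not help, since the difference is only small in $\Li^1$, not in the graph norm. So the quantity $\sup_t\Vert w_\lambda(t)-w(t)\Vert$ is the wrong one to propagate across subintervals. The repair is routine but should be said: measure $w_\lambda-w$ on each subinterval in $\int_0^{\tau_0}\Vert\cdot\Vert\,\d s$ (or in a weighted supremum $\sup_s\sqrt{s}\,\Vert\cdot\Vert$), for which the offending term contributes at most $2\kappa\varepsilon^{-1}\sqrt{\tau_0}\,\Vert v_\lambda(\tau_0)-v(\tau_0)\Vert\to 0$ and the convolution term is still absorbed by a constant less than one; the time-integrated norm is also all that the final estimate on $v_\lambda-v$ requires.
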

\subsubsection{Positivity}
By using \eqref{3.24}, we deduce that 
\begin{equation} 
	\label{3.29}
	v_\lambda(t)=T_{ \varepsilon^2 \bigtriangleup_x-\lambda I  }(t)v_0+ \int_0^t  T_{ \varepsilon^2 \bigtriangleup_x -\lambda I }(t-\sigma)\lambda^2 \left(\lambda I - B_y\right)^{-1}   v_\lambda(\sigma ) \d\sigma,
\end{equation}
where 
$$
T_{ \varepsilon^2 \bigtriangleup_x-\lambda I  }(t)=e^{-\lambda t}T_{ \varepsilon^2 \bigtriangleup_x  }(t). 
$$
If $u_0 \in D(A) \cap \Li^1_+\left(\R^2\right)$, since $ \left(\lambda I - B_y\right)^{-1}  $ is a positive bounded linear operator, we deduce that 
\begin{equation}
	v_\lambda(t)\geq 0, \forall t \geq 0. 	
\end{equation}
To obtain the positivity is sufficient to use the fact that $ D(A) \cap \Li^1_+\left(\R^2\right)$ is dense in $   \Li^1_+\left(\R^2\right)$, which follows from the following observation 
$$
\lambda \left(\lambda I - A\right)^{-1}  v_0\in D(A)\cap  \Li^1_+\left(\R^2\right), \forall v_0 \in  \Li^1_+\left(\R^2\right), \forall \lambda >0, 
$$
and 
$$
\lim_{\lambda \to \infty} \lambda \left(\lambda I - A\right)^{-1}  v_0=v_0, \forall v_0 \in  \Li^1\left(\R^2\right).
$$
By using Lemma \ref{LE3.11}, we obtain the following theorem. 
\begin{theorem}[Positivity] \label{TH3.12} Assume that $\mathbf{C_y}$ satisfies Assumption \ref{ASS3.3}.
	For each $v_0 \in \Li^1_+\left(\R^2\right),$ the solution of Cauchy problem  \eqref{3.14} non-negative. That is 
	\begin{equation}
		T_{A+B_y}(t)v_0 \geq 0, \forall t \geq 0.		
	\end{equation}
\end{theorem}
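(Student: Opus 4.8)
The plan is to prove positivity first on the dense subset $D(A) \cap \Li^1_+\left(\R^2\right)$ by exploiting the Hille--Yosida approximation already introduced in \eqref{3.23}--\eqref{3.24}, and then to propagate the conclusion to the whole cone $\Li^1_+\left(\R^2\right)$ by density and continuity.

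First, fix $v_0 \in D(A) \cap \Li^1_+\left(\R^2\right)$ and read the reformulated approximate equation \eqref{3.29} as the fixed point $v_\lambda = \Phi_\lambda(v_\lambda)$ of the affine map $\Phi_\lambda(f)(t) = T_{\varepsilon^2 \bigtriangleup_x - \lambda I}(t)v_0 + \int_0^t T_{\varepsilon^2 \bigtriangleup_x - \lambda I}(t-\sigma)\,\lambda^2 \left(\lambda I - B_y\right)^{-1} f(\sigma)\,\d\sigma$. In contrast with the kernel appearing in Theorem \ref{TH3.9}, here the operator $\lambda^2 \left(\lambda I - B_y\right)^{-1}$ is \emph{bounded}, so $\Phi_\lambda$ is a contraction on the weighted space $L^1_{\alpha}\left(\left(0,\infty\right); \Li^1\left(\R^2\right)\right)$ for $\alpha$ large. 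The decisive observation is that every operator building $\Phi_\lambda$ preserves positivity: the rescaled diffusion semigroup satisfies $T_{\varepsilon^2 \bigtriangleup_x - \lambda I}(t) = e^{-\lambda t} T_{\varepsilon^2 \bigtriangleup_x}(t)$ and is positive by \eqref{3.3}, while $\lambda^2 \left(\lambda I - B_y\right)^{-1}$ is positive because $B_y$ generates the positive semigroup of Lemma \ref{LE3.5} and its resolvent is the Laplace transform of that semigroup. Hence $\Phi_\lambda$ maps the positive cone of the weighted space into itself, so the Picard iterates emanating from the non-negative term $t \mapsto T_{\varepsilon^2 \bigtriangleup_x - \lambda I}(t)v_0$ remain non-negative; since the cone is closed, the fixed point inherits $v_\lambda(t) \geq 0$ for all $t \geq 0$.

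Next, I would pass to the limit $\lambda \to +\infty$. By Lemma \ref{LE3.11}, for each $\tau>0$ one has $v_\lambda(t) \to v(t) = T_{A+B_y}(t)v_0$ uniformly on $[0,\tau]$, and since $\Li^1_+\left(\R^2\right)$ is closed while each $v_\lambda(t)$ is non-negative, the limit is non-negative. Thus $T_{A+B_y}(t)v_0 \geq 0$ for every $t \geq 0$ and every $v_0 \in D(A) \cap \Li^1_+\left(\R^2\right)$.

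Finally, I would remove the restriction $v_0 \in D(A)$. For an arbitrary $v_0 \in \Li^1_+\left(\R^2\right)$, the elements $v_0^\lambda := \lambda\left(\lambda I - A\right)^{-1} v_0$ lie in $D(A)$ by construction of the resolvent range and are non-negative, because $\left(\lambda I - A\right)^{-1} = \int_0^\infty e^{-\lambda t} T_{\varepsilon^2 \bigtriangleup_x}(t)\,\d t$ is positive by \eqref{3.3}; moreover $v_0^\lambda \to v_0$ in $\Li^1\left(\R^2\right)$ as $\lambda \to +\infty$. The previous step gives $T_{A+B_y}(t)v_0^\lambda \geq 0$, and since $T_{A+B_y}(t)$ is bounded we conclude $T_{A+B_y}(t)v_0 = \lim_{\lambda \to +\infty} T_{A+B_y}(t)v_0^\lambda \geq 0$, again by closedness of the cone, which establishes the theorem. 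The only genuinely delicate point is the first step: confirming that the Volterra iteration preserves positivity within the $L^1_{\alpha}$ framework. This, however, reduces entirely to the positivity of the two operator families displayed above, both already available, combined with the closedness of $\Li^1_+\left(\R^2\right)$.
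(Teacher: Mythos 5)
Your proposal is correct and follows essentially the same route as the paper: the Hille--Yosida approximation $B_y^\lambda=-\lambda I+\lambda^2\left(\lambda I-B_y\right)^{-1}$, positivity of the approximate solutions via the rewritten Volterra equation \eqref{3.29} with the positive operators $e^{-\lambda t}T_{\varepsilon^2 \bigtriangleup_x}(t)$ and $\lambda^2\left(\lambda I-B_y\right)^{-1}$, passage to the limit by Lemma \ref{LE3.11}, and finally density of $D(A)\cap \Li^1_+\left(\R^2\right)$ in $\Li^1_+\left(\R^2\right)$ via $\lambda\left(\lambda I-A\right)^{-1}v_0$. You merely make explicit two points the paper leaves implicit, namely the Picard-iteration argument for why the fixed point stays in the closed cone and the Laplace-transform justification for the positivity of the resolvent of $B_y$.
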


As a consequence of Theorem \ref{TH3.12}, we obtain an  abstract proof of the result of Protter-Weinberger \cite{Protter-Weinberger}. 
\begin{corollary} Assume that $\mathbf{C}:\R^2 \to \R^2$ satisfies Assumption \ref{ASS3.3}. Let $\chi: \R^2 \to \R $   be bounded and uniformly continuous map. Consider the system  
	\begin{equation}  \label{3.32}
		\left\{ 
		\begin{array}{rl}
			\partial_t v(t,x)&= \varepsilon^2 \bigtriangleup_x v(t,x)  \vspace{0.2cm} \\
			& \quad  - \mathbf{C}(x) _1 \, \partial_{x_1} v(t,x) - \mathbf{C}(x)_2 \, \partial_{x_2} v(t,x)  \vspace{0.2cm} \\
			& \quad + \chi(x) v(t,x)   , \vspace{0.2cm} \\
			v(0,x)&=v_{0}(x) \in \Li^1_+\left(\R^2\right). 
		\end{array}	
		\right.
	\end{equation}

Then the system \eqref{3.32} has a unique non-negative mild solution. 
	
\end{corollary}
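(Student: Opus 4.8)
The plan is to recognise the generator appearing in \eqref{3.32} as a \emph{bounded multiplicative perturbation} of the operator $A+B_y$ already constructed, and then to transport both the generation statement of Theorem \ref{TH3.9} and the positivity statement of Theorem \ref{TH3.12} through that perturbation. First I would rewrite the first-order drift in divergence form. Taking $\mathbf{C_y}\equiv\mathbf{C}$ in the definition of $B_y$ and using the product rule (as in \eqref{3.17}),
$$
B_y v = -\mathbf{\nabla}_x\cdot\left(v\,\mathbf{C}\right)= -\mathbf{C}\cdot\mathbf{\nabla}_x v - v\,\mathbf{\nabla}_x\cdot\mathbf{C},
$$
so the non-divergence drift in \eqref{3.32} obeys
$$
-\mathbf{C}(x)_1\,\partial_{x_1}v-\mathbf{C}(x)_2\,\partial_{x_2}v = -\mathbf{C}\cdot\mathbf{\nabla}_x v = B_y v + v\,\mathbf{\nabla}_x\cdot\mathbf{C}.
$$
Hence the right-hand side of \eqref{3.32} equals $(A+B_y)v+Mv$, where $M$ is the multiplication operator
$$
\left(M v\right)(x)=m(x)\,v(x),\qquad m(x):=\mathbf{\nabla}_x\cdot\mathbf{C}(x)+\chi(x)=\partial_{x_1}\mathbf{C}(x)_1+\partial_{x_2}\mathbf{C}(x)_2+\chi(x).
$$
By Assumption \ref{ASS3.3}(iii) the divergence $\mathbf{\nabla}_x\cdot\mathbf{C}$ is bounded, and $\chi$ is bounded by hypothesis, so $m\in\Li^\infty\left(\R^2\right)$ and $M\in\L\left(\Li^1\left(\R^2\right)\right)$ with $\Vert M\Vert\le\Vert m\Vert_\infty$.

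Next I would invoke generation by bounded perturbation. Theorem \ref{TH3.9} provides that $A+B_y$, with domain $D(A)$, generates an analytic semigroup; since $M$ is bounded, the bounded-perturbation theorem shows that $A+B_y+M$, still on $D(A)$, generates a strongly continuous (indeed analytic) semigroup $\left\{T_{A+B_y+M}(t)\right\}_{t\ge0}$. The map $t\mapsto T_{A+B_y+M}(t)v_0$ is then the unique mild solution of \eqref{3.32} in the sense analogous to Definition \ref{DE3.8}, which settles existence and uniqueness.

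For positivity I would reduce to a \emph{positive} perturbation by rescaling. Since $m$ is bounded, choose $\omega>0$ with $m(x)+\omega\ge0$ for a.e.\ $x\in\R^2$, and split
$$
A+B_y+M=\left(A+B_y-\omega I\right)+\left(M+\omega I\right).
$$
The operator $A+B_y-\omega I$ generates $e^{-\omega t}T_{A+B_y}(t)$, which is positive by Theorem \ref{TH3.12}, while $M+\omega I$ is multiplication by the nonnegative bounded function $m+\omega$ and is therefore a positive bounded operator on $\Li^1\left(\R^2\right)$. The perturbed semigroup is then represented by the Dyson--Phillips series
$$
T_{A+B_y+M}(t)=\sum_{n\ge0}S_n(t),\quad S_0(t)=e^{-\omega t}T_{A+B_y}(t),\quad S_{n+1}(t)=\int_0^tS_0(t-\sigma)\left(M+\omega I\right)S_n(\sigma)\,\d\sigma,
$$
and each $S_n(t)$ is positive, being a composition and Bochner integral of positive operators; hence the sum is positive. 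Therefore $T_{A+B_y+M}(t)v_0\ge0$ for every $v_0\in\Li^1_+\left(\R^2\right)$ and every $t\ge0$.

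The algebraic rewriting is routine, so the substantive points are, first, that passing from divergence to non-divergence form produces only a bounded multiplier — this hinges on Assumption \ref{ASS3.3}(iii) and is exactly what keeps the perturbation inside $\L\left(\Li^1\right)$ rather than unbounded — and, second, that positivity survives the perturbation. The main obstacle is the positivity step: one must \emph{not} perturb directly by $M$, which need not preserve the positive cone, but by the rescaled positive operator $M+\omega I$, and then argue that the Dyson--Phillips series converges uniformly on compact time intervals so that positivity of every partial sum passes to the limit.
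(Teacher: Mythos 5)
Your proof is correct and follows essentially the same route as the paper: the paper likewise rewrites the non-divergence drift as $-\mathbf{\nabla}_x\cdot(v\,\mathbf{C})$ plus multiplication by the bounded function $\chi+\partial_{x_1}\mathbf{C}_1+\partial_{x_2}\mathbf{C}_2$, and then obtains positivity from Theorem \ref{TH3.12} via the variation-of-constants formula with a shift $\lambda I$ chosen large enough to make the multiplicative perturbation positive, which is exactly your $\omega$-rescaled Dyson--Phillips argument. The only difference is that you spell out the series and the boundedness of the multiplier in more detail than the paper does.
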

\begin{proof}
	It is sufficient to observe that the system \eqref{3.32} is equivalent to 
		\begin{equation*}  
		\left\{ 
		\begin{array}{rl}
			\partial_t v(t,x)&= \varepsilon^2 \bigtriangleup_x v(t,x) - \mathbf{\nabla}_x \cdot  \left(  v(t,x) \, \mathbf{C}(x) 	\right)   \vspace{0.2cm} \\
			&+ \left( \chi(x)+ \partial_{x_1}\mathbf{C}(x) _1  +\partial_{x_2}\mathbf{C}(x) _2  \right)v(t,x)   , \vspace{0.2cm} \\
			v(0,x)&=v_{0}(x) \in \Li^1_+\left(\R^2\right),
		\end{array}	
		\right.
	\end{equation*}
and the result follows from Theorem \ref{TH3.12}, and by using the variation of constant formula for $\lambda>0$ large enough
$$
v(t)= T_{A+B- \lambda I}(t)v_0+ \int_{0}^{t}T_{A+B- \lambda I}(t-\sigma) \left(L+ \lambda I\right)v\left(\sigma\right)\d \sigma, 
$$
where $L$ is the multiplicative operator 
$$
Lv(x)=  \left( \chi(x)+ \partial_{x_1}\mathbf{C}(x) _1  +\partial_{x_2}\mathbf{C}(x) _2  \right)v(x).
$$ 
\end{proof}

	We could obtain a stronger positivity result of $$	v(t,x)=T_{A+B_y}(t)(v_0)(x)$$ by using the strong maximum principle for a parabolic equation in the book of Gilbarg and Trudinger \cite{Gilbarg-Trudinger}. Alternatively, we could have used the Harnack inequality for second-order parabolic equations obtained by Ignatova, Kukavica, and Ryzhik	\cite{Ignatova-Kukavica-Ryzhik} to prove the strict positivity of the solution for all $t>0$ (by contradiction). But the simple arguments used above are sufficient to establish the convergence result of the entire system. 
\subsubsection{Conservation of the total mass of individuals}
Moreover by using again the formula \eqref{3.29}, we obtain 
\begin{equation*}
	\int_{\R^2}v_\lambda(t,x)dx =e^{-\lambda t}   	\int_{\R^2}v_0(x)dx+ \int_0^t  e^{-\lambda \left(t-s\right)}   \lambda 	\int_{\R^2}v_\lambda(s,x)dxds ,
\end{equation*}
that is 
\begin{equation*}
	\dfrac{d}{dt}	\int_{\R^2}v_\lambda(t,x)dx =-\lambda 	\int_{\R^2}v_\lambda(t,x)dx+ \lambda 	\int_{\R^2}v_\lambda(s,x)dx=0, 
\end{equation*}
therefore 
\begin{equation*}
	\int_{\R^2}v_\lambda(t,x)dx =	\int_{\R^2}v_0(x)dx, \forall t \geq 0. 
\end{equation*}
By using Lemma \ref{LE3.11}, we obtain the following theorem. 
\begin{theorem}[Conservation of the total mass of individuals] \label{TH3.14} Assume that $\mathbf{C_y}$ satisfies Assumption \ref{ASS3.3}.
	For each $v_0 \in \Li^1_+\left(\R^2\right)$, the Cauchy problem  \eqref{3.14}  conserves of the total mass of individuals. That is 
	\begin{equation*}
		\int_{\R^2}v(t,x)dx=	\int_{\R^2}v_0(x)dx, \forall t \geq 0, 
	\end{equation*}
	with  
	\begin{equation*}
		v(t)=T_{A+B_y}(t)v_0, \forall t \geq 0.		
	\end{equation*}
\end{theorem}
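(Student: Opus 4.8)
The plan is to run the argument through the Yosida approximation $B^\lambda_y$, exactly as in the positivity proof, and then let $\lambda\to+\infty$. The key structural observation is that every ingredient in the approximate mild equation \eqref{3.29}---the rescaled diffusion semigroup $T_{\varepsilon^2\bigtriangleup_x-\lambda I}(t)=e^{-\lambda t}T_{\varepsilon^2\bigtriangleup_x}(t)$ and the bounded operator $\lambda^2(\lambda I-B_y)^{-1}$---interacts cleanly with the mass functional $\phi\mapsto\int_{\R^2}\phi\,\dx$. First I would fix $v_0\in D(A)$ and record two identities. By the conservation property \eqref{3.4} of the diffusion semigroup, $\int_{\R^2}T_{\varepsilon^2\bigtriangleup_x-\lambda I}(t)\phi\,\dx=e^{-\lambda t}\int_{\R^2}\phi\,\dx$. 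Next, writing the resolvent as the Laplace transform $(\lambda I-B_y)^{-1}u=\int_0^\infty e^{-\lambda s}T_{B_y}(s)u\,\ds$ and invoking the mass conservation \eqref{3.11} of the convective semigroup, I obtain $\int_{\R^2}\lambda(\lambda I-B_y)^{-1}u\,\dx=\int_{\R^2}u\,\dx$ for every $u\in\Li^1(\R^2)$.

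With these two identities available, integrating \eqref{3.29} over $\R^2$ collapses it into a scalar Volterra equation for $M_\lambda(t):=\int_{\R^2}v_\lambda(t,x)\,\dx$, namely $M_\lambda(t)=e^{-\lambda t}M_\lambda(0)+\lambda\int_0^t e^{-\lambda(t-\sigma)}M_\lambda(\sigma)\,\d\sigma$, which is precisely the computation displayed just before the statement. Differentiating in $t$ cancels the two $\lambda$-contributions and gives $M_\lambda'(t)=0$, so that $M_\lambda(t)=M_\lambda(0)=\int_{\R^2}v_0\,\dx$ for every $t\geq0$ and every $\lambda>0$.

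The passage to the limit is where Lemma \ref{LE3.11} does the work: for $v_0\in D(A)$ it yields $v_\lambda(t)\to v(t)=T_{A+B_y}(t)v_0$ in $\Li^1(\R^2)$, uniformly on compact time intervals. Since the mass functional is continuous on $\Li^1(\R^2)$ (it is bounded by the norm), I may pass the limit inside the integral and conclude $\int_{\R^2}v(t,x)\,\dx=\int_{\R^2}v_0\,\dx$ for all $v_0\in D(A)$. I would then extend the identity to an arbitrary $v_0\in\Li^1_+(\R^2)$ by density: $D(A)$ is dense in $\Li^1(\R^2)$, the semigroup $T_{A+B_y}(t)$ is bounded on each bounded time interval (it is analytic by Theorem \ref{TH3.9}), and the mass functional is continuous, so the equality survives the approximation.

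I do not anticipate a serious obstacle, since the heavy lifting---the uniform convergence $v_\lambda\to v$---is already packaged in Lemma \ref{LE3.11}. The only step demanding genuine care is the Fubini interchange used to establish the mass invariance of $\lambda(\lambda I-B_y)^{-1}$; this is legitimate because the contraction bound \eqref{3.12} gives $\int_0^\infty e^{-\lambda s}\Vert T_{B_y}(s)u\Vert_{\Li^1(\R^2)}\,\ds\leq\lambda^{-1}\Vert u\Vert_{\Li^1(\R^2)}<\infty$, so the $\Li^1$-valued Bochner integral defining the resolvent is absolutely convergent and may be composed with the bounded mass functional.
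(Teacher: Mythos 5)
Your proposal is correct and follows essentially the same route as the paper: integrate the Yosida-approximated mild equation \eqref{3.29} over $\R^2$, use the mass invariance of the diffusion semigroup and of $\lambda(\lambda I-B_y)^{-1}$ to reduce to a scalar equation whose derivative vanishes, and then pass to the limit via Lemma \ref{LE3.11}. Your explicit justification of the resolvent mass identity through the Laplace transform of $T_{B_y}$ and the final density extension from $D(A)$ to $\Li^1_+\left(\R^2\right)$ are details the paper leaves implicit, but the argument is the same.
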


\section{Asymptotic behavior of the return-to-home model}
\label{Section4}
In this section, for simplicity, we drop the subscript $y$ notation, and we consider the entire system 
\begin{equation}
	\left\{ 
	\begin{array}{l}
		u'(t)=	\chi \int_{\R^2}w(t,x)\dx -\gamma \, u(t),\\
		v'(t,x)= \left(A_y+B_y \right)v(t,x)- \alpha \, v(t,x)+ \gamma \, g(x-y) \, u(t),\\
		w'(t,x)=\alpha\,  v(t,x)-\chi \, w(t,x),
	\end{array}
	\right. 
\end{equation}
with initial distribution 
\begin{equation}
	u(0)=u_0 \in \R_+, v(0)=v_0 \in L^1_+(\R^2), \text{ and } w(0)=w_0 \in L^1_+(\R^2). 
\end{equation}
\subsection{Abstract Cauchy problem}
We consider the space
$$
X=\R \times  \Li^1 \left(\R^2\right) \times \Li^1 \left(\R^2\right),
$$
which is a Banach space endowed with the standard produce norm 
$$
\Vert (u,v,w)\Vert= \vert u \vert + \Vert v \Vert_{ \Li^1\left(\R^2\right) }+\Vert w \Vert_{ \Li^1\left(\R^2\right) }. 
$$
We consider the positive cone of $X$ 
$$
X_+ =\R_+ \times  L^1_+ \left(\R^2\right) \times L^1_+ \left(\R^2\right).
$$
The system \eqref{2.1} we can rewritten as an abstract Cauchy problem 
\begin{equation} \label{4.1}
	\left(\begin{array}{c}
		u'(t)\\
		v'(t)\\
		w'(t)
	\end{array}\right)= \left( \mathcal{A}_y + \mathcal{C}_y  \right)	\left(\begin{array}{c}
		u(t)\\
		v(t)\\
		w(t)
	\end{array}\right), \text{ for } t >0,
\end{equation}
with initial value 
\begin{equation} \label{4.2}
	\left(\begin{array}{c}
		u(0)\\
		v(0)\\
		w(0)
	\end{array}\right)=	\left(\begin{array}{c}
		u_0\\
		v_0\\
		w_0
	\end{array}\right) \in \R \times  \Li^1 \left(\R^2\right) \times \Li^1 \left(\R^2\right).
\end{equation}
The linear operator $\mathcal{A}_y: D \left( \mathcal{A}_y \right)\subset X \to X $ is defined by 
$$
\mathcal{A}_y\left(\begin{array}{c}
	u \\
	v\\
	w
\end{array}\right)=\left(\begin{array}{c}
	-\gamma \, u \\
	\left(A+B_y  \right)v- \alpha \, v \\
	\alpha v  -\chi \, w
\end{array}\right)
$$
with the domain 
$$
D \left( \mathcal{A}_y \right)=\R \times D(A) \times \Li^1 \left(\R^2\right).
$$
The semigroup generated by $\mathcal{A}_y$ is explicitly given by 
\begin{equation} \label{4.5}
	T_{\mathcal{A}_y} (t) \left(\begin{array}{c}
		u_0 \\
		v_0\\
		w_0
	\end{array}\right)=\left(\begin{array}{c}
		e^{-\gamma t} \, u_0 \\
		T_{A+B_y -\alpha I}(t) v_0 \\
		e^{-\chi t}w_0+ \int_{0}^{t}e^{-\chi (t-s)}  \alpha  T_{A+B_y -\alpha I}(s) v_0 \d s 
	\end{array}\right). 	
\end{equation}
where 
$$
T_{A+B_y -\alpha I}(t)=e^{-\alpha t} T_{A+B_y }(t), \forall t \geq 0. 
$$
We also consider the compact bounded linear operator $\mathcal{C}_y: X  \to X,$ 
$$
\mathcal{C}_y\left(\begin{array}{c}
	u \\
	v\\
	w
\end{array}\right)=\left(\begin{array}{c}
	\chi \int_{\R^2}w(x)\dx \\
	\\
	+\gamma g(x-y) u    \vspace{0.2cm} \\
	0_{ \Li^1 \left(\R^2\right) } 
\end{array}\right).
$$

\begin{theorem}[Existence and uniqueness of solutions] \label{TH4.1} Assume that $\mathbf{C_y}$ satisfies Assumption \ref{ASS3.3}. Then for each $y \in \R^2$, the system \eqref{2.1} generates a strongly continuous semigroup $\left\{T_{\mathcal{A}_y+ \mathcal{C}_y}(t)\right\}_{t \geq 0}$  of bounded linear operator on $X$. We recall that 
	$$
	\left(\begin{array}{c}
		u(t) \\
		v(t)\\
		w(t)
	\end{array}\right)  =T_{\mathcal{A}_y+ \mathcal{C}_y}(t)\left(\begin{array}{c}
		u_0 \\
		v_0\\
		w_0
	\end{array}\right) 
	$$
	is the unique mild solution satisfies the variation of constant formula 
	\begin{equation}
		\left(\begin{array}{c}
			u(t) \\
			v(t)\\
			w(t)
		\end{array}\right) =T_{\mathcal{A}_y}(t)\left(\begin{array}{c}
			u_0 \\
			v_0\\
			w_0
		\end{array}\right) +  \int_{0}^{t} T_{\mathcal{A}_y}(t-\sigma)\,  \mathcal{C}_y\left(\begin{array}{c}
			u(\sigma) \\
			v(\sigma)\\
			w(\sigma)
		\end{array}\right)  \d \sigma, \forall t \geq 0, 
	\end{equation}
	or equivalently 
	\begin{equation}
		\left\{ 
		\begin{array}{rl}
			u(t)=&e^{-\gamma t} u_0+ \int_{0}^{t} e^{-\gamma \left(t- \sigma \right)} \chi \int_{\R^2}w(\sigma,x)\dx  \d \sigma, \\
			v(t)= &T_{A+B_y-\alpha I}(t)v_0+ \int_{0}^{t} T_{A+B_y-\alpha I}(t-\sigma) \gamma g(.-y) u(\sigma ) \d \sigma,\\
			w(t)=&	e^{-\chi t}w_0+ \int_{0}^{t}e^{-\chi (t-s)}  \alpha  v(t,\sigma) \d \sigma.  
		\end{array}
		\right. 
	\end{equation}
\end{theorem}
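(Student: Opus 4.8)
The plan is to treat \eqref{4.1} as a bounded perturbation of a triangular semigroup and then invoke the classical bounded perturbation theorem for $C_0$-semigroups (see, e.g., \cite{Engel-Nagel} or \cite{Magal-Ruan}). First I would split $\mathcal{A}_y=\mathcal{A}_0+\mathcal{B}$, where $\mathcal{A}_0=\mathrm{diag}\left(-\gamma I,\,A+B_y-\alpha I,\,-\chi I\right)$ carries the only unbounded entry $A+B_y$ in its middle slot, and $\mathcal{B}$ is the coupling $(u,v,w)\mapsto(0,0,\alpha v)$. By Theorem \ref{TH3.9} the operator $A+B_y-\alpha I$ generates the analytic (hence strongly continuous) semigroup $T_{A+B_y-\alpha I}(t)=e^{-\alpha t}T_{A+B_y}(t)$ on $\Li^1\left(\R^2\right)$, while the scalar entries generate $e^{-\gamma t}$ and $e^{-\chi t}$. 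Thus $\mathcal{A}_0$ generates the diagonal $C_0$-semigroup on $X$, and since $\mathcal{B}\in\L(X)$ (it is multiplication by $\alpha$ landing in the third coordinate), the perturbation theorem yields that $\mathcal{A}_y=\mathcal{A}_0+\mathcal{B}$ generates a $C_0$-semigroup with unchanged domain $D(\mathcal{A}_y)=\R\times D(A)\times\Li^1\left(\R^2\right)$, recalling $D(A+B_y)=D(A)$ from Proposition \ref{PROP3.7}. The explicit formula \eqref{4.5} for $T_{\mathcal{A}_y}(t)$ is then confirmed by a direct computation: the first two components are decoupled, and the third solves $w'(t)=\alpha v(t)-\chi w(t)$ by the scalar variation-of-constants formula, producing the stated convolution term; the identity at $t=0$, the semigroup law, and strong continuity are each inherited from the three building blocks.

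Next I would check that $\mathcal{C}_y$ is a bounded operator on all of $X$. Using $\int_{\R^2}g(x-y)\d x=1$ one obtains $\Vert\mathcal{C}_y(u,v,w)\Vert\leq\chi\,\Vert w\Vert_{\Li^1\left(\R^2\right)}+\gamma\,\vert u\vert$, so $\mathcal{C}_y\in\L(X)$ (it is in fact compact, as noted, but only boundedness is needed). Applying the bounded perturbation theorem a second time, now with generator $\mathcal{A}_y$ and bounded perturbation $\mathcal{C}_y$, gives a $C_0$-semigroup $\left\{T_{\mathcal{A}_y+\mathcal{C}_y}(t)\right\}_{t\geq0}$ on $X$ with the same domain. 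The same theorem supplies the Duhamel identity $T_{\mathcal{A}_y+\mathcal{C}_y}(t)=T_{\mathcal{A}_y}(t)+\int_0^t T_{\mathcal{A}_y}(t-\sigma)\,\mathcal{C}_y\,T_{\mathcal{A}_y+\mathcal{C}_y}(\sigma)\,\d\sigma$, which is exactly the asserted variation-of-constants formula; uniqueness of the mild solution follows from the Gronwall estimate underlying the contraction-mapping proof of that theorem. Substituting \eqref{4.5} and the definition of $\mathcal{C}_y$ into this abstract identity and reading off each coordinate then produces the three-line componentwise system.

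As for the main obstacle: there is essentially no analytic difficulty remaining, because the hard point—that $A+B_y$ generates a semigroup on $L^1\left(\R^2\right)$—is already secured in Theorem \ref{TH3.9}. The only steps demanding care are bookkeeping ones: verifying that both couplings (the term $\alpha v$ feeding the $w$-equation, and the operator $\mathcal{C}_y$) are genuinely bounded on the whole product space $X$, so that the generator's domain is preserved under perturbation, and checking the explicit semigroup formula \eqref{4.5} rather than merely asserting existence. Once these are in place, the existence, uniqueness, and variation-of-constants representation all follow mechanically from the bounded perturbation framework.
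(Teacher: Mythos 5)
Your proposal is correct and follows essentially the route the paper intends: the paper sets up $\mathcal{A}_y$ with the explicit semigroup \eqref{4.5} and the bounded (indeed compact) operator $\mathcal{C}_y$, and leaves the conclusion to the standard bounded perturbation theorem, which is exactly what you apply. Your additional decomposition $\mathcal{A}_y=\mathcal{A}_0+\mathcal{B}$ to justify \eqref{4.5}, and the explicit bound $\Vert\mathcal{C}_y(u,v,w)\Vert\leq \chi\Vert w\Vert_{\Li^1\left(\R^2\right)}+\gamma\vert u\vert$, simply fill in details the paper asserts without proof.
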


\noindent By Theorem \ref{TH3.12}, we have the following result. 
\begin{theorem}[Positivity] \label{TH4.2} The semigroup $\left\{T_{\mathcal{A}_y+ \mathcal{B}_y}(t)\right\}_{t \geq 0}$  is positive. That is 
	\begin{equation} \label{4.8}
		T_{\mathcal{A}_y+ \mathcal{B}_y}(t)  X_+  \subset X_+, \forall t \geq 0. 
	\end{equation}
\end{theorem}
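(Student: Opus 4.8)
The plan is to deduce positivity of $\left\{T_{\mathcal{A}_y+\mathcal{C}_y}(t)\right\}_{t\geq 0}$ from two separate facts: the unperturbed semigroup $\left\{T_{\mathcal{A}_y}(t)\right\}_{t\geq 0}$ is already positive, and the bounded perturbation $\mathcal{C}_y$ is a positive operator. Once both are in hand, the variation-of-constants structure established in Theorem \ref{TH4.1} propagates positivity to the full semigroup.

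First I would check that $T_{\mathcal{A}_y}(t)X_+\subset X_+$ by inspecting the explicit formula \eqref{4.5}. The first component $e^{-\gamma t}u_0$ is nonnegative whenever $u_0\geq 0$. For the second component, since $T_{A+B_y-\alpha I}(t)=e^{-\alpha t}T_{A+B_y}(t)$, Theorem \ref{TH3.12} gives $T_{A+B_y}(t)v_0\geq 0$ for $v_0\geq 0$, hence $T_{A+B_y-\alpha I}(t)v_0\geq 0$. The third component is the sum of $e^{-\chi t}w_0\geq 0$ and an integral whose integrand $e^{-\chi(t-s)}\alpha\,T_{A+B_y-\alpha I}(s)v_0$ is nonnegative (using $\chi,\alpha>0$ and the previous point), so it is nonnegative as well. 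This yields $T_{\mathcal{A}_y}(t)X_+\subset X_+$ for all $t\geq 0$.

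Next I would verify that $\mathcal{C}_y$ is a positive bounded linear operator. Given $(u,v,w)\in X_+$, its first component $\chi\int_{\R^2}w(x)\dx\geq 0$ since $\chi>0$ and $w\geq 0$; its second component $\gamma\,g(\cdot-y)\,u\geq 0$ since $\gamma>0$, $u\geq 0$, and $g\geq 0$ by the Gaussian formula \eqref{2.3}; its third component vanishes. Hence $\mathcal{C}_yX_+\subset X_+$.

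Finally I would combine the two facts through the Dyson--Phillips expansion underlying Theorem \ref{TH4.1}. Setting $S_0(t)=T_{\mathcal{A}_y}(t)$ and $S_{n+1}(t)=\int_0^t T_{\mathcal{A}_y}(t-\sigma)\,\mathcal{C}_y\,S_n(\sigma)\,\d\sigma$, one has $T_{\mathcal{A}_y+\mathcal{C}_y}(t)=\sum_{n\geq 0}S_n(t)$, the series converging in operator norm uniformly on compact time intervals because $\mathcal{C}_y$ is bounded. By induction every $S_n(t)$ is positive: $S_0(t)$ is positive by the first step, and if $S_n(\sigma)$ is positive then $S_{n+1}(t)$ is an integral of compositions of the positive operators $T_{\mathcal{A}_y}(t-\sigma)$, $\mathcal{C}_y$, and $S_n(\sigma)$, hence positive. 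Since the cone $X_+$ is closed, the norm limit of the positive partial sums stays in $X_+$, giving $T_{\mathcal{A}_y+\mathcal{C}_y}(t)X_+\subset X_+$ for all $t\geq 0$. I do not anticipate a genuine obstacle here; the only point needing care is the nonnegativity of the $w$-component in the first step, which rests entirely on Theorem \ref{TH3.12}, while the remainder is bookkeeping on the perturbation series.
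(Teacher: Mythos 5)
Your proposal is correct and follows the same route the paper intends: the paper simply states that Theorem \ref{TH4.2} follows from Theorem \ref{TH3.12}, and your argument (positivity of $T_{\mathcal{A}_y}$ read off from the explicit formula \eqref{4.5} via Theorem \ref{TH3.12}, positivity of the bounded perturbation $\mathcal{C}_y$, and the Dyson--Phillips series with closedness of the cone $X_+$) is exactly the standard elaboration of that one-line citation. The only cosmetic point is that the operator in the statement is written $\mathcal{B}_y$ but is the $\mathcal{C}_y$ of Theorem \ref{TH4.1}, as you correctly assumed.
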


\noindent By using Theorem \ref{TH3.14} we have the following result. 
\begin{theorem}[Conservation of the total mass] \label{TH4.3} Define 
	$$
	V(t)=\int_{\R^2} v(t,x)\d x, \text{ and   } W(t)=\int_{\R^2} w(t,x)\d x.
	$$
	Then $t \mapsto (u(t),V(t),W(t))$ satisfies the system of linear ordinary differential equations 
	\begin{equation} \label{4.9}
		\left\{ 
		\begin{array}{rl}
			u'(t)=&	\; \;\;  \chi W(t) -\gamma u(t),\\
			V'(t)= &- \alpha V(t)+ \gamma u(t),\\
			W'(t)=& \; \;\; \alpha V(t)-\chi W(t),
		\end{array}
		\right. 
	\end{equation}
	with initial distribution 
	\begin{equation} \label{4.10}
		u(0)=u_0, V(0)=\int_{\R^2} v_0(x) \d x, \text{ and } W(0)=\int_{\R^2} w_0(x) \d x.
	\end{equation}
	The density of individuals per house remains constant with time. That is 
	\begin{equation}\label{4.11}
		n(y)=	u(t)+ \int_{\R^2}v(t,x)\dx+  \int_{\R^2}w(t,x)\dx, \forall t \geq 0, \forall y \in \R^2, 	
	\end{equation}
	where $n(y)$ is the density of home in $\R^2$. 	
\end{theorem}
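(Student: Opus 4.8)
The plan is to reduce the claim to an elementary computation by pushing the scalar functional $\Lambda : \Li^1(\R^2) \to \R$, $\Lambda(f) = \int_{\R^2} f(x)\, dx$, through the variation-of-constants formula of Theorem \ref{TH4.1}. Since $|\Lambda(f)| \leq \|f\|_{\Li^1(\R^2)}$, the functional $\Lambda$ (integration against the constant function $1 \in L^\infty$) is bounded, hence continuous, and therefore commutes both with the Bochner integrals appearing in the mild formulation and with the strongly continuous semigroups involved. This is what lets me integrate the system in space without ever integrating the unbounded generator $A + B_y$ pointwise, so that $\Lambda \bigl((A+B_y)v\bigr)$ never has to be computed directly.

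First I would apply $\Lambda$ to the three components of the mild solution. For the $u$-component this is immediate and yields the integral form $u(t) = e^{-\gamma t} u_0 + \int_0^t e^{-\gamma(t-\sigma)} \chi\, W(\sigma)\, d\sigma$, since $\Lambda(w(\sigma)) = W(\sigma)$ by definition. For the $v$-component I would use $T_{A+B_y-\alpha I}(t) = e^{-\alpha t} T_{A+B_y}(t)$ together with the mass-conservation identity $\Lambda(T_{A+B_y}(t)\phi) = \Lambda(\phi)$ furnished by Theorem \ref{TH3.14}, and the normalization $\int_{\R^2} g(x-y)\,dx = 1$ from the model setup; this collapses $\Lambda$ applied to the $v$-formula to $V(t) = e^{-\alpha t} V(0) + \int_0^t e^{-\alpha(t-\sigma)}\gamma\, u(\sigma)\, d\sigma$. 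The $w$-component is handled identically, giving $W(t) = e^{-\chi t} W(0) + \int_0^t e^{-\chi(t-\sigma)} \alpha\, V(\sigma)\, d\sigma$.

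These three integral identities are precisely the variation-of-constants (mild) form of the linear ODE system \eqref{4.9} with initial data \eqref{4.10}. Because their right-hand sides are continuous in $t$ (the solution components are continuous by Theorem \ref{TH4.1}), the map $t \mapsto (u(t), V(t), W(t))$ is in fact $C^1$ and solves \eqref{4.9} in the classical sense, with uniqueness being the standard uniqueness for a finite-dimensional linear system. Finally, for the stated conservation \eqref{4.11}, I would set $N(t) = u(t) + V(t) + W(t)$ and differentiate using \eqref{4.9}: the terms cancel in pairs ($\chi W$ against $-\chi W$, $\gamma u$ against $-\gamma u$, $\alpha V$ against $-\alpha V$), so $N'(t) \equiv 0$ and $N(t) = N(0) = u_0 + V(0) + W(0) = n(y)$ by the definition \eqref{2.4} of $n(y)$. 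The only delicate point is the interchange in the second paragraph --- justifying that $\Lambda$ passes through the Bochner integral and the semigroup, and that the scaled semigroup $T_{A+B_y-\alpha I}$ inherits the mass conservation of $T_{A+B_y}$ up to the explicit factor $e^{-\alpha t}$ --- but this follows directly from the boundedness of $\Lambda$ and from Theorem \ref{TH3.14}.
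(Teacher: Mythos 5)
Your proposal is correct and follows essentially the same route the paper intends: the paper offers no detailed proof, merely noting that the result follows from Theorem \ref{TH3.14} by integrating the $v$- and $w$-equations in $x$, which is exactly the reduction you carry out via the functional $\Lambda$ applied to the variation-of-constants formula of Theorem \ref{TH4.1} (using $\int_{\R^2} g(x-y)\,\dx=1$ and the mass invariance of $T_{A+B_y}(t)$). Your final step, differentiating $N(t)=u(t)+V(t)+W(t)$ to get \eqref{4.11}, matches the paper's use of the conserved quantity in \eqref{2.4}.
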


\begin{definition}\label{DE4.4}
	{\rm Let $T\in \mathcal{L}\left( X\right).$  Then the \textit{essential semi-norm} $\left\Vert T\right\Vert _{\rm ess}$ of $T$ is defined by
		\begin{equation*}
			\left\Vert T\right\Vert _{\rm ess}=\kappa \left( T\left( B_{X}(0,1)\right)\right), 
		\end{equation*}%
		where $B_{X}\left( 0,1\right) =\left\{ x\in X:\left\Vert
		x\right\Vert _{X}\leq 1\right\} ,$ and for each bounded set $%
		B\subset X$,
		\begin{equation*}
			\kappa \left( B\right) =\inf \left\{ \varepsilon >0:B\text{ can be covered
				by a finite number of balls of radius }\leq \varepsilon \right\}
		\end{equation*}%
		is the \textit{Kuratovsky measure of non-compactness}.}
\end{definition}

By using, Webb \cite{Webb79} (see Magal and Thieme \cite[Theorem 3.2.]{Magal-Thieme} for more results), we deduce that 
$$
x \mapsto  \int_{0}^{t} T_{\mathcal{A}_y}(t-\sigma)  \mathcal{C}_yT_{\mathcal{A}_y+ \mathcal{C}_y}(\sigma)x  \d \sigma, 
$$
is compact. Therefore, we obtain the following lemma. 

\medskip 
In  the following lemma, we are using  the essential growth rate of semigroup, we refer to Engel and Nagel \cite{Engel-Nagel}, or Magal and Ruan \cite{Magal-Ruan} for more results on this topic.  
\begin{lemma} \label{LE4.5} Assume that $\mathbf{C_y}$ satisfies Assumption \ref{ASS3.3}.  The essential growth rate 
	$$
	\omega_{ess} \left(\mathcal{A}_y + \mathcal{C}_y \right):=\lim_{t \to \infty} \Vert T_{\mathcal{A}_y + \mathcal{C}_y }(t) \left(B_X \left(0,1\right)\right)\Vert _{\rm ess} \leq -\min(\gamma,\alpha,\chi).
	$$
	
\end{lemma}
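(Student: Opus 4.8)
The plan is to first use the compactness of the bounded perturbation $\mathcal{C}_y$ to replace $\mathcal{A}_y+\mathcal{C}_y$ by $\mathcal{A}_y$, and then to read off the essential growth rate of $\mathcal{A}_y$ directly from its explicit triangular semigroup \eqref{4.5}. First I would observe that the range of $\mathcal{C}_y$ is contained in $\R\times\mathrm{span}\{g(\cdot-y)\}\times\{0_{\Li^1}\}$, so $\mathcal{C}_y$ has rank at most two and is in particular compact. By the variation of constants formula of Theorem \ref{TH4.1} together with the result of Webb \cite{Webb79} (see Magal and Thieme \cite[Theorem 3.2]{Magal-Thieme}) already invoked above, the operator $x\mapsto\int_0^t T_{\mathcal{A}_y}(t-\sigma)\,\mathcal{C}_y T_{\mathcal{A}_y+\mathcal{C}_y}(\sigma)x\,\d\sigma$ is compact for each $t\geq 0$. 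Since adding a compact operator leaves the Kuratovsky measure of non-compactness of Definition \ref{DE4.4} unchanged, one gets $\Vert T_{\mathcal{A}_y+\mathcal{C}_y}(t)\Vert_{\rm ess}=\Vert T_{\mathcal{A}_y}(t)\Vert_{\rm ess}$ for all $t\geq 0$, hence $\omega_{ess}(\mathcal{A}_y+\mathcal{C}_y)=\omega_{ess}(\mathcal{A}_y)$. It therefore suffices to show $\omega_{ess}(\mathcal{A}_y)\leq-\min(\gamma,\alpha,\chi)$.

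Next I would exploit the block-triangular structure of \eqref{4.5}. Writing $P_1,P_2,P_3$ for the canonical projections of $X=\R\times\Li^1(\R^2)\times\Li^1(\R^2)$ onto its three factors and using the subadditivity of $\kappa$, one has $\kappa\big(T_{\mathcal{A}_y}(t)B_X(0,1)\big)\leq\sum_{i=1}^{3}\kappa\big(P_iT_{\mathcal{A}_y}(t)B_X(0,1)\big)$. The first piece $P_1T_{\mathcal{A}_y}(t)=e^{-\gamma t}P_1$ takes values in the one-dimensional space $\R$, hence is compact and contributes $0$. For the second piece, $P_2T_{\mathcal{A}_y}(t)=e^{-\alpha t}T_{A+B_y}(t)P_2$; by Theorems \ref{TH3.12} and \ref{TH3.14} the semigroup $T_{A+B_y}$ is positive and mass-preserving, so splitting an arbitrary datum into positive and negative parts gives $\Vert T_{A+B_y}(t)\Vert_{\L(\Li^1)}\leq1$, and this piece contributes at most $C e^{-\alpha t}$ via $\kappa(TB)\leq\Vert T\Vert\,\kappa(B)$. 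The diagonal part $e^{-\chi t}P_3$ of the third piece contributes at most $C e^{-\chi t}$.

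The hard part is the off-diagonal term in the $w$-component, namely the operator $\Phi_t\colon v_0\mapsto\int_0^t e^{-\chi(t-s)}\alpha\,T_{A+B_y-\alpha I}(s)v_0\,\d s$, since the underlying diffusion--convection semigroup on the unbounded domain $\R^2$ is not compact, so $\Phi_t$ itself is not compact and cannot simply be discarded. I would control it through the Calkin-algebra quotient seminorm $d\big(\cdot,\mathcal{K}(\Li^1(\R^2))\big)$, which is submultiplicative, subadditive, satisfies $\Vert\pi\Vert\leq1$ for the quotient map $\pi$, and is equivalent to the measure-of-non-compactness seminorm $\Vert\cdot\Vert_{\rm ess}$ of Definition \ref{DE4.4}; since only the exponential growth rate matters, constants are irrelevant. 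Because $A+B_y$ generates an analytic semigroup (Theorem \ref{TH3.9}), the map $s\mapsto e^{-\chi(t-s)}\alpha\,T_{A+B_y-\alpha I}(s)$ is norm-continuous on $(0,t]$ and bounded, hence Bochner integrable in $\L(\Li^1(\R^2))$; applying the bounded homomorphism $\pi$ under the integral and using $\Vert T_{A+B_y}(s)\Vert_{\rm ess}\leq\Vert T_{A+B_y}(s)\Vert\leq1$ yields
\[
\Vert\Phi_t\Vert_{\rm ess}\;\leq\;\int_0^t \alpha\,e^{-\chi(t-s)-\alpha s}\,\d s,
\]
which equals $\alpha\,(e^{-\alpha t}-e^{-\chi t})/(\chi-\alpha)$ when $\alpha\neq\chi$ and $\alpha t\,e^{-\alpha t}$ when $\alpha=\chi$, so in either case its exponential decay rate is $-\min(\alpha,\chi)$.

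Collecting the four estimates, $\kappa\big(T_{\mathcal{A}_y}(t)B_X(0,1)\big)\leq C\big(e^{-\alpha t}+e^{-\chi t}+\int_0^t\alpha\,e^{-\chi(t-s)-\alpha s}\,\d s\big)$, and taking $\tfrac1t\log$ and letting $t\to\infty$ gives $\omega_{ess}(\mathcal{A}_y)\leq-\min(\alpha,\chi)$. Since $\min(\gamma,\alpha,\chi)\leq\min(\alpha,\chi)$, this implies the announced bound $\omega_{ess}(\mathcal{A}_y+\mathcal{C}_y)=\omega_{ess}(\mathcal{A}_y)\leq-\min(\alpha,\chi)\leq-\min(\gamma,\alpha,\chi)$; in fact the argument yields the slightly sharper value $-\min(\alpha,\chi)$, the parameter $\gamma$ being harmless because the decoupled $u$-block is finite-dimensional. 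The only genuinely delicate point is the essential-norm estimate for the non-compact operator $\Phi_t$; everything else is bookkeeping with the measure of non-compactness on the triangular semigroup.
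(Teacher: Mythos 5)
Your argument is correct and follows essentially the same route the paper takes (and largely leaves implicit): the perturbation $\mathcal{C}_y$ is compact, so by the cited result of Webb the essential growth rates of $\mathcal{A}_y+\mathcal{C}_y$ and of $\mathcal{A}_y$ coincide, and the latter is controlled by the exponential decay visible in the explicit triangular formula \eqref{4.5}. Your component-wise estimate of the measure of non-compactness is a mild refinement of the paper's shortcut (bounding the essential growth rate by the growth bound of $T_{\mathcal{A}_y}$, which is $-\min(\gamma,\alpha,\chi)$), and it yields the slightly sharper value $-\min(\alpha,\chi)$ because the $u$-block acts on the finite-dimensional factor $\R$.
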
 

Thanks to the negative essential growth rate and since the positive orbits are bounded, we deduce that the positive orbits are relatively compact (i.e., their closure is compact), and we obtain the following theorem.  
\begin{proposition}
	
	Assume that $\mathbf{C_y}$ satisfies Assumption \ref{ASS3.3}.	The omega-limit set of each trajectory is defined by 
	$$
	\omega\left(\begin{array}{c}
		u_0 \\
		v_0\\
		w_0
	\end{array}\right)    := \bigcap_{t \geq 0} \overline{\bigcup_{s \geq t} \left\{ T_{\mathcal{A}_y + \mathcal{C}_y }(s) \left(\begin{array}{c}
			u_0 \\
			v_0\\
			w_0
		\end{array}\right)  \right\}}
	$$ 
	in a non-empty compact subset of $X$ and is contained in 
	\begin{equation} \label{4.12}
		\left\{\left(u_0, v_0, w_0 \right)  \in X_+ : 	u+ \int_{\R^2}v(x)\dx+  \int_{\R^2}w(x)\dx =n(y) \right\}. 
	\end{equation}
\end{proposition}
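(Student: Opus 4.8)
The plan is to combine the conservation law of Theorem \ref{TH4.3} with the negative essential growth rate of Lemma \ref{LE4.5}: the former pins every positive orbit to a sphere and thus makes it bounded, the latter upgrades boundedness to relative compactness, and the mass constraint in \eqref{4.12} is then read off by passing to the limit in the conserved functional. Throughout I fix $(u_0,v_0,w_0)\in X_+$, set $x_0=(u_0,v_0,w_0)$, and write $(u(t),v(t),w(t))=T_{\mathcal{A}_y+\mathcal{C}_y}(t)x_0$.

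First I would establish boundedness. By the positivity Theorem \ref{TH4.2} the whole orbit stays in $X_+$, so $u(t)\geq 0$, $v(t)\geq 0$ and $w(t)\geq 0$; hence $\Vert (u(t),v(t),w(t))\Vert = u(t)+\int_{\R^2}v(t,x)\dx+\int_{\R^2}w(t,x)\dx$. By the conservation law \eqref{4.11} this quantity equals $n(y)$ for every $t\geq 0$, so the positive orbit lies on the sphere of radius $n(y)$ and is contained in $B_X(0,n(y))$.

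Next I would promote boundedness to relative compactness. Since $\omega_{ess}(\mathcal{A}_y+\mathcal{C}_y)\leq -\min(\gamma,\alpha,\chi)<0$ by Lemma \ref{LE4.5}, the essential norm satisfies $\Vert T_{\mathcal{A}_y+\mathcal{C}_y}(t)\Vert_{\rm ess}=\kappa\big(T_{\mathcal{A}_y+\mathcal{C}_y}(t)(B_X(0,1))\big)\to 0$ as $t\to\infty$. Writing the tail of the orbit as $\{T_{\mathcal{A}_y+\mathcal{C}_y}(s)x_0:s\geq t\}=T_{\mathcal{A}_y+\mathcal{C}_y}(t)\{T_{\mathcal{A}_y+\mathcal{C}_y}(r)x_0:r\geq 0\}\subset T_{\mathcal{A}_y+\mathcal{C}_y}(t)(B_X(0,n(y)))$, the homogeneity of the Kuratovsky measure gives $\kappa(\{T_{\mathcal{A}_y+\mathcal{C}_y}(s)x_0:s\geq t\})\leq n(y)\,\Vert T_{\mathcal{A}_y+\mathcal{C}_y}(t)\Vert_{\rm ess}\to 0$. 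Splitting the full orbit into the compact head $\{T_{\mathcal{A}_y+\mathcal{C}_y}(s)x_0:0\leq s\leq t\}$, which is a continuous image of a compact interval, and this tail, and using subadditivity of $\kappa$, I obtain $\kappa(\{T_{\mathcal{A}_y+\mathcal{C}_y}(s)x_0:s\geq 0\})=0$, so the orbit has compact closure in $X$. I expect this to be the main obstacle, since it is where the abstract machinery of the measure of non-compactness and the decomposition afforded by the negative essential growth rate (see Magal and Ruan \cite{Magal-Ruan}) really enters.

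Finally, once the orbit is relatively compact, the omega-limit set $\omega(u_0,v_0,w_0)=\bigcap_{t\geq 0}\overline{\bigcup_{s\geq t}\{T_{\mathcal{A}_y+\mathcal{C}_y}(s)x_0\}}$ is a nested intersection of nonempty closed subsets of the compact set $\overline{\{T_{\mathcal{A}_y+\mathcal{C}_y}(s)x_0:s\geq 0\}}$, hence nonempty and compact by the finite intersection property. To get the inclusion in \eqref{4.12} I would introduce the bounded linear functional $\Phi(u,v,w)=u+\int_{\R^2}v(x)\dx+\int_{\R^2}w(x)\dx$, of norm at most one, and observe that \eqref{4.11} reads $\Phi(T_{\mathcal{A}_y+\mathcal{C}_y}(t)x_0)=n(y)$ for all $t\geq 0$. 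For any $\xi\in\omega(u_0,v_0,w_0)$ there is a sequence $t_n\to\infty$ with $T_{\mathcal{A}_y+\mathcal{C}_y}(t_n)x_0\to\xi$; continuity of $\Phi$ then yields $\Phi(\xi)=n(y)$, while closedness of the cone $X_+$ gives $\xi\in X_+$. Thus every element of the omega-limit set satisfies the constraint defining \eqref{4.12}, which completes the argument.
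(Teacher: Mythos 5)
Your proposal is correct and follows essentially the same route as the paper, which only sketches the argument in the sentence preceding the proposition: boundedness of the positive orbit from positivity plus the conservation law, relative compactness from the negative essential growth rate of Lemma \ref{LE4.5} via the Kuratovsky measure of non-compactness, and the inclusion in \eqref{4.12} by passing to the limit in the conserved (continuous, linear) mass functional on the closed cone $X_+$. You have merely supplied the standard details that the paper leaves implicit, so no further comparison is needed.
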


\subsection{Equilibria}
An equilibrium solution of the model \eqref{2.1} will satisfy 
\begin{equation}\label{4.13}
	\left\{\begin{array}{ll}
		0=\chi \int_{\R^2}\overline{w}(x)\dx-\gamma \overline{u}(y),   \vspace{0.2cm} \\
		0=\varepsilon^2\Delta_{x} \overline{v}(x)- \mathbf{\nabla}_x \cdot  \left(  \overline{v}(x)\, \mathbf{C_y}(x)	 \right)-\alpha  \overline{v}(x)+\gamma g(x-y) \overline{u}(y),    \vspace{0.2cm} \\
		0=\alpha  \overline{v}-\chi  \overline{w}, 
	\end{array}\right.
\end{equation}
From the first and the equation of \eqref{4.13}, we deduce that 
\begin{equation} \label{4.14}
	\overline{w}_y(x)= \dfrac{\alpha}{\chi}  \overline{v}_y(x), \text{ and } \overline{u}(y)=\dfrac{\chi}{\gamma} \int_{\R^2}\overline{w}_y(x)\dx.
\end{equation}
By using the conservation of the total number of individuals in each house, we have   
$$
\overline{u}(y)+ \int_{\R^2}\overline{v}_y(y)\dx+  \int_{\R^2}\overline{w}_y(x)\dx=n(y),
$$
and by using \eqref{4.14}, we deduce that 
\begin{equation} \label{4.15}
	\overline{u}(y)=\dfrac{\tau}{\gamma} n(y)=\dfrac{1}{ \left(1+ \dfrac{\gamma}{\alpha}+\dfrac{\gamma}{\chi} \right)}  n(y),
\end{equation}
where 
$$
\tau=\dfrac{1}{ \left(\dfrac{1}{\gamma}+ \dfrac{1}{\alpha}+\dfrac{1}{\chi} \right)}.
$$
By plugging \eqref{4.15} into the $v$-equation of \eqref{4.13}, we deduce that 
$$
0=\varepsilon^2\Delta_{x} \overline{v}_y-\mathbf{\nabla}_x \cdot  \left(  \overline{v}_y\, \mathbf{C}_y	 \right)-\alpha  \overline{v}_y+\tau g(x-y) n(y),
$$
which is equivalent 
$$
\alpha  \overline{v}_y-	\varepsilon^2\Delta_{x} \overline{v}+\mathbf{\nabla}_x \cdot  \left(  \overline{v}_y\, \mathbf{C}_y \right)=\tau g(x-y) n(y).
$$
Therefore 
\begin{equation*} 
	\overline{v}_y(x)= \bigg(\alpha I-A-B_y	\bigg)^{-1}\bigg(\tau g(\cdot-y) n(y)\bigg),
\end{equation*}
or equivalently 
\begin{equation} \label{4.16}
	\overline{v}_y(x)= \tau n(y) \int_{0}^{+\infty} e^{-\alpha t} T_{A+B_y}(t) \left(g(.-y)\right)(x)\d t.
\end{equation}

\subsection{Asymptotic behavior}

By integrating in $x$ $v(t,x)$ and $w(t,x)$ and by using  
\begin{equation}
	\left\{ 
	\begin{array}{l}
		u'(t)=	\chi \int_{\R^2}w(t,x)\dx -\gamma u(t),\\
		v'(t)= \left(A_y+B_y \right)v(t)- \alpha v(t)+ \gamma g(.-y) u(t),\\
		w'(t)=\alpha v(t)-\chi w(t),
	\end{array}
	\right. 
\end{equation}
with initial distribution 
\begin{equation}
	u(0)=U_0 \in \R_+, v(0)=v_0 \in L^1_+(\R^2), \text{ and } w(0)=w_0 \in L^1_+(\R^2). 
\end{equation}

By using Perron-Frobenius theorem applied to the irreducible system  \eqref{4.9} we obtain the following theorem. We refer to Ducrot, Griette, Liu, and Magal \cite[Theorem 4.53]{DGLM-2022} for more result on this subject. 
\begin{lemma} \label{LE4.7}
	Assume that $\alpha>0$ $\gamma>0$ and $\chi>0$. Then the solution of system \eqref{4.9} satisfies 
	\begin{equation} \label{4.19}
		\lim_{t \to \infty }u(t)=\overline{u}, \lim_{t \to \infty }V(t)=\overline{V},  \text{ and } \lim_{t \to \infty }W(t)=\overline{W},
	\end{equation}
	where
	\begin{equation*}
		\overline{u}\left(1+\dfrac{\gamma}{\alpha}+ \dfrac{\gamma}{\chi}\right)=n(y), 
	\end{equation*} 
	\begin{equation*}
		\overline{V}\left(\dfrac{\alpha}{\gamma}+1+ \dfrac{\alpha}{\chi}\right)=n(y), 
	\end{equation*}
	and 
	\begin{equation*}
		\overline{W}\left(\dfrac{\chi}{\gamma}+ \dfrac{\chi}{\alpha}+1\right)=n(y).
	\end{equation*}
	Moreover, the convergence in \eqref{4.19} is exponential. That is, there exists a constant $M>0$ and $\delta>0$ such that for each $t \geq 0$,  
	\begin{equation} \label{4.20}
		\vert u(t)-\overline{u}\vert \leq M e^{-\delta t}, 	\vert V(t)-\overline{V}\vert \leq M e^{-\delta t},  \text{ and }	\vert W(t)-\overline{W}\vert \leq M e^{-\delta t}. 
	\end{equation}
\end{lemma}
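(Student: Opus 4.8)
The plan is to read \eqref{4.9} as a linear autonomous ODE $X'(t)=\mathbf{M}X(t)$ on $\R^3$, with $X=(u,V,W)^{\top}$ and
$$
\mathbf{M}=\begin{pmatrix} -\gamma & 0 & \chi \\ \gamma & -\alpha & 0 \\ 0 & \alpha & -\chi \end{pmatrix}.
$$
Two structural features drive the whole argument. First, the off-diagonal entries of $\mathbf{M}$ are nonnegative (it is a Metzler/cooperative matrix), so $\{e^{t\mathbf{M}}\}_{t\geq 0}$ is a positive matrix semigroup. Second, every column of $\mathbf{M}$ sums to $0$, i.e. $(1,1,1)\mathbf{M}=0$, which is exactly the conservation law \eqref{4.11}: $u(t)+V(t)+W(t)\equiv n(y)$. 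In particular $0$ is an eigenvalue of $\mathbf{M}$, with left eigenvector $(1,1,1)$.

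The core of the proof is the spectral step. Positivity together with the conserved mass shows that for $X(0)\in\R^3_+$ one has $\|e^{t\mathbf{M}}X(0)\|_1=(1,1,1)\cdot X(0)$ constant; writing an arbitrary datum as a difference of positive vectors then gives $\sup_{t\geq 0}\|e^{t\mathbf{M}}\|<\infty$, so no eigenvalue of $\mathbf{M}$ has positive real part and $0$ is a rightmost point of the spectrum. Next, for $c>\max(\gamma,\alpha,\chi)$ the matrix $\mathbf{M}+cI$ is nonnegative and irreducible, since its transition graph $u\to V\to W\to u$ is a single strongly connected cycle; Perron--Frobenius then gives that its spectral radius is a simple eigenvalue with a strictly positive eigenvector, strictly dominating the modulus of all other eigenvalues. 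Translating back, $\mathbf{M}$ has a simple real dominant eigenvalue $\lambda_0$ with $\Re\lambda<\lambda_0$ for every other eigenvalue, and combined with the boundedness step this forces $\lambda_0=0$ to be simple with all remaining eigenvalues satisfying $\Re\lambda<0$. Equivalently, one may just compute the characteristic polynomial, which factors as $-\lambda\bigl[\lambda^{2}+(\gamma+\alpha+\chi)\lambda+(\gamma\alpha+\alpha\chi+\gamma\chi)\bigr]$; the quadratic factor has strictly positive coefficients, so by Routh--Hurwitz its two roots have strictly negative real part, confirming the picture directly. I expect this identification of $\lambda_0=0$ as simple and strictly dominant to be the only genuine obstacle; everything else is bookkeeping.

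Set $\delta:=-\max\{\Re\lambda:\lambda\in\mathrm{spec}(\mathbf{M}),\ \lambda\neq 0\}>0$ and let $P_0$ be the spectral projection onto $\ker\mathbf{M}$ along the complementary invariant subspace. Since $0$ is simple (hence semisimple), the spectral decomposition reads $e^{t\mathbf{M}}=P_0+e^{t\mathbf{M}}(I-P_0)$ with $\|e^{t\mathbf{M}}(I-P_0)\|\leq M e^{-\delta t}$ for some $M>0$. Applying this to $X(0)$ gives $X(t)\to \overline{X}:=P_0 X(0)$ and $\|X(t)-\overline{X}\|\leq M\|X(0)\|\,e^{-\delta t}$, which yields the exponential convergence \eqref{4.20} componentwise after adjusting the constant.

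Finally the limit $\overline{X}=(\overline{u},\overline{V},\overline{W})$ lies in $\ker\mathbf{M}$ and carries the conserved mass, so it solves $\mathbf{M}\overline{X}=0$ together with $\overline{u}+\overline{V}+\overline{W}=n(y)$. The kernel equations give $\overline{V}=(\gamma/\alpha)\,\overline{u}$ and $\overline{W}=(\gamma/\chi)\,\overline{u}$; substituting into the mass constraint gives $\overline{u}\bigl(1+\gamma/\alpha+\gamma/\chi\bigr)=n(y)$, and re-expressing the same three ratios in terms of $\overline{V}$ and of $\overline{W}$ gives $\overline{V}\bigl(\alpha/\gamma+1+\alpha/\chi\bigr)=n(y)$ and $\overline{W}\bigl(\chi/\gamma+\chi/\alpha+1\bigr)=n(y)$, which are the three displayed identities.
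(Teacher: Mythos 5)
Your proof is correct and follows essentially the same route as the paper: both identify the Metzler matrix of \eqref{4.9}, use irreducibility of the shifted matrix and the strictly positive left eigenvector $(1,1,1)$ for the eigenvalue $0$ to invoke Perron--Frobenius, and then read off the equilibrium from the kernel together with the mass constraint. Your additions --- the explicit factorization of the characteristic polynomial with Routh--Hurwitz, and the spectral projection giving the rate $\delta$ --- merely make explicit the exponential estimate \eqref{4.20} that the paper leaves to the cited Perron--Frobenius machinery.
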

\begin{proof} The matrix of system \eqref{4.9} is 
	$$
	L= 	\begin{pNiceMatrix}
		-\gamma     &  0 &\chi     \\
		\gamma  & -\alpha     & 0  \\
		0  &    \alpha      & -\chi \\ 
	\end{pNiceMatrix} .
	$$
	Therefore the system \eqref{4.9} is strongly connected (i.e., $L+\delta I$ is irreducible for all $\delta>0$ large enough). The vector $\mathbbm{1}^T=(1,1,1)^T$ is a strictly positive left-eigenvector associated with the eigenvalue $0$. The Perron-Frobenius theorem shows that $0$ is the dominant eigenvalue of $L$ (i.e., an eigenvalue with the largest real part).   The equilibrium of equation \eqref{4.9} corresponds to the right eigenvector. That is 
	\begin{equation} \label{4.21}
		\chi \overline{W} =\gamma \overline{u}, \alpha \overline{V}=  \gamma \overline{u} , \alpha \overline{V} =\chi \overline{W} ,
	\end{equation}
	and since we must impose that 
	$$
	\overline{u}+\overline{V}+\overline{W}=n(y),
	$$
	the proof is completed. 
\end{proof} 

\begin{theorem} \label{TH4.8} Assume that $\mathbf{C_y}$ satisfies Assumption \ref{ASS3.3}. Assume that $\alpha>0$ $\gamma>0$ and $\chi>0$.
	For each $y \in \R^2$, the solution of system \eqref{2.1} satisfies 
	\begin{equation} \label{4.22}
		\lim_{t \to+\infty} u_y(t)=  \overline{u}(y), \text{ in }  \R, 
	\end{equation}    
	
	\begin{equation} \label{4.23}
		\lim_{t \to+\infty} v_y(t,x)=  \overline{v}_y(x), \text{ in }  \Li^1(\R^2), 	
	\end{equation}       
	and 
	\begin{equation}	\label{4.24}
		\lim_{t \to+\infty} w_y(t,x)=  \overline{w}_y(x), \text{ in }  \Li^1(\R^2),
	\end{equation}
	and the convergence is exponential for each limit. 
	
\end{theorem}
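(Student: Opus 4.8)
The plan is to exploit the triangular (cascade) structure of the system: the scalar component $u_y$ together with the spatial integrals $V,W$ decouples into the closed linear ODE system \eqref{4.9}, whose exponential convergence is already settled, and then the full profiles $v_y$ and $w_y$ are recovered from the variation-of-constants formulas of Theorem \ref{TH4.1} by feeding the previously obtained limits forward. The one nonelementary ingredient I would establish first is the contraction bound
\[
\Vert T_{A+B_y}(t) \Vert_{\L\left(\Li^1\left(\R^2\right)\right)} \leq 1, \quad \forall t \geq 0,
\]
which follows by writing $\phi=\phi_+-\phi_-$ and combining positivity (Theorem \ref{TH3.12}) with mass conservation (Theorem \ref{TH3.14}): each part satisfies $\Vert T_{A+B_y}(t)\phi_\pm\Vert_{\Li^1}=\int_{\R^2}T_{A+B_y}(t)\phi_\pm=\int_{\R^2}\phi_\pm$. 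Hence $\Vert T_{A+B_y-\alpha I}(t)\Vert\leq e^{-\alpha t}$, the growth bound of $T_{A+B_y}$ is nonpositive, so $\alpha\in\rho(A+B_y)$ with $(\alpha I-(A+B_y))^{-1}=\int_0^\infty e^{-\alpha t}T_{A+B_y}(t)\,\d t$, and the equilibrium formula \eqref{4.16} is well defined.

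For the $u$-component, applying Lemma \ref{LE4.7} to \eqref{4.9} gives directly $u_y(t)\to\overline{u}(y)$ with $\vert u_y(t)-\overline{u}(y)\vert\leq Me^{-\delta t}$, which is \eqref{4.22}. For the $v$-component I would use the variation-of-constants formula: the homogeneous term $T_{A+B_y-\alpha I}(t)v_0$ is bounded by $e^{-\alpha t}\Vert v_0\Vert_{\Li^1}$, and in the inhomogeneous term I substitute $u_y(\sigma)=\overline{u}(y)+(u_y(\sigma)-\overline{u}(y))$ and change variables $s=t-\sigma$. The constant piece becomes
\[
\gamma\,\overline{u}(y)\int_0^t e^{-\alpha s}\,T_{A+B_y}(s)\,g(\cdot-y)\,\d s,
\]
which converges as $t\to\infty$ to $\gamma\overline{u}(y)(\alpha I-(A+B_y))^{-1}g(\cdot-y)=\overline{v}_y$, using $\gamma\overline{u}(y)=\tau\,n(y)$ from \eqref{4.15} and the representation \eqref{4.16}, with tail remainder $\leq\gamma\overline{u}(y)\Vert g\Vert_{\Li^1}\alpha^{-1}e^{-\alpha t}$. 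The fluctuation piece is controlled by $\gamma\Vert g\Vert_{\Li^1}\int_0^t e^{-\alpha(t-\sigma)}Me^{-\delta\sigma}\,\d\sigma$, a convolution of exponentials bounded by $Ce^{-\min(\alpha,\delta)t}$ (shrinking $\delta$ slightly if $\alpha=\delta$ to absorb the polynomial factor). Summing, $v_y(t)\to\overline{v}_y$ in $\Li^1\left(\R^2\right)$ exponentially, giving \eqref{4.23}.

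For the $w$-component I would insert the just-proved exponential convergence $v_y(\sigma)\to\overline{v}_y$ into $w_y(t)=e^{-\chi t}w_0+\alpha\int_0^t e^{-\chi(t-\sigma)}v_y(\sigma)\,\d\sigma$ and split $v_y(\sigma)=\overline{v}_y+(v_y(\sigma)-\overline{v}_y)$. The constant piece gives $\tfrac{\alpha}{\chi}(1-e^{-\chi t})\overline{v}_y\to\tfrac{\alpha}{\chi}\overline{v}_y=\overline{w}_y$ by \eqref{4.14}, while $e^{-\chi t}w_0$ and the remaining convolution term are again exponentially small. This yields \eqref{4.24}, with rate $\min(\chi,\delta,\alpha)$ up to an arbitrarily small reduction.

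The heart of the argument is the $v$-step: the main obstacle is precisely the uniform contractivity $\Vert T_{A+B_y}(t)\Vert\leq1$ combined with the $e^{-\alpha t}$ factor, without which the exponential convolution estimate would not close; the second delicate point is identifying the limit \emph{exactly} with $\overline{v}_y$ through the resolvent representation \eqref{4.16}, rather than merely proving convergence to some unspecified equilibrium. A more abstract alternative would invoke Lemma \ref{LE4.5}: since $\omega_{ess}(\mathcal{A}_y+\mathcal{C}_y)<0$, the asymptotics are governed by finitely many eigenvalues, and a Perron--Frobenius/irreducibility argument (using Lemma \ref{LE3.2}) would identify $0$ as a simple dominant eigenvalue with the equilibrium as eigenvector; but the cascade computation above is more direct and delivers the exponential rate explicitly.
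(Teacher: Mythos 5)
Your proposal is correct and follows essentially the same route as the paper: exponential convergence of $u_y$ from Lemma \ref{LE4.7} applied to the reduced ODE system \eqref{4.9}, then propagation to $v_y$ and $w_y$ through the variation-of-constants formulas using the $e^{-\alpha t}$ (resp. $e^{-\chi t}$) decay, with the same $\min(\alpha,\delta)$-type rate and the same $(1+t)$ caveat. The only differences are cosmetic (you decompose the source term $u_y(\sigma)=\overline{u}+(u_y(\sigma)-\overline{u})$ where the paper subtracts the equilibrium's own fixed-point identity) plus the welcome bonus that you explicitly justify the contraction bound $\Vert T_{A+B_y}(t)\Vert\leq 1$ via positivity and mass conservation, a step the paper uses implicitly.
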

\begin{proof}
	By Lemma \ref{LE4.7}, we already know the exponential convergence in \eqref{4.22}. Let us consider the exponential convergence in  \eqref{4.23}. We have 
	$$
	\begin{array}{rl}
		v(t,x)&=T_{A_y+B_y - \alpha I}(t)v_0+\int_{0}^{t}  T_{A_y+B_y - \alpha I}(t-\sigma) \gamma g(.-y) u(\sigma) \d \sigma,
	\end{array}
	$$ 
	and 
	$$
	\begin{array}{rl}
		\overline{v}(x)&=T_{A_y+B_y - \alpha I}(t)\overline{v}+\int_{0}^{t}  T_{A_y+B_y - \alpha I}(t-\sigma) \gamma g(.-y) \overline{u}(\sigma) \d \sigma.
	\end{array}
	$$ 
	Therefore, we deduce that  
	$$
	\begin{array}{rl}
		v(t,x)-\overline{v}(x)&=T_{A_y+B_y - \alpha I}(t) \left(v_0-\overline{v}\right)\\
		&+\int_{0}^{t}  T_{A_y+B_y - \alpha I}(t-\sigma) \gamma g(.-y) \left(u(\sigma) -\overline{u}\right)\d \sigma,
	\end{array}
	$$ 
	and we obtain 
	$$
	\begin{array}{rl}
		\Vert v(t)-\overline{v} \Vert_{ \Li^1\left(\R^2\right) } \leq &e^{-\alpha t} \Vert v_0-\overline{v} \Vert_{ \Li^1\left(\R^2\right) } \\
		&+ \int_{0}^{t} e^{-\alpha \left(t-\sigma\right)}\vert u(\sigma)-\overline{u} \vert \d\sigma.
	\end{array}
	$$
	Now, by Lemma \ref{LE4.7}, we have $\vert u(t)-\overline{u}\vert \leq M e^{-\delta t}, \forall t \geq 0$, we obtain 
	$$
	\begin{array}{rl}
		\Vert v(t)-\overline{v} \Vert_{ \Li^1\left(\R^2\right) } \leq &e^{-\eta t }(1+t) \left( \Vert v_0-\overline{v} \Vert_{ \Li^1\left(\R^2\right) }+M\right) \\
	\end{array}
	$$
	where $\eta= \min \left(\alpha, \delta \right)>0$.  The exponential convergence in  \eqref{4.24} follows by using the exponential convergence in \eqref{4.23} and using similar arguments to those above. 
\end{proof}
\begin{remark}
	The above result is relate the irreducibility of the semigroup $\left\{T_{\mathcal{A}_y+ \mathcal{C}_y}(t)\right\}_{t \geq 0}$. The difficulty would be to prove the additional result 
	for each $\phi \in  \Li^\infty_+\left(\R^2\right)$,  with $\phi  \neq 0$,  we have 
	$$
	\int_{\R^2} \phi(x) u(t,x)dx >0, \forall t >0,
	$$
	where  
	\begin{equation}
		u(t,x)=T_{A+B_y}(t)(g(.-y))(x), \forall t \geq 0.		
	\end{equation}
	The reader can find more result on this topic in the paper by  Webb \cite[Remark 2.2]{Webb87} (see also  \cite{Arendt2020, Arendt, Arino, Grabosch, Greiner} for more on this subject) to prove  infinite dimensional Perron-Frobenius like theorem.  Here, we propose a more direct approach to study the asymptotic behavior of the system.
\end{remark}

\section{Hybrid formulation of a return to home model}
\label{Section5}
A major difficulty in applying such a model in concrete situations is the computation time. Indeed the time of computation grows exponentially with the discretization step. In the previous section, we introduced reduction technique, that could be used to run the simulations of the return home model. Unfortunately such an idea does not apply to the case of epidemic model.  To circumvent this difficulty, we now introduce discrete homes locations.

\begin{assumption} \label{ASS5.1}
	Assume that we can find a sequence of point $ y_i =\left(y^i_1,y^i_2 \right) \in \R^2 $, and the index $i$ belongs to a countable set $I$. 
\end{assumption}

\begin{remark} \label{ASS5.2} In the numerical simulations section, it will be convenient to use a finite number of  homes 
	$$        
	I=\left\{1, \ldots,n\right\}. 
	$$        
	But, we could also  consider a one dimensional lattice with $I=\Z$ or a two dimensional lattice with $I= \Z\times \Z$.  
\end{remark}

The model we consider now is the previous model in which we assume that  
$$
n(y)=u(t,y)+ \int_{\R}\left(v+w\right)(t,x,y)dy=\sum_{i \in I} n_i \delta_{y_i}(y)
$$ 
where $y \to  \delta_{y_i}(y)$ is the Dirac mass at $y_i$. 

\medskip 
Instead of considering 
$$
u(t,y)=\sum_{i \in I} u_i(t) \delta_{y_i}(y),
$$ 
it is sufficient to consider $\left(u_1 (t), \ldots, u_n (t) \right)\in \R^n $ the numbers of individual staying at home with their home located at $y_1, \ldots, y_n$. 

\medskip 
We define $v_i(t,x)$ (respectively $w_i(t,x)$) the density of travelers (respectively workers) with their home located at $y_i$, and $x \mapsto \mathbf{C}_i(x)$ the traveling speed of individual coming from the home located at $y_i$. 

\medskip 
The return home model consists of a decoupled system of $n$ sub-system of the following form  
\begin{equation}\label{5.1}
	\left\{\begin{array}{ll}
		\partial_{t} u_i(t)=\chi \int_{\R^2}w_i(t,x)\dx-\gamma u_i(t),   \vspace{0.2cm} \\
		\partial_{t} v_i(t,x)=\varepsilon^2\Delta_{x}v_i-\mathbf{\nabla}_x \cdot  \left( v_i\, \mathbf{C}_i(x)	 \right)-\alpha v_i+\gamma g(x-y_i)u_i(t),    \vspace{0.2cm} \\
		\partial_{t} w_i(t,x)=\alpha v_i(t,x)-\chi w_i(t,x), 
	\end{array}\right.
\end{equation}
with $i=1, \ldots, n$, $ t\geq 0$, $x \in \R^2$ is the spatial location individual, and $y_i \in \R^2,$ is their home's location,  and the initial distribution at $t=0,$ and for $i=1, \ldots, n,$ 
\begin{equation} \label{5.2}
	\left\{
	\begin{array}{l}
		u_i(0)=u_{i0} \in [0, +\infty)  , \vspace{0.2cm} \\
		v_i(0,x)=v_{i0}(x) \in \Li^1_+\left(\R^2\right), \vspace{0.2cm} \\
		\text{and } \vspace{0.2cm} \\
		w_i(0,x)=w_{i0}(x) \in \Li^1_+\left(\R^2\right).
	\end{array}	
	\right. 
\end{equation}

\medskip 
\noindent \textbf{Conservation of individuals: }Total number of individual in each house $i \in I$ is preserved 
$$
n_i= \underset{\text{
		\begin{tabular}{@{}c@{}}  
			Number of\\individuals\\ at home
\end{tabular}}}{\underbrace{  u_i(t)}}+\underset{\text{
		\begin{tabular}{@{}c@{}}  
			Number of\\ travelers
\end{tabular}}}{\underbrace{  \int_{\R^2}v_i(t,x)\dx}}+\underset{\text{
		\begin{tabular}{@{}c@{}}  
			Number of\\ workers
\end{tabular}}}{\underbrace{   \int_{\R^2}w_i(t,x)\dx}},
$$
is the number of individuals in the home $i$ at time $t$.  

\medskip 
\noindent \textbf{Equilibria:} For each $i \in I$, we have a unique equilibrium 
\begin{equation*} 
	\overline{u}_i=\dfrac{1}{ \left(1+ \dfrac{\gamma}{\alpha}+\dfrac{\gamma}{\chi} \right)}  n_i,
\end{equation*}
\begin{equation*} 
	\overline{v}_i(x)= \bigg(\alpha I-A-B_{y_i}	\bigg)^{-1}\bigg(\tau g(\cdot-y_i ) n_i\bigg),
\end{equation*}
and 
\begin{equation*}
	\overline{w}_i(x)= \dfrac{\alpha}{\chi}  \overline{v}_i(x).
\end{equation*}
As a consequence of Theorem \ref{TH4.8}.
\begin{corollary} \label{CO5.3} Assume that $\mathbf{C}_i$ satisfies Assumption \ref{ASS3.3}. Assume that $\alpha>0$ $\gamma>0$ and $\chi>0$.
	For each $i \in I$, the solution of system \eqref{5.1} satisfies 
	$$        
	\lim_{t \to+\infty} u_i(t)=  \overline{u}_i, \text{ in }  \R, 
	$$        
	$$        
	\lim_{t \to+\infty} v_i(t,x)=  \overline{v}_i(x), \text{ in }  \Li^1(\R^2), 
	$$        
	and       
	$$       
	\lim_{t \to+\infty} w_i(t,x)=  \overline{w}_i(x), \text{ in }  \Li^1(\R^2),
	$$        
	
\end{corollary}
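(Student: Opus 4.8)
The plan is to read the corollary as a direct application of Theorem \ref{TH4.8}, one index at a time, so that essentially no new analysis is required. The key structural observation is that the family \eqref{5.1}, although indexed by $i \in I$, is completely decoupled: the equations governing $(u_i, v_i, w_i)$ involve only $(u_i, v_i, w_i)$ together with the fixed data $y_i$, $\mathbf{C}_i$, and $n_i$, with no term linking distinct homes. It therefore suffices to analyze each index $i$ separately.

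First I would fix $i \in I$ and observe that the corresponding sub-system in \eqref{5.1} is \emph{identical} to system \eqref{2.1} once the home parameter $y$ is set to $y_i$ and the velocity field $\mathbf{C_y}$ is set to $\mathbf{C}_i$. Under the hypotheses of the corollary, $\mathbf{C}_i$ satisfies Assumption \ref{ASS3.3} and $\alpha, \gamma, \chi > 0$, which are exactly the hypotheses needed to invoke Theorem \ref{TH4.8}. Applying that theorem to the sub-system indexed by $i$ yields
$$
\lim_{t \to +\infty} u_i(t) = \overline{u}(y_i) \text{ in } \R, \qquad
\lim_{t \to +\infty} v_i(t,\cdot) = \overline{v}_{y_i} \text{ in } \Li^1(\R^2), \qquad
\lim_{t \to +\infty} w_i(t,\cdot) = \overline{w}_{y_i} \text{ in } \Li^1(\R^2),
$$
each convergence being exponential.

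It remains only to match these limits with the equilibria $\overline{u}_i$, $\overline{v}_i$, $\overline{w}_i$ displayed just before the statement, which is pure bookkeeping. Under the discrete normalization $n(y_i) = n_i$ imposed in this section, the equilibrium formulas \eqref{4.15}, \eqref{4.16}, and \eqref{4.14} specialize term by term to $\overline{u}_i = \tau n_i / \gamma$, $\overline{v}_i = \left(\alpha I - A - B_{y_i}\right)^{-1}\left(\tau g(\cdot - y_i) n_i\right)$, and $\overline{w}_i = (\alpha/\chi)\,\overline{v}_i$. Substituting $\overline{u}(y_i) = \overline{u}_i$, $\overline{v}_{y_i} = \overline{v}_i$, and $\overline{w}_{y_i} = \overline{w}_i$ into the three limits above then gives precisely the asserted convergences.

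I do not expect a genuine obstacle, since Theorem \ref{TH4.8} already supplies both the convergence and its exponential rate for a single home location. The only step deserving a word of care is confirming that the constant $n(y)$ appearing in the continuous equilibria becomes the prescribed mass $n_i$ of the home $y_i$, so that the two sets of equilibrium formulas coincide verbatim; this is immediate from the construction of the hybrid model under Assumption \ref{ASS5.1}.
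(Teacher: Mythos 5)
Your proposal is correct and matches the paper's own treatment: the paper presents Corollary \ref{CO5.3} precisely as a direct consequence of Theorem \ref{TH4.8}, obtained by applying that theorem to each decoupled sub-system of \eqref{5.1} with $y=y_i$ and $\mathbf{C_y}=\mathbf{C}_i$. Your additional bookkeeping identifying $\overline{u}_i$, $\overline{v}_i$, $\overline{w}_i$ with the specializations of \eqref{4.15}, \eqref{4.16}, and \eqref{4.14} at $n(y_i)=n_i$ is exactly what the paper leaves implicit.
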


\section{Numerical simulations of the hybrid model}
\label{Section6}
In this section, we run a simulations of the model \eqref{2.1} on a bounded domain 
$$
\Omega=[0,1] \times [0,1].
$$ 
The model on bounded is presented in Appendix \ref{SectionA}. Here, we use the following initial distribution   
$$
u_i(0)=n_i, v_i(0,x)=w_i(0,x)=0, \forall i \in I. 
$$
We assume that the convection is null. That is,   
$$
\mathbf{C}_i(x) =0. \forall x \in \Omega, \forall i \in I.
$$
It is essential to mention that the numerical results are obtained by using an Euler integration method 
$\Delta x_1 \Delta x_2 \sum_{j} \sum_{k} w_i(t,x_1^j,x_2^k )$ for $\int_{\Omega}w_i(t,x)dx$ in the $u$-equation of system \eqref{5.1}. This method does not give a very good approximation of the integral, but this approximation is preserved through the numerical scheme used for diffusion. For example, the Simpson method does not work to compute the solution, and the errors accumulate and produces a blowup of the solutions. 
In the numerical simulations, we use a semi-implicit numerical method to compute the diffusive part of the system (see Appendix \ref{SectionA}).

In Table \ref{Table1}, we list the parameters used in the simulations. 

\begin{table}[H]
	\begin{center}
		\begin{tabular}{| c | c | c | c | }
			\hline
			
			{\bf	  Symbol	}& 		{\bf Interpretation  } &{\bf Value }& 	 {\bf Unit  }  \\
			
			\hline 
			$	\varepsilon $&  Diffusion coefficient  & $1$ & none \\
			\hline 	  
			$1/ \gamma$	 & Average time spent at home & $12/24$ & day \\
			\hline 	  
			$1/ \alpha$	 & Average time spent traveling & $2/24$ &  day \\
			\hline 	  
			$1/ \chi$	 & Average time spent at work & $10/24$ &  day \\
			\hline 	  
			$\sigma$ & Standard deviation  for the function $\rho(x1,x2)$& $0.05$ & none \\
			\hline
		\end{tabular}
	\end{center}
	\caption{\textit{List of parameters used in the simulations. }}\label{Table1}
\end{table}
\noindent In Figure \ref{Fig4}, we plot the number of people per home with the location of each home. 
\begin{figure}[H]
	\begin{center}
		\includegraphics[scale=0.25]{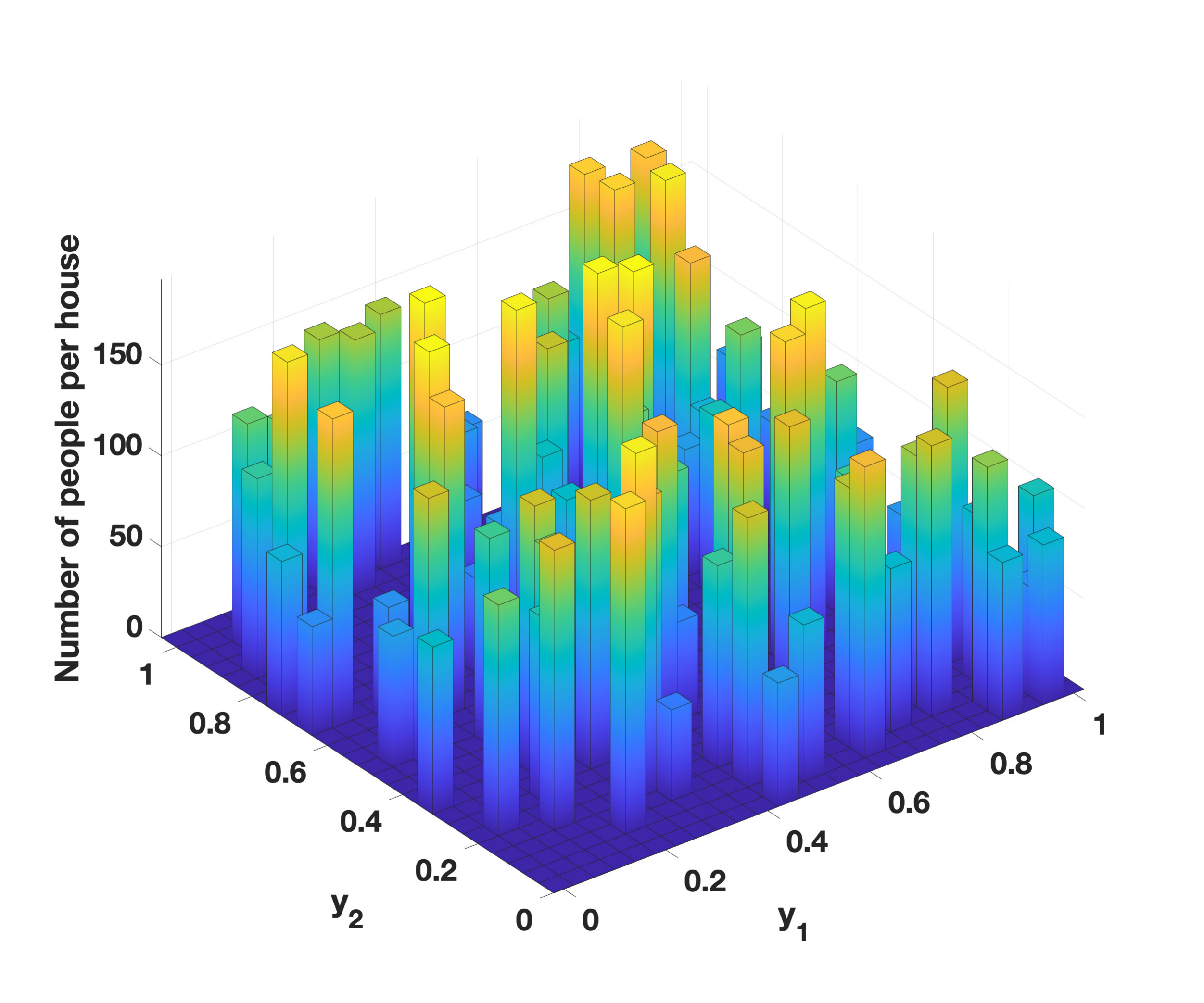}  
	\end{center}
	\caption{\textit{In this figure, we plot bars with height $n_i$  located at $y_i$, the number of individuals with their home located at $y_i \in \Omega$. The number of individuals per home varies randomly between $50$ and $200$ per home.}}\label{Fig4}
\end{figure}

In Figure \ref{Fig5},  we plot  
$$
x \mapsto \sum_{i \in I } n_i g(x-y_i)
$$ 
which is the density of individuals leaving their homes at time $t=0$. This figure gives another representation of the density of individuals at home.
\begin{figure}[H]
	\begin{center}
		\includegraphics[scale=0.18]{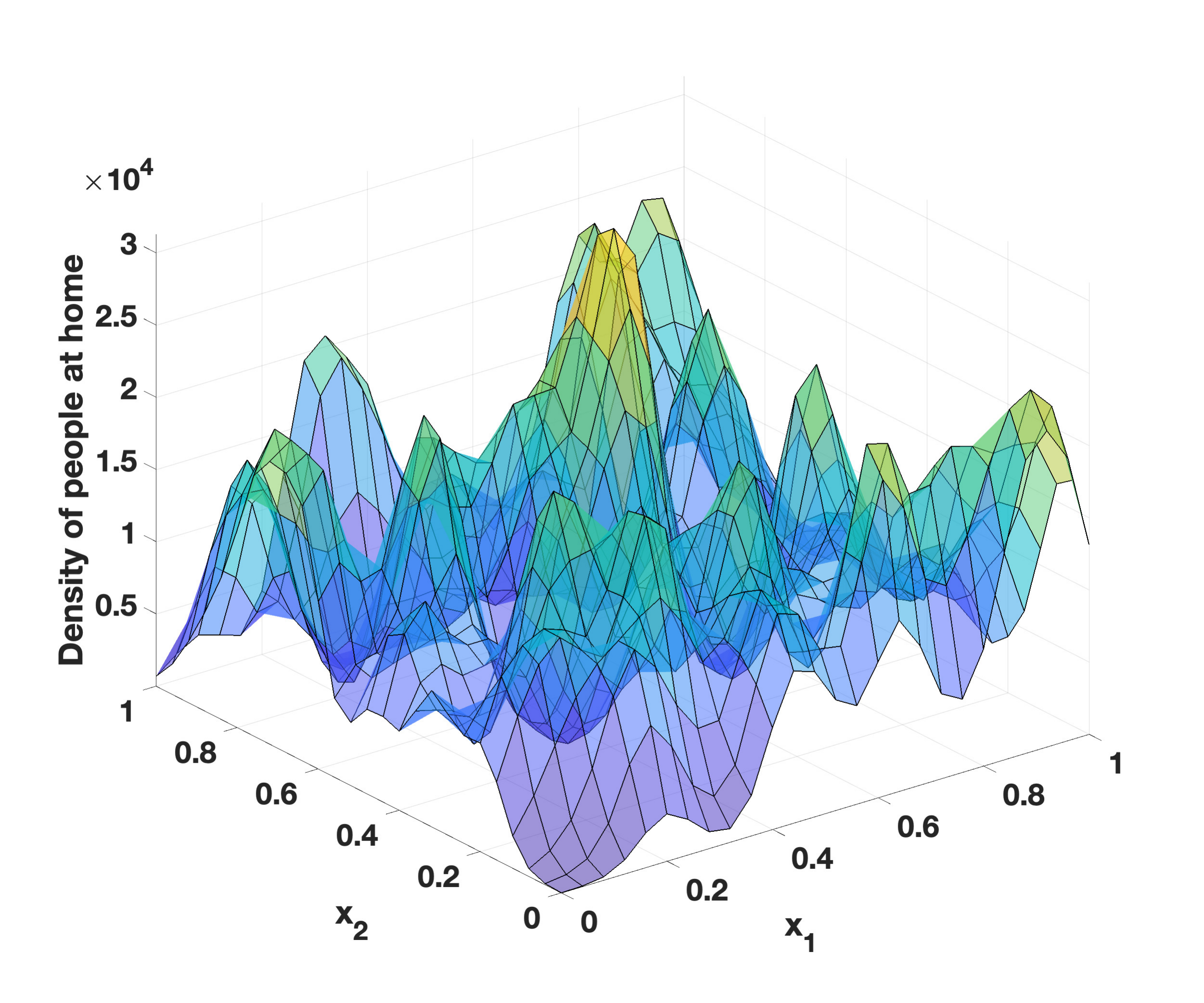}  
	\end{center}
	\caption{\textit{In this figure, we plot the total density of people at home $x \mapsto \sum_{i \in I } n_i g(x-y_i)$, where $N_i$ is the number of individuals in the home $i$, and $y_i$ is the location of the home $i$. This density of population  represents the distribution of individuals leaving their homes.  }}\label{Fig5}
\end{figure}

In Figure \ref{Fig6}, we observe that the numerical method preserves the number of individuals. 
\begin{figure}[H]
	\begin{center}
		\includegraphics[scale=0.18]{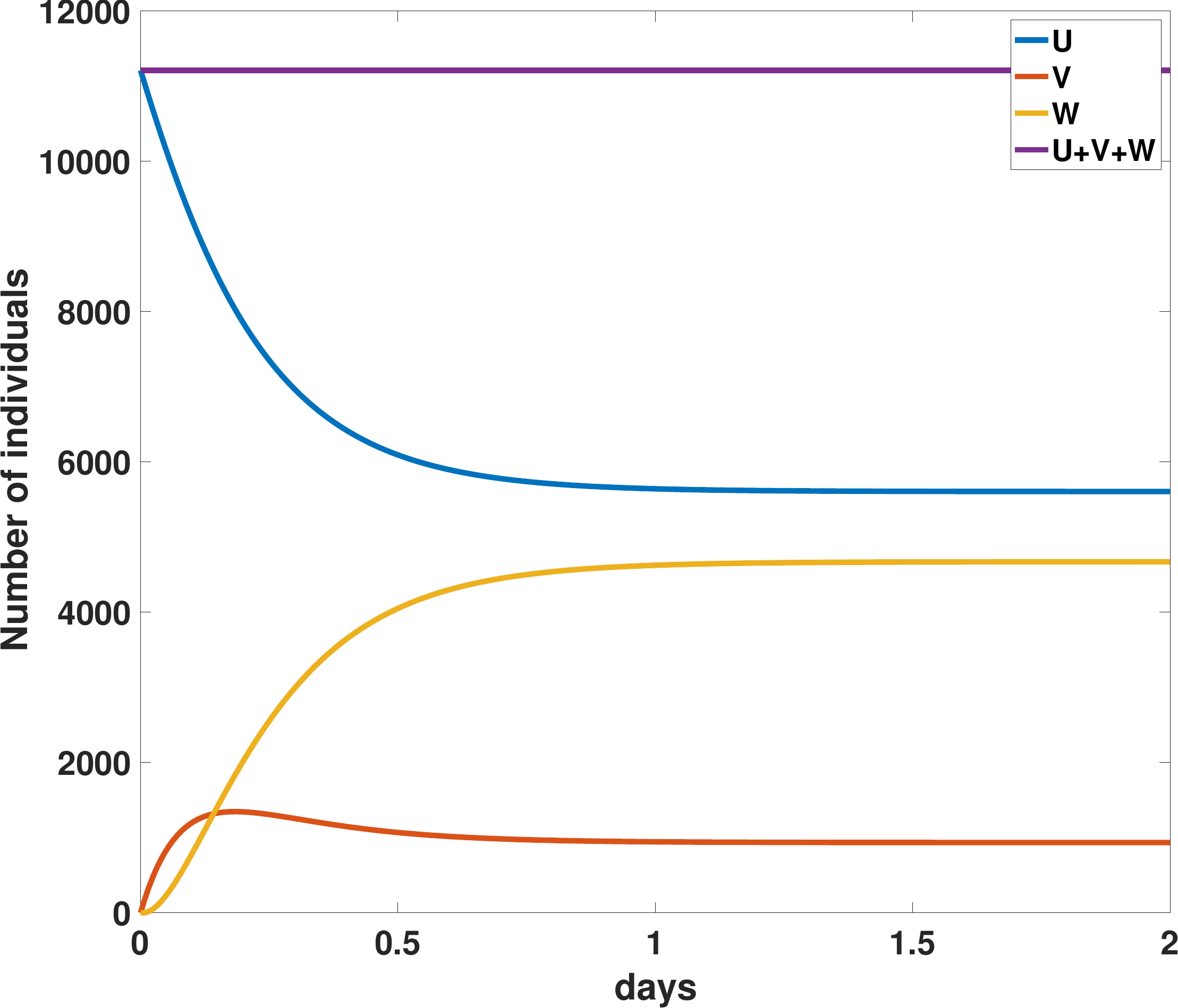}  
	\end{center}
	\caption{\textit{We plot the total number of individuals at home $t \to U(t)=\sum_{i \in I} u_i(t)$ (blue), the total number of travelers  $t \to V(t)=\sum_{i \in I} \int_{\Omega} v_i(t,x)dx$ (orange), and the total number of workers $t \to W(t)=\sum_{i \in I} \int_{\Omega} w_i(t,x)dx$ (yellow), the total number of individuals (purple). After one day, we observe the number of individuals in each compartment remains constant. }}\label{Fig6}
\end{figure}
In Figure \ref{Fig7}, we plot the number of people at home, travelers, and workers in each home at time $t=2$. That is 
$$
u_i(2), \,\,  \int_{\Omega} v_i(2,x)dx, \text{ and }  \int_{\Omega} w_i(2,x)dx, 
$$
and we draw a bar at their home location $y_i$. 

\medskip 
We observe that each distribution (a) (b) or (c) is a multiple of the density of individual at home $(y_1, y_2) \mapsto n \left(y_1, y_2\right)$,  and the individual are mixed subdivide in between each compartments. The maximal value is $180$  in (a), $15$  in (b), and  $70$ in (c).
\begin{figure}[H]
	\begin{center}
		\begin{minipage}{0.48\textwidth}
			\centering
			\includegraphics[width=\textwidth]{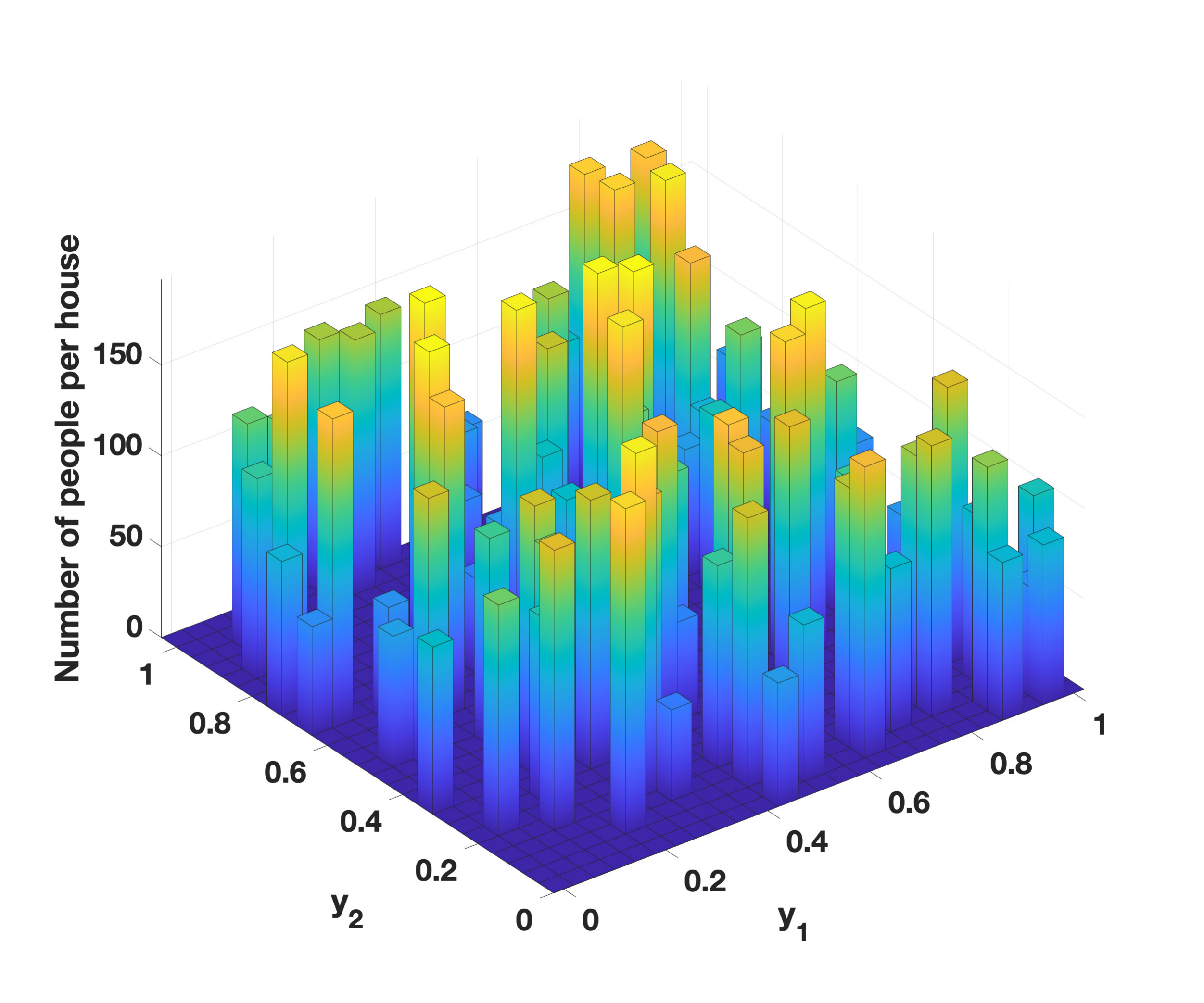}
		\end{minipage}
		\begin{minipage}{0.48\textwidth}
			\centering
			\includegraphics[width=\textwidth]{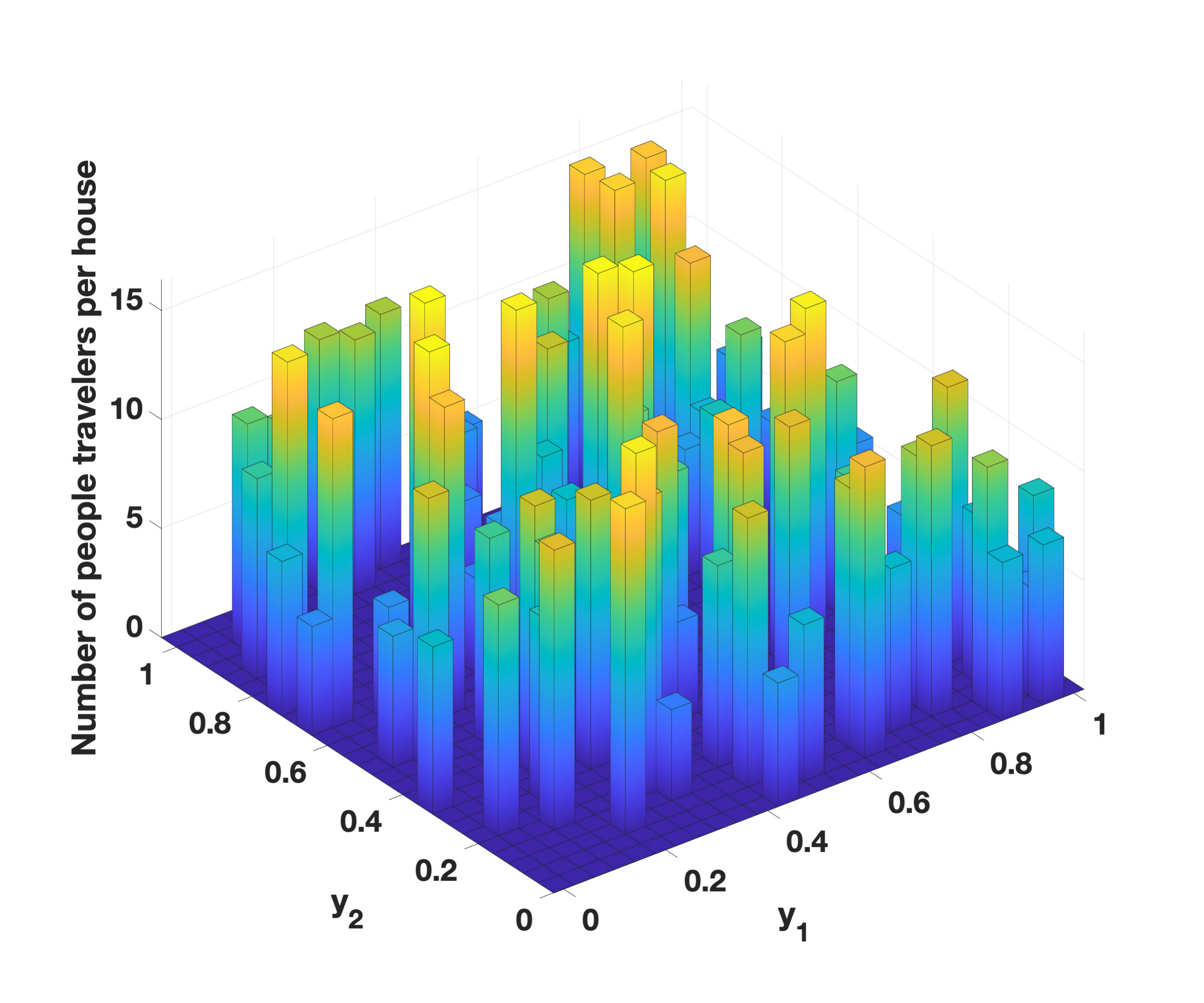}
		\end{minipage}
		\begin{minipage}{0.48\textwidth}
			\centering
			\includegraphics[width=\textwidth]{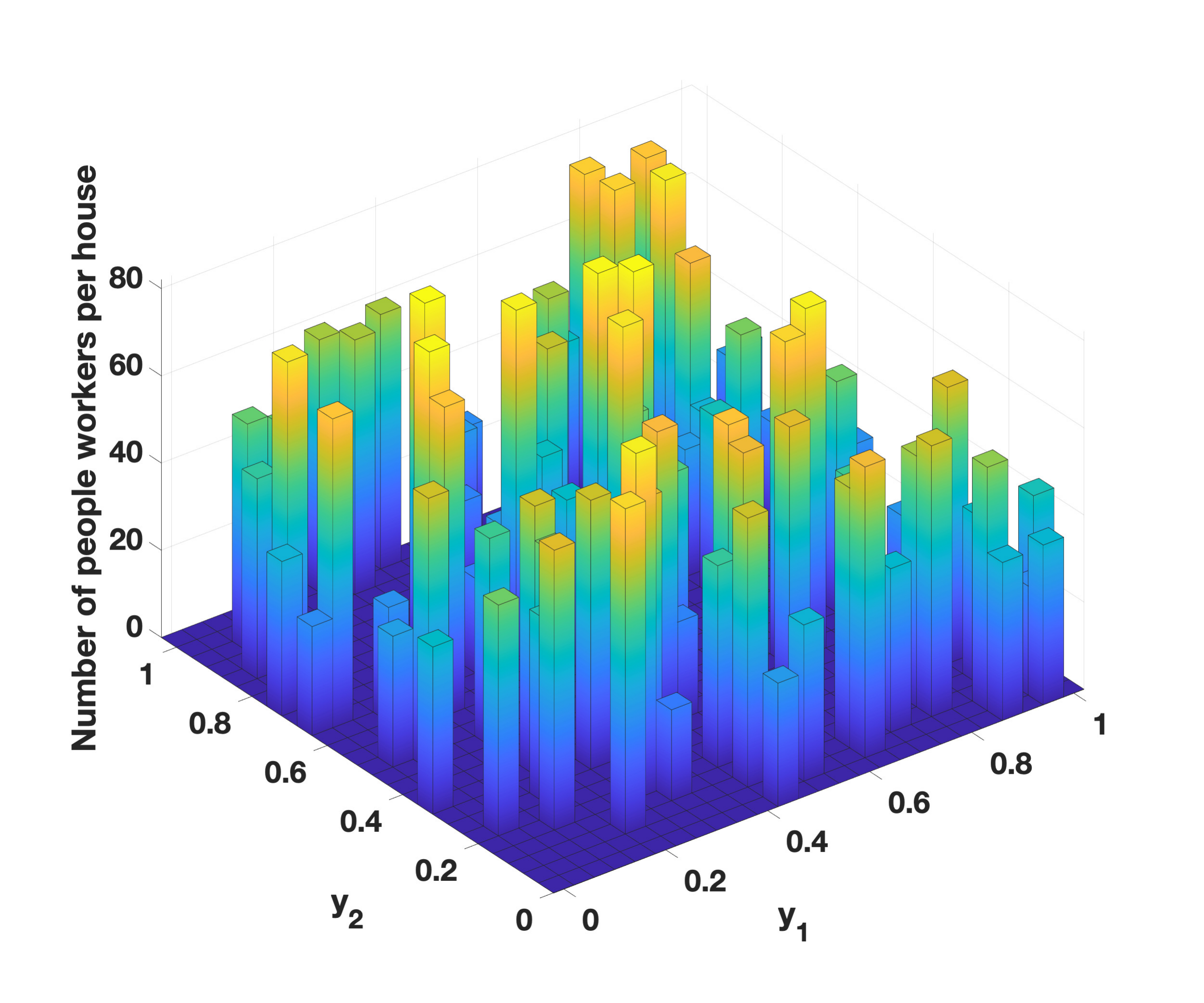}
		\end{minipage}
	\end{center}
	\caption{\textit{In this figure, we plot  the number of individuals on day $2$:  1) at home $y_i \mapsto u_i\left(2\right)$ (on the top left) for each home $i$; 2) traveling $y_i \mapsto \int_{\Omega} v_i(2,x)dx$ (on the top right) for each home $i$;  3) at work $y_i \mapsto \int_{\Omega} w_i(2,x)dx$ (on the bottom) for each home $i$.  The three figures look the same,  but their amplitude is very different. The maximal value is $150$ on the top left, $15$ on the top right, and $80$ on the bottom. } }\label{Fig7}
\end{figure}
In Figure \ref{Fig8}, we plot 
$$
\sum_{i \in I}  v_i(2,x_1,x_2), \text{ and }\sum_{i \in I}  w_i(2,x_1,x_2). 
$$
We observe numerically the equilibrium formula   \eqref{4.14}. That is, 
$$ 
\sum_{i \in I}  \overline{w}_i(2, x_1,x_2)= \dfrac{\alpha}{\chi}    \sum_{i \in I}  \overline{v}_i (2,x_1,x_2), \forall (x_1,x_2) \in \Omega.
$$ 
\begin{figure}[H]
	\begin{center}
		\begin{minipage}{0.48\textwidth}
			\centering
			\includegraphics[width=\textwidth]{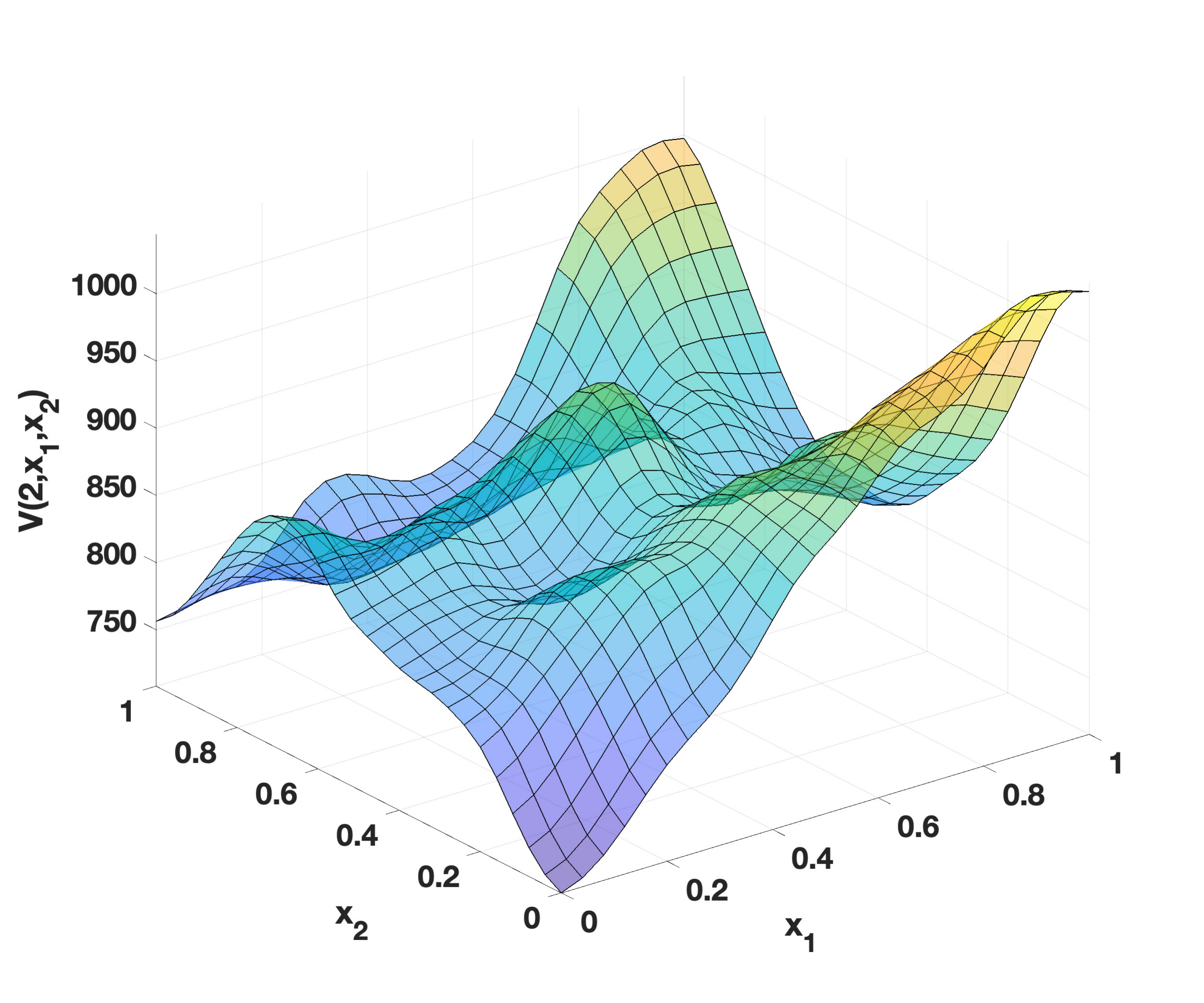}
		\end{minipage}
		\begin{minipage}{0.48\textwidth}
			\centering 
			\includegraphics[width=\textwidth]{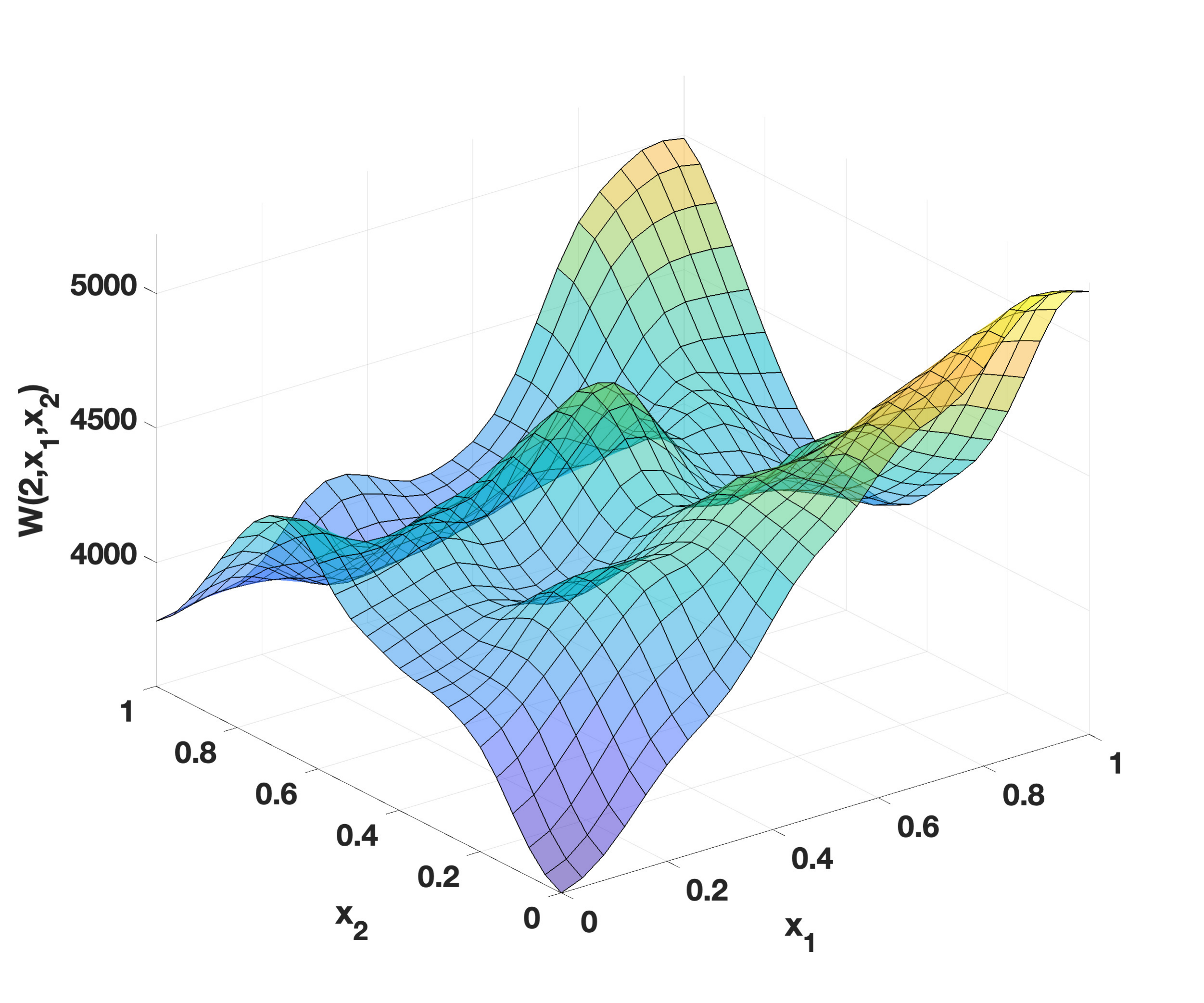}
		\end{minipage}
	\end{center}
	\caption{\textit{In the figure, we plot the distribution of all the travelers  $(x_1,x_2) \mapsto \sum_{i \in I}  v_i(2,x_1,x_2)$ (on the top), and  we plot the distribution of all the workers $(x_1,x_2) \mapsto \sum_{i \in I}  w_i(2,x_1,x_2)$ (on the bottom). Both figures, left and right, look the same, and only the amplitude changes from the left to the right.}}\label{Fig8}
\end{figure}

\section{Conclusion}
\label{Section7}
This article presents a new model, including a compartment for people at home, traveling, and people at work. We study the model's well-posedness and obtain a convergence result to a stationary distribution. The numerical simulations have illustrated such convergence results, and we observed that only one day is necessary for the solutions of the model to converge to the equilibrium distributions. Such a model is essential because significant social differences exist between individuals depending on their home location. Intuitively, the people living in the city's center would travel a short distance to work, while those living in the suburbs would travel a long distance to their working places. 

\medskip 
The model could be complexified in many ways. We could introduce multiple groups to describe the different types of behavior for people at work. For example, some people, like taxi drivers, never stop to travel while they are working.   Conversely, teleworking people stay at home to work but leave their homes to shop. 

\medskip 
We could also consider multiple transport speeds $\mathbf{C_y^k}(x)$ for people leaving their homes at $y \in \R^2$. Different speeds can match different means of transportation, car, bus, subway, etc. Assuming, for example, that $m$-types of transport speed are involve, then for each group  $k=1, \ldots, m$, we would have the following model to describe the travelers 

\begin{equation}  \label{7.1}
	\left\{ 
	\begin{array}{rl}
		\partial_t v_k(t,x)&= \varepsilon^2 \bigtriangleup_x v_k(t,x) - \mathbf{\nabla}_x \cdot  \left(  v_k(t,x) \, \mathbf{C_y^k}(x) 	\right), \vspace{0.2cm} \\
		v_k(0,x)&=v_{0}^k (x) \in \Li^1\left(\R^2\right). 
	\end{array}	
	\right.
\end{equation}

\medskip 
Suppose we consider now an epidemic spreading in a city. In that case, the most critical compartments are those staying at home and work, where most pathogens' transmissions occur. The return-to-home model could compute the distribution of people at work and home depending on their home locations in a given city. Return-to-home models could be used to study various phenomena in the cities. We can extend this model to study air pollution, the spread of epidemics, and other important problems to understand the population dynamics at the level of a single city.  

\medskip 	
Here we use a model to describe travelers' movement, which is relatively simplistic. For example, people travel on the roads, not through the buildings. Another question would be how to include the streets or a map in such a model. 

\medskip 	
To conclude the paper, we should mention that animals also have a home. An important example is the bee, and we refer to \cite{MWW1, MWW2} for more results on this topic. Many species of animals live around their home, so modeling return-to-home is probably essential to understand the dynamics of many leaving populations. 

\medskip 	
This article considers the case where the model's parameters are constant with time. But,  people mostly leave home in the morning, and the parameter $\gamma$ must be larger in the morning than the rest of the day. Similarly, since the people return home late in the afternoon, the parameter $\chi$ must be larger during that period than during the rest day. For each $y \in \R^2$, therefore, the return-to-home model with circadian rhythm (one-day periodic parameters) reads as follows
\begin{equation}\label{7.2}
	\left\{\begin{array}{ll}
		\partial_{t} u_y(t)= \chi(t) \int_{\R^2}w_y(t,x)\dx-\gamma(t) u_y(t),   \vspace{0.2cm} \\
		\partial_{t} v_y(t,x)=\varepsilon^2\Delta_{x}v_y- \mathbf{\nabla}_x \cdot  \left( v_y\, \mathbf{C_y}	 \right)-\alpha v_y+\gamma(t) g(x-y)u_y(t),    \vspace{0.2cm} \\
		\partial_{t} w_y(t,x)=\alpha v_y(t,x)-\chi(t) w_y(t,x),
	\end{array}\right.
\end{equation}
where the function $t \to \gamma(t)$ and $t \to \chi(t)$ are one-day periodic functions. 

\medskip 	
To conclude, we should insist on the fact that in the model, the individuals return home instantaneously. So here, we use diffusion and convection processes to derive the distribution of individuals at work from the distribution of individuals at home. In most practical problems, such as epidemic outbreaks and others, the two distributions will be sufficient to understand the major interactions between individuals. 

\newpage 

\appendix
\begin{center}
	{\LARGE	\textbf{Appendix}}
\end{center}
\section{The return home model on a bounded domain}
\label{SectionA}
We consider the rectangle domain of $\R^2$
$$
\Omega=\left(a_1,b_1 \right) \times \left(a_2,b_2 \right) = \{\left(x_1,x_2 \right) \in \R^2 : a_{1}<x_1<b_{1}, \text{ and } a_{2}<x_2<b_{2}\}.
$$ 
The return home model with no flux  at the boundary (i.e. with Neumann boundary conditions) is the following
\begin{equation}\label{A.1}
	\left\{\begin{array}{l}
		\partial_{t} u(t, y)=\chi \int_{\Omega}w(t,x,y)\dx-\gamma u(t,y),\vspace{0.2cm}  \\
		\partial_{t} v(t,x,y)=\varepsilon^2\Delta_{x}v(t,x,y)-\alpha v(t,x,y)+\gamma \rho(x,y)u(t,y),\vspace{0.2cm}  \\
		\partial_{t} w(t,x,y)=\alpha v(t,x,y)-\chi w(t,y), \vspace{0.2cm} \\
		
	\end{array}\right.
\end{equation}
with $ t\geq 0, x\in\Omega, y\in\Omega$, and in order to preserve the $\Li^1$ norm in space, we impose Neumann boundary conditions. As $\Omega$ is assumed to be a rectangle, that is 
\begin{equation}\label{A.2}
	\left\{\begin{array}{l}
		\partial_{x_1}v(t,x,y)= 0, t\geq0, x_1 =a_1 \text{ or } x_1 =b_1, \vspace{0.2cm} \\
		\partial_{x_2}v(t,x,y)= 0, t\geq0,  x_2 =a_2 \text{ or } x_2 =b_2,
	\end{array}\right.
\end{equation}
the initial distribution at $t=0$ 
\begin{equation} \label{A.3}
	\left\{	\begin{array}{l}
		u(0,y)=u_0(y)\in L_{+}^{1}(\Omega,\R) , \vspace{0.2cm}  \\ 
		v(0,x,y)=v_0(x,y)\in L_{+}^{1}(\Omega\times\Omega,\R), \vspace{0.2cm}  \\
		\text{ and }  \vspace{0.2cm}  \\
		w(0,x,y)=w_0(x,y)\in L_{+}^{1}(\Omega\times\Omega,\R).
	\end{array}\right.
\end{equation}
In order to preserve the total number of individuals, we  defined for  $x=\left(x_1, x_2 \right) \in\Omega$, and $y=\left(y_1, y_2 \right)  \in\Omega$, as follows
$$
\rho(x,y)=\dfrac{g(x-y)}{G(y)},
$$
where $G(y)$ is a normalization constant, which is defined by 
$$
G(y)=\int_{\Omega}g(x-y)\dx, \forall y \in \Omega.
$$ 
\begin{remark} In the formula for $\rho(x,y)$ we divide $g(x-y)$ by $G(y)$, in order to obtain 
	$$
	\int_{\Omega} \rho(x,y)dx=1, \forall y\in \Omega. 
	$$
\end{remark}
In Figure \ref{Fig1} we plot the function $(y_1,y_2) \to G(y_1,y_2)$ and we use the $2$ dimensional Simpson method to compute the integrals. 
\begin{figure}[H]
	\begin{center}
		\includegraphics[scale=0.23]{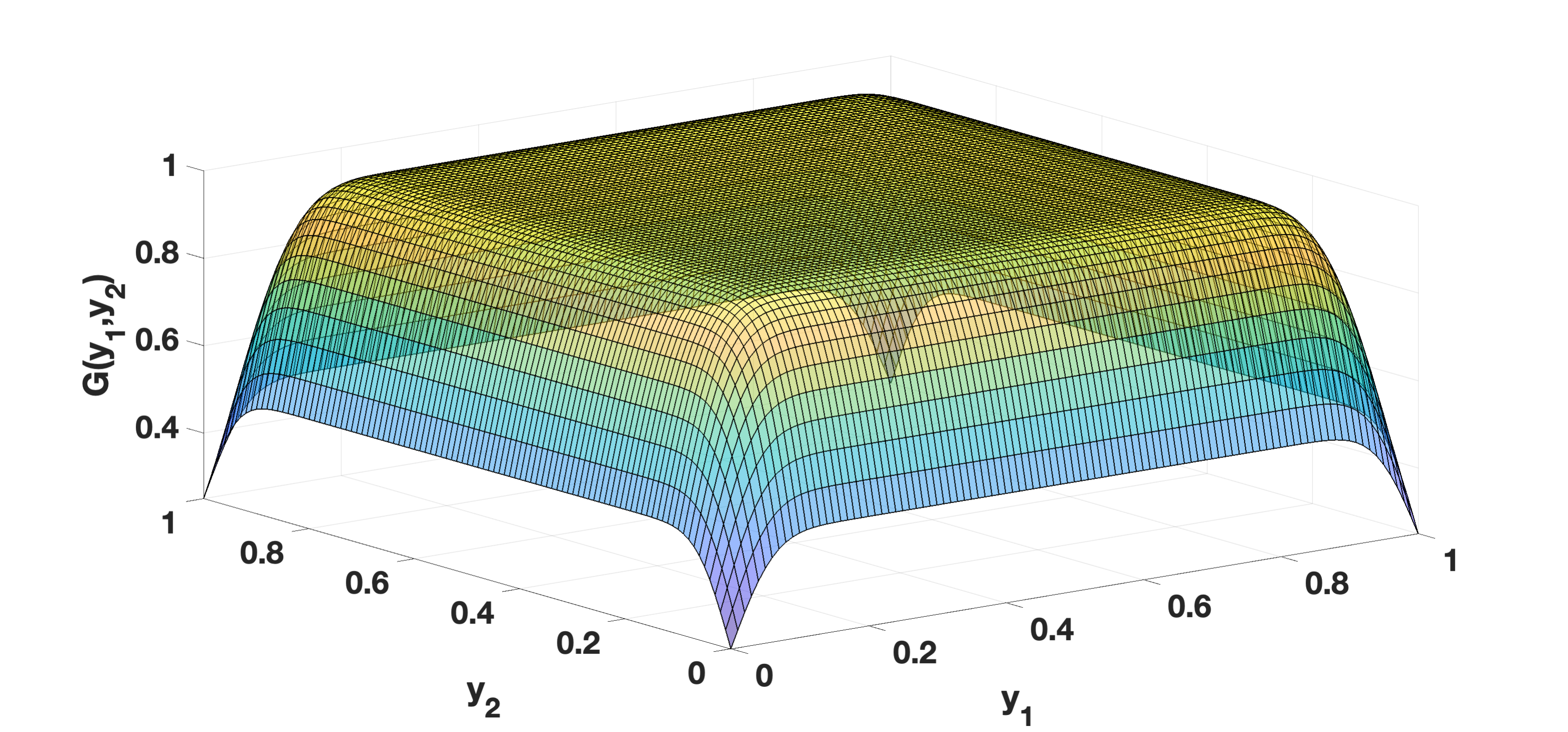}
	\end{center}
	\caption{\textit{In this figure we plot $(y_1,y_2) \to G(y_1,y_2)$. Here we use $\Omega=[0,1]\times [0,1]$ and the Gaussian function $g(x_1,x_2)$ with $\sigma=0.05$. }}\label{Fig0}
\end{figure}

\section{Matrix form of the numerical scheme}
From Appendix A, we know that the unknowns and equations are stored ``naturally'' as components of a vector for the one-dimensional case. However, for the two-dimensional case, we need to deal directly with the components of a matrix. Rearranging the values as a column vector raises the delicate issue of grid point renumbering. 

We define for each  $ i= 1,\cdots,n_1$, $ j= 1,\cdots,n_2$, $k= 1,\cdots,n_1$, and $l= 1,\cdots,n_2,$
and set
$$
m_1= (j - 1)n_1 + i  \in [1, n_1 n_2]\Leftrightarrow i= {\rm mod}(m_1, n_1), \text{ and } j=\dfrac{m_1-i}{n_1}+1, 
$$
and
$$
m_2= (l - 1)n_1 + k \in [1, n_1 n_2] \Leftrightarrow k= {\rm mod}(m_2, n_1), \text{ and } l=\dfrac{m_2-k}{n_1}+1, 
$$
and 
$$
m=(m_1-1) (n_1 n_2)+m_2   \in \left[ 1, \left( n_1 n_2\right)^2 \right] \Leftrightarrow m_2= {\rm mod}(m, n_1 n_2), \text{ and } m_1=\dfrac{m-m_2}{n_1 n_2}+1.
$$
We agree to note the grid points from ``the left to the right'' and from ``the bottom to the top'', i.e.,
according to the increasing order of the $i$, $j$ and $k$, $l$ indices, respectively.  Hence, $m_1$ and $m_2$ are the numbers corresponding to the points $(x_{1i}, x_{2j})$ and $(y_{1k}, y_{2l})$, respectively.

The vector $v$ is then defined by its components 
\begin{equation*}\label{A.1}
	v(m_1)^{n} = v(t^n, x_{1i}, x_{2j}), \quad \forall i= 1,\cdots,n_1, \forall  j= 1,\cdots,n_2. 
\end{equation*}
It follows from Appendix A that the discrete problem can be written in the vector form as follows:
\begin{equation*}\label{A.2}
	\Delta_{x}v(t^n, x_{1i}, x_{2j})=Av(m_1)^n,
\end{equation*}
where $A \in M_{n_1\times n_2}\left( \R \right)$ is the block tridiagonal matrix defined as
\begin{equation*}\label{A.3}
	A=
	\begin{pNiceMatrix}
		\dfrac{B}{\Delta x_1^2}  -\dfrac{I}{\Delta x_2^2}  & \dfrac{I}{\Delta x_2^2}       &0        & \Cdots       &0\\
		\dfrac{I}{\Delta x_2^2}   & 	\dfrac{B}{\Delta x_1^2}  -\dfrac{2 I}{\Delta x_2^2} 	    & \dfrac{I}{\Delta x_2^2}     &  \Ddots       &\Vdots \\
		
		0&\Ddots  &\Ddots  &  \Ddots      &0\\
		\Vdots& \Ddots      &\dfrac{I}{\Delta x_2^2}   & \dfrac{B}{\Delta x_1^2}  -\dfrac{2 I}{\Delta x_2^2} 	  &  \dfrac{I}{\Delta x_2^2}   \\
		0& \Cdots       &  0      & \dfrac{ I}{\Delta x_2^2}   & 	\dfrac{B}{\Delta x_1^2}  -\dfrac{I}{\Delta x_2^2}  
	\end{pNiceMatrix},
\end{equation*}
where $0$ is a $n_1\times n_1$ null matrix and $I$ denotes the $n_1\times n_1$ identity matrix, and 
\begin{equation*}\label{A.4}
	B	=
	\begin{pNiceMatrix}
		-1& 1      &  0      &    & \Cdots        &0\\
		1  & -2     &      & \Ddots   &         &\Vdots\\
		0  &      &      &        &         & \\
		&\Ddots  &\Ddots   &\Ddots    &\Ddots     &0\\
		\Vdots &        &         &       & -2      & 1 \\
		0  & \Cdots &         & 0        & 1       & -1
	\end{pNiceMatrix}  \in M_{n_1} \left( \R \right).
\end{equation*}
The matrix $A$ rewrite as 
\begin{equation*}\label{A.5}
	A=
	\begin{pNiceMatrix}
		\dfrac{B}{\Delta x_1^2} +\dfrac{I}{\Delta x_2^2}  & \dfrac{I}{\Delta x_2^2}       &0        &\Cdots        &0\\
		\dfrac{I}{\Delta x_2^2}   & 	\dfrac{B}{\Delta x_1^2}    &  \dfrac{I}{\Delta x_2^2}     &\Ddots        &\Vdots\\
		0&\Ddots  &\Ddots  &\Ddots  &0\\
		\Vdots& \Ddots       &\dfrac{I}{\Delta x_2^2}   & 	\dfrac{B}{\Delta x_1^2}     &  \dfrac{I}{\Delta x_2^2}   \\
		0&   \Cdots     &    0    & \dfrac{2 I}{\Delta x_2^2}   & 	\dfrac{B}{\Delta x_1^2}  +\dfrac{I}{\Delta x_2^2}
	\end{pNiceMatrix}-\dfrac{2I}{\Delta x_2^2} .
\end{equation*}
Therefore, we deduce that for $m_2$ fixed, we obtain a system of equations when  $m_1$ varies. Define 
$$
V_{m_2}^n=\left[ v(m_1)^{n}_{m_2} \right]_{m_1=1}^{m_1=n_1\times n_2},
$$ 
$$
R_{m_2}=\left[ \rho\left(x_{m_1},y_{m_2} \right)  \right]_{m_1=1}^{m_1=n_1\times n_2}.
$$
Then system \eqref{A.1} can be written as a semi-implicit numerical scheme 
\begin{equation}\label{A.9}
	\left\{
	\begin{array}{rl}
		u_{m_2}^{n+1}=& u_{m_2}^{n}+\Delta t \Delta x_1 \Delta x_2 \displaystyle \chi \,  \sum W_{m_2}^{n}-\Delta t  \,  \gamma \,  u_{m_2}^n, \vspace{0.2cm}\\
		V_{m_2}^{n+1}= &V_{m_2}^{n}+\Delta t  \, \varepsilon		A V_{m_2}^{n+1}-\Delta t  \,  \alpha \, V_{m_2}^{n}+\Delta t  \, \gamma  \,  {\rm diag }\left(R_{m_2} \right) u_{m_2}^n, \vspace{0.2cm}\\
		W_{m_2}^{n+1}=& W_{m_2}^{n}+\Delta t  \,  \alpha \, V_{m_2}^{n}-\Delta t  \, \chi \, W_{m_2}^n.
	\end{array}
 \right.
\end{equation}
The complete problem with convection is more challenging to simulate. Nevertheless, it is possible to use splitting methods in that case. We refer to Speth, Green, MacNamara, and  Strang \cite{Speth} for more on this topic.


\begin{thebibliography}{999}	
	\bibitem{Amann} H. Amann,\textit{ Linear and Quasilinear Parabolic Problems, Volume I: Abstract Linear Theory}. Birkh\"auser, Basel, (1995).
	
	\bibitem{Arendt2020} 	W. Arendt, \& J. Glück,  Positive irreducible semigroups and their long-time behaviour. \textit{Philosophical Transactions of the Royal Society A}, \textbf{378(2185)} (2020), 20190611.
	
	\bibitem{Arendt}     W. Arendt , A. Grabosch , G. Greiner , U. Moustakas , R. Nagel , U. Schlotterbeck , U. Groh , H. P. Lotz , F. Neubrander,  \textit{One-parameter semigroups of positive operators}, Lecture Notes in Mathematics, volume 1184, Springer-Verlag (1986).
	
	\bibitem{Arino}	O. Arino, A survey of structured cell populations. \textit{Acta Biotheoretica} \textbf{43} (1995), 3-25.
	
	\bibitem{BY} 	H. Bagan, and Y. Yamagata,  Landsat analysis of urban growth: How Tokyo became the world's largest megacity during the last 40 years. \textit{Remote sensing of Environment}, \textbf{127} (2012), 210-222.
	
	\bibitem{BHG} 	D. Brockmann, L. Hufnagel, and T. Geisel,   The scaling laws of human travel. \textit{Nature}, \textbf{439(7075)} (2006), 462-465.
	
	\bibitem{Cantrell} R.S. Cantrell and C. Cosner, \textit{Spatial ecology via reaction-diffusion equations}. John Wiley \& Sons (2004). 
	
	\bibitem{Cantrell-Cosner-Ruan} Cantrell, R. S., Cosner, C., \& Ruan, S. (Eds.) \textit{Spatial ecology}. CRC Press (2010). 
	
	\bibitem{CBCI} C. Cosner, J. C. Beier, R.S.Cantrell, D. Impoinvil, L. Kapitanski, M. D. Potts, A.Troyo, and S. Ruan, The effects of human movement on the persistence of vector borne diseases, \textit{Journal of Theoretical Biology}, \textbf{258} (2009), 550-560. 
	
	\bibitem{CPB}  S. Charaudeau, K. Pakdaman, and P. Y. Boëlle, Commuter mobility and the spread of infectious diseases: application to influenza in France. \textit{PloS one}, \textbf{9(1)}, e83002 (2014).
	
	\bibitem{DGLM-2022}  A. Ducrot, Q. Griette, Z. Liu,  \& P. Magal, \textit{Differential Equations and Population Dynamics I: Introductory Approaches.} Springer Nature (2022).
	
	
	\bibitem{DM-2022} A. Ducrot, and P. Magal, Return-to-home model for short-range human travel, \textit{Mathematical Biosciences and Engineering},\textbf{ 19(8)} (2022), 7737-7755. 
	
	\bibitem{DPM} A. Ducrot, P. Magal and K. Prevost, Integrated Semigroups and Parabolic Equations. Part I: Linear Perburbation of Almost Sectorial Operators. \textit{Journal of Evolution Equations}, \textbf{10} (2010), 263-291. 	
	
	
	
	\bibitem{Engel-Nagel}  K.-J. Engel and R. Nagel, \textit{One Parameter Semigroups for Linear Evolution Equations}, Springer-Verlag, New York, (2000).
	
	
	
	\bibitem{F}  R.A. Fisher,   The wave of advance of advantageous genes. \textit{Annals of Eugenics} \textbf{7(4)} (1937), 355-369.
	
	\bibitem{S3}  A. Friedmann, \textit{Partial Differential Equations}, Holt, Rinehartand Winston, (1969).
	
	
	\bibitem{Gilbarg-Trudinger}  D. Gilbarg, and N.S. Trudinger, \textit{Elliptic partial differential equations of second order}, Springer-Verlag (1977).
	
	
	\bibitem{Grabosch} 	A. Grabosch,  Compactness properties and asymptotics of strongly coupled systems. \textit{J. Math. Anal. Appl.}, \textbf{187} (1994), 411-437. 
	
	
	\bibitem{GHB} 	M. C.  Gonzalez, C.A. Hidalgo, and A.L. Barabasi,  Understanding individual human mobility patterns. \textit{Nature},  \textbf{453(7196)} (2008), 779-782.
	
	\bibitem{Greiner} 	G. Greiner, A typical Perron-Frobenius theorem with applications to an age-dependent population equation. In \textit{Infinite-Dimensional Systems: Proceedings of the Conference on Operator Semigroups and Applications} held in Retzhof (Styria), Austria, June 5-11, 1983 (pp. 86-100). Berlin, Heidelberg: Springer Berlin Heidelberg (1984).
	
	\bibitem{Haase} M.  Haase. \textit{The functional calculus for sectorial operators}. Birkhäuser Basel, (2006).
	
	
	\bibitem{S4}  D. Henry, \textit{Geometric theory of semilinear parabolic equations}, Lecture Notes
	in Mathematics, vol. 840 Springer-Verlag (1981). 
	
	\bibitem{Ignatova-Kukavica-Ryzhik}	M. Ignatova, I. Kukavica, \& L. Ryzhik,  The Harnack inequality for second-order parabolic equations with divergence-free drifts of low regularity. \textit{Communications in Partial Differential Equations}, \textbf{41(2)} (2016), 208-226.
	
	\bibitem{KSZ}  J. Klafter, M.F.  Shlesinger, and G. Zumofen,  Beyond brownian motion. \textit{Physics today}, \textbf{49(2)} (1996), 33-39.
	
	\bibitem{KPP}	 A.N. Kolmogorov,  I.G. Petrovski,  N.S.  Piskunov,	  \'Etude de l'\'equation de	la diffusion avec croissance de la quantit\'e de mati\`ere et son application \`a un probl\`eme biologique. \textit{Bull. Univ. Moskow, Ser. Internat.}, Sec. A 1 (1937), 1-25.
	
	\bibitem{S5}  A. Lunardi, \textit{Analytic semigroups and optimal regularity in parabolic problems}, Birkhauser, Basel, (1995).
	
	\bibitem{Magal-Ruan}  P. Magal and S. Ruan, \textit{Theory and Applications of Abstract Semilinear 	Cauchy Problems}, Vol. 201. Springer-Verlag (2018). 
	
	\bibitem{Magal-Thieme} 	P. Magal, and H.R. Thieme, Eventual compactness for a semiflow generated by an age-structured models, \textit{Communications on Pure and Applied Analysis}, \textbf{3} (2004), 695-727. 
	
	
	\bibitem{MWW1}  	P. Magal, G. F. Webb, and Y. Wu,  An Environmental Model of Honey Bee Colony Collapse Due to Pesticide Contamination, \textit{Bulletin of Mathematical Biology}, \textbf{81} (2019), 4908-4931.
	
	\bibitem{MWW2}  	P. Magal, G. F. Webb, and Y. Wu, A Spatial Model of Honey Bee Colony Collapse Due to Pesticide Contamination of Foraging Bees, \textit{Journal of Mathematical Biology} \textbf{80} (2020), 2363-2393.
	
	\bibitem{MS}  R. N.   Mantegna, and H. E. Stanley, Stochastic process with ultraslow convergence to a Gaussian: the truncated Lévy flight. \textit{Physical Review Letters}, 73(22)  (1994), 2946.
	
	
	\bibitem{Murray} J.D. Murray, \textit{Mathematical Biology II: spatial models and biomedical applications} (Vol. 3). New York: Springer  (2001).
	
	
	
	\bibitem{S7}  A. Pazy, \textit{Semigroups of operator and application to partial differential equation}, Springer-Verlag, Berlin, (1983).

	
	\bibitem{Perthame} B. Perthame,\textit{ Parabolic Equations in Biology}, Springer (2015).
	
	\bibitem{Protter-Weinberger} M. H. Protter, and H. F. Weinberger, \textit{Maximum principles in differential equations}. Springer Science \& Business Media (2012).
	
	\bibitem{Pumain} D.	Pumain, T. Saint-Julien, \textit{L'analyse Spatiale, La localisation dans l'espace}, Paris (1997). 
	
	\bibitem{Speth} 	 R. L. Speth, W.H. Green, S. MacNamara, \& G. Strang, Balanced splitting and rebalanced splitting. \textit{SIAM Journal on Numerical Analysis}, \textbf{51(6)} (2013), 3084-3105.
	
	\bibitem{S8}  H. Tanabe, \textit{Equations of Evolution}, Pitman (1979).
	
	
	\bibitem{S9}  R. Temam, \textit{Infinite Dimensional Dynamical Systems in Mechanics and Physics}, Springer-Verlag, New York (1988).
	
	\bibitem{Roques} L. Roques, \textit{Modèles de réaction-diffusion pour l'écologie spatiale: Avec exercices dirigés}. Editions Quae (2013). 
	
	\bibitem{Ruan} S. Ruan, Spatial-Temporal Dynamics in Nonlocal Epidemiological Models, in ``Mathematics for Life Science and Medicine'', Y. Takeuchi, K. Sato and Y. Iwasa (eds.), Springer-Verlag, Berlin (2007), 97-122 . 
	
	\bibitem{Ruan2017} S. Ruan, Spatiotemporal epidemic models for rabies among animals. \textit{Infectious Disease Modeling}, \textbf{2(3)} (2017), 277-287. 
	
	
	\bibitem{S10}  A. Yagi, \textit{Abstract Parabolic Evolution Equations and their Applications}, Springer Monographs in Mathematics, Springer-Verlag, Berlin, (2010).
	
	
	\bibitem{Webb79} G.F. Webb, Compactness of Bounded Trajectories of Dynamical Systems in Infinite Dimensional Spaces, \textit{Proc. Roy. Soc. Edinburgh.} \textbf{84A} (1979), 19-33.
	
	
	\bibitem{Webb87}	G.F. Webb, An operator-theoretic exponential growth in differential equations. \textit{Trans. AMS}, \textbf{303} (1987), 751-763.
	
	
	
	
	%
	
\end{thebibliography}
\end{document}